\author{Vincenzo Marra and Luca Reggio}
\thanks{The second author acknowledges financial support from Sorbonne Paris Cit\'e (PhD agreement USPC IDEX -- REGGI15RDXMTSPC1GEHRKE), and from the European
    Research Council (ERC) under the European Union's Horizon 2020
    research and innovation programme (grant agreement No.670624).}
\address{Dipartimento di Matematica ``Federigo Enriques'', Universit\`a degli Studi di Milano\\
 via Cesare Saldini 50, 20133 Milano, Italy\\[5pt]
 Department of Computer Science, University of Oxford\\
15 Parks Road, Oxford OX1 3QD, UK\\
}
\title{A characterisation of the category of compact Hausdorff spaces}
\keywords{compact Hausdorff spaces, coherent category, pretopos, filtrality, Stone spaces, exact completion}
\newtheorem{claim}{Claim} 
\renewcommand{\P}{\raisebox{.17\baselineskip}{\Large\ensuremath{\wp}}} 
\newcommand{\op}{\mathrm{op}} 
\DeclareMathOperator{\Vs}{\mathbb{V}} 
\DeclareMathOperator{\Cg}{\mathbb{I}} 
\DeclareMathOperator{\F}{\mathcal{F}} 
\DeclareMathOperator{\B}{\mathcal{B}} 
\DeclareMathOperator{\Ct}{\mathcal{C}} 
\DeclareMathOperator{\pt}{pt} 
\DeclareMathOperator{\Sub}{Sub} 
\DeclareMathOperator{\Spec}{Spec} 
\DeclareMathOperator{\Hom}{Hom} 
\DeclareMathOperator{\cl}{\mathfrak{c}} 
\DeclareMathOperator{\K}{\ensuremath{\mathcal{K}}} 
\newcommand{\mono}{\hookrightarrow} 
\newcommand{\epi}{\twoheadrightarrow} 
\DeclareMathOperator{\0}{\mathbf{0}} 
\DeclareMathOperator{\1}{\mathbf{1}} 
\DeclareMathOperator{\2}{\mathbf{2}}
\renewcommand{\epsilon}{\varepsilon}
\renewcommand{\theta}{\vartheta}
\renewcommand{\phi}{\varphi}
\renewcommand{\restriction}{\mathord{\upharpoonright}}
\DeclareMathOperator{\Set}{\mathbf{Set}} 
\DeclareMathOperator{\Setfin}{\mathbf{Set}_\mathit{f}} 
\DeclareMathOperator{\KH}{\mathbf{KH}} 
\DeclareMathOperator{\BA}{\mathbf{BA}} 
\DeclareMathOperator{\BStone}{\mathbf{Stone}} 
\DeclareMathOperator{\Top}{\mathbf{Top}} 
\DeclareMathOperator{\DL}{\mathbf{DL}} 
\DeclareMathOperator{\SL}{\mathbf{SL}} 
\DeclareMathOperator{\X}{\mathbf{X}} 
\DeclareMathOperator{\C}{\mathbf{C}}
\DeclareMathOperator{\D}{\mathbf{D}}
\DeclareMathOperator{\dec}{Dec} 
\DeclareMathOperator{\prodec}{proDec} 
\begin{document}

\maketitle
\begin{abstract}
We provide a characterisation of the category $\KH$ of compact Hausdorff spaces and continuous maps by means of categorical properties only. 	
To this aim we introduce a notion of \emph{filtrality} for coherent categories, relating certain lattices of subobjects to their Boolean centers.
Our main result reads as follows: \emph{Up to equivalence, $\KH$ is the unique non-trivial well-pointed pretopos which is filtral and admits all set-indexed copowers of its terminal object}.
\end{abstract}

\section{Introduction}\label{s:introduction}
Several characterisations of the class of compact Hausdorff spaces are available in the literature. For example, de Groot's Theorem asserts that compact Hausdorff spaces form the only non-trivial, productive and closed-hereditary class of topological spaces which are absolutely closed and preserved under closed images. See~\cite[p.~51]{Wattel1968}, and~\cite{FT1970} for a categorical translation of this result. The category $\KH$ of compact Hausdorff spaces and continuous maps has been widely investigated in categorical topology. There, the characterisation of subcategories of the category of topological spaces is an important concern. In this direction, Herrlich and Strecker showed that $\KH$ is the unique non-trivial full epireflective subcategory of Hausdorff spaces that is \emph{varietal} in the sense of~\cite{Linton66}. See~\cite{HS1971}, and also~\cite{Richter1996}. 

A common feature of these characterisations is that they are all relative to an ambient class of topological spaces. 
To the best of our knowledge, the only abstract characterisations of the category of compact Hausdorff spaces were provided in~\cite{Richter1991, Richter1992}. 
For a discussion of these results, please see the end of this introduction.

Our main result, Theorem~\ref{t:main-characterisation-bis}, offers a new characterisation of the category of compact Hausdorff spaces. 
We prove that, up to equivalence, $\KH$ is the unique non-trivial well-pointed (and well-powered) pretopos which admits all set-indexed copowers of its terminal object, and satisfies a condition that we call \emph{filtrality}. This latter notion makes sense in any coherent category, and appears to be new with our paper. Filtrality asserts that every object is covered by one whose lattice of subobjects is isomorphic (in a canonical way) to the lattice of filters of its Boolean center. It may be understood as stating the existence of enough objects satisfying a form of compactness, Hausdorffness and zero-dimensionality.  The second prominent ingredient of our characterisation is the pretopos structure. A \emph{pretopos} is an exact and extensive category. While exactness is a distinguishing property of categories of algebras, extensivity abstracts a property typical of categories of spaces. In this sense, Theorem~\ref{t:main-characterisation-bis} hinges on the fact that $\KH$ has both a spatial  and an algebraic nature. We briefly comment on these two aspects.

The evident spatial nature of $\KH$ has proved fruitful from a duality-theoretic viewpoint.
Several dualities for compact Hausdorff spaces were discovered in the first half of the last century. See, e.g.,~\cite{Kakutani41, KreinKrein40, Yosida41}. The best-known result probably is \emph{Gelfand-Naimark duality} between compact Hausdorff spaces and commutative unital $\mathrm{C}^*$-algebras~\cite{GN1943}.
The concept of norm, central in the definition of $\mathrm{C}^*$-algebra, is not algebraic in nature. However, Duskin showed in 1969 that the dual of the category of compact Hausdorff spaces is monadic over the category $\Set$ of sets and functions~\cite[{}5.15.3]{Duskin1969}. In fact, $\KH^{\op}$ is equivalent to a variety of algebras. Although operations of finite arity do not suffice to describe any such variety, Isbell proved that finitely many finitary operations, along with a single operation of countably infinite arity, are sufficient~\cite{Isbell1982}. In~\cite{MR2017} we provided a finite axiomatisation of a variety of algebras dually equivalent to $\KH$. For more on the axiomatisability of the dual of compact Hausdorff spaces, we refer the reader to~\cite{Banaschewski1984, Rosicky1989}.

On the other hand, the category $\KH$ itself also has an algebraic nature. This was first pointed out by Linton, who proved that the category of compact Hausdorff spaces is varietal, hence monadic over $\Set$~\cite[Section 5]{Linton66}. An explicit description of the corresponding equational theory was later given by Manes, see~\cite[Section 1.5]{Manes1976} for a detailed exposition, who showed that compact Hausdorff spaces are precisely the algebras for the ultrafilter monad on $\Set$.
This algebraic nature appears to be one of the distinctive features of the category of compact Hausdorff spaces among the categories of topological spaces, and was exploited by Herrlich and Strecker, and by Richter, to obtain the aforementioned characterisations of $\KH$.

Our characterisation of the category of compact Hausdorff spaces can be compared to Lawvere's \emph{Elementary Theory of the Category of Sets} (ETCS), outlined in~\cite{Lawvere1964}. For a more detailed exposition, see~\cite{Lawvere2005}.
Lawvere gives eight elementary axioms (in the language of categories) such that every complete and cocomplete category satisfying these axioms is equivalent to $\Set$. Some of his axioms appear \emph{verbatim} in our characterisation, e.g., the existence of enough points (\emph{elements}, in Lawvere's terminology). 
Where Lawvere's characterisation and ours diverge is about the existence of infinite ``discrete'' objects. While the second and third axioms of ETCS jointly imply the existence of a natural numbers object satisfying Primitive Recursion~\cite[Theorem 1]{Lawvere2005}, we prescribe filtrality which forbids the existence of infinite discrete objects.\footnote{Note that $\KH$ does admit a natural numbers object, namely the Stone-\v{C}ech compactification $\beta(\mathbb{N})$. However, this natural numbers object is not stable under taking slice categories, thus preventing Primitive Recursion from holding in $\KH$. For more details, cf.~\cite[Section~A2.5]{Elephant1}.} In a sense, when compared to Lawvere's, our characterisation clarifies to what extent the categories $\Set$ and $\KH$ are similar, and where they differ. 
Let us mention that Schlomiuk adapted Lawvere's ETCS to characterise the category of topological spaces~\cite{Schlomiuk1970}. However, her result does not bear a greater resemblance to ours than Lawvere's.

\medskip
The paper is structured as follows. In Section~\ref{s:coherent} we provide some background on coherent categories, which capture a part of the structure of pretopos. It turns out that this structure suffices for a large part of the construction leading to the main result. 
In Section~\ref{s:top-representation}, we study the functor assigning to each object of a well-pointed coherent category $\X$ its set of points, {\it alias} global elements. We focus on the situation where this functor admits a lifting to the category of topological spaces, yielding a topological representation of $\X$. 
The notion of filtrality is introduced in Section~\ref{s:filtrality}. We use it to prove that the topological representation of $\X$ lands in the category of compact Hausdorff spaces.
The full pretopos structure on $\X$ is considered in Section~\ref{s:main} to prove our main result, Theorem~\ref{t:main-characterisation-bis}, providing a characterisation of $\KH$. 
The last two sections of the paper are offered by way of an addendum. In Section~\ref{s:decidable}, we characterise the category of \emph{Stone spaces}---consisting of zero-dimensional compact Hausdorff spaces and continuous maps---in the spirit of Theorem~\ref{t:main-characterisation-bis}. Finally, in Section~\ref{s:epilogue}, we exploit our characterisation of $\KH$ to give a proof of the folklore result stating that the exact (equivalently, pretopos) completion of the category of Stone spaces is $\KH$.

\medskip
We end this introduction with a discussion of Richter's results in~\cite{Richter1991, Richter1992} that are relevant to our paper. Of the characterisations of $\KH$ in~\cite[Corollary 4.7]{Richter1991} and~\cite[Remark 4.7]{Richter1992}, we shall only consider the latter, which we regard as an improvement of the former.
In~\cite{Richter1992}, the author derives Remark~4.7 from Theorem~2.2, a more general result characterising the full subcategories of $\KH$ which contain all the Stone-\v{C}ech compactifications of discrete spaces. Unlike Richter's, our main result is based on the notion of coherent category. Indeed, as noted above, several of our intermediate results and constructions make sense in that context. We expect this fact to play a role in future research addressing, for instance,  categories of non-Hausdorff or ordered topological spaces. More generally, a number of Richter's axioms are, \textit{prima facie}, quite different from ours. To provide some mathematical substance to these heuristic comments, in Theorem~\ref{t:Richter} we offer a proof of Richter's result from our Theorem~\ref{t:main-characterisation-bis}. The main point is to show how Richter's axioms entail the pretopos structure. Since Richter assumes his category to have effective equivalence relations, the crux of the matter is to deduce regularity and extensivity from Richter's axioms. For details, please see Section~\ref{s:Richter-thm}.

\begin{notation}
Assuming they exist, the initial and terminal objects of a category are denoted  $\0$ and $\1$, respectively. The unique morphism from an object $X$ to $\1$ is $!\colon X\to\1$. The coproduct of two objects $X_1,X_2$ is written $X_1+X_2$. For arbitrary (set-indexed) coproducts we use the notation $\sum_{i\in I}{X_i}$. A monomorphism (resp.\ regular epimorphism) from an object $X$ to an object $Y$ is denoted  $X\mono Y$ (resp.\ $X\epi Y$). If $S$ is a set, $\P{(S)}$ is its power-set.
\end{notation}

\section{Coherent categories}\label{s:coherent}
We recall some basic facts about coherent categories that will be used in the remainder of the paper. For a more thorough treatment, the reader can consult~\cite[Sections A1.3, A1.4]{Elephant1} or~\cite[Chapter 3]{MR77}. 

Given an object $X$ of a category $\C$, the collection of all monomorphisms with codomain $X$ admits a pre-order $\leq$ defined as follows. For any two monomorphisms $m_1\colon S_1\mono X$ and $m_2\colon S_2\mono X$, set $m_1\leq m_2$ if, and only if, there exists a morphism $S_1\to S_2$ in $\C$ such that the following diagram commutes.
\[\begin{tikzcd}
S_1 \arrow[dashed]{d} \arrow[hookrightarrow]{r}{m_1} & X \\
S_2 \arrow[hookrightarrow]{ur}[swap]{m_2} &
\end{tikzcd}\]
We can canonically associate with this pre-order an equivalence relation $\sim$, by setting $m_1\sim m_2$ if, and only if, $m_1\leq m_2$ and $m_2\leq m_1$. Note that $m_1\sim m_2$ precisely when there is an isomorphism $f\colon S_1\to S_2$ satisfying $m_1=f\circ m_2$. A ${\sim}$-equivalence class of monomorphisms with codomain $X$ is called a \emph{subobject} of $X$, and the collection of all subobjects of $X$ is denoted by $\Sub{X}$. The pre-order $\leq$ induces a partial order on $\Sub{X}$, that we denote again by $\leq$. When no confusion arises, we abuse notation and denote a subobject of $X$ by the domain of one of its representatives.
\begin{assumption}
A priori, an object can have a proper class of subobjects, as opposed to a set. For simplicity, throughout this paper we shall assume that all categories under consideration are \emph{well-powered}, i.e.\ $\Sub{X}$ is a set for every object $X$ in the category.
\end{assumption}
If the category $\C$ has pullbacks, then each poset of subobjects in $\C$ is a $\wedge$-semilattice. Just observe that, for any object $X$ of $\C$, the infimum in $\Sub{X}$ of two subobjects $m_1\colon S_1\mono X$ and $m_2\colon S_2\mono X$ is given by the pullback of $m_1$ along $m_2$ (recall that in any category the pullback of a mono along any morphism, if it exists, is a mono). The top element of $\Sub{X}$ is the identity $X\to X$. Moreover, for any morphism $f\colon X\to Y$ in $\C$, the \emph{pullback functor}
\[f^*\colon \Sub{Y}\to \Sub{X}\]
sending a subobject $m\colon S\mono Y$ to its pullback along $f$ is a $\wedge$-semilattice homomorphism preserving the top element. 
Thus, whenever $\C$ is a category with pullbacks, there is a well-defined functor
\begin{equation}\label{eq:semilattice-hyperdoctrine}
\Sub\colon \C^{\op}\to \SL
\end{equation}
into the category $\SL$ of $\wedge$-semilattices with top elements and semilattice homomorphisms preserving the top elements, which sends a morphism $f$ in $\C$ to $f^*$. Next, we look at the case where the pullback functors are upper (or right) adjoint.

The \emph{image} of a morphism $f\colon X\to Y$ in $\C$, if it exists, is the unique subobject $m\colon S\mono Y$ such that:
\begin{itemize}
\item $f$ factors through $m$;
\item for any subobject $m'\colon S'\mono Y$ through which $f$ factors, $m$ factors through $m'$.
\end{itemize}
That is, the image of $f$ is the smallest subobject of $Y$, in the partial order $\leq$ of $\Sub{Y}$, through which $f$ factors. Henceforth, we denote by $\exists_f(X)$ the image of $f$. In particular, the morphism $f$ factors as 
\begin{equation}\label{eq:factor-image}
X\to\exists_f(X)\mono Y.
\end{equation}
Moreover, there is an order-preserving function
\[
\exists_f\colon \Sub{X}\to \Sub{Y}
\]
sending a subobject $m\colon S\mono X$ to the image of the composition $f\circ m\colon S\to Y$. It is not difficult to see that this function is lower (or left) adjoint to the pullback functor $f^*$. In symbols,
\[
\exists_f\dashv f^*.
\]
We say that the image $\exists_f(X)$ is \emph{pullback-stable} if, for any morphism $g\colon Z\to Y$, taking the pullback of diagram~\eqref{eq:factor-image} along $g$ yields the image-factorisation of the pullback of $f$ along $g$.
\begin{definition}\label{d:regular-category}
A \emph{regular category} is a category with finite limits in which every morphism has a pullback-stable image. 
\end{definition}
\begin{example}
We give some examples of categories that are, or are not, regular.
\begin{itemize}
\item Any $\wedge$-semilattice with top element, regarded as a category, is regular.
\item Every variety of (Birkhoff) algebras is a regular category, with morphisms all the homomorphisms. Images are the homomorphic images. In particular, the category $\Set$ of sets and functions is regular.
\item The category $\KH$ of compact Hausdorff spaces and continuous maps, and its full subcategory $\BStone$ on the Stone spaces (i.e., zero-dimensional compact Hausdorff spaces),  are regular. Finite limits and images are liftings of those in $\Set$. In particular, images are simply continuous images and they are stable under pullbacks. 
\item The category $\Top$ of topological spaces and continuous maps is not regular because images are given by regular epis, which are not stable under pullbacks. See, e.g.,~\cite[p.\ 180]{PT2004}.
\end{itemize}
\end{example}
Every regular category admits a (regular epi, mono) factorisation system which is stable under pullbacks. This is given by taking the factorisation of a morphism through its image.
If, in addition to the requirements for a regular category, we ask that finite joins of subobjects exist and are preserved by the pullback functors, we arrive at the notion of coherent category.
\begin{definition}
A \emph{coherent category} is a regular category in which every poset of subobjects has finite joins and, for every morphism $f\colon X\to Y$, the pullback functor \[f^*\colon \Sub{Y}\to \Sub{X}\] preserves them. 
\end{definition}
\goodbreak
\begin{example}\label{ex:coherent-categories}
We give some examples of categories that are, or are not, coherent.
\begin{itemize}
\item Any bounded distributive lattice, regarded as a category, is coherent.
\item The categories $\KH$ and $\BStone$ are coherent. The join of two subspaces is simply their (set-theoretic) union, and is stable under pullbacks. 
\item $\Top$ is not regular and, a fortiori, not coherent.
\item Not every variety of (Birkhoff) algebras forms a coherent category. For instance, it will follow from Lemma~\ref{l:posets-coherent-distributive-lattice} below that the category of groups and group homomorphisms is not coherent, because the lattice of all subgroups of a given group is not distributive, in general. 
\end{itemize}
\end{example}
Every coherent category admits an initial object $\0$ which is \emph{strict}, i.e.\ every morphism $X\to \0$ is an isomorphism. This is the content of the next lemma.
For a proof see, e.g.,~\cite[A.1.4]{Elephant1}.
\begin{lemma}\label{l:cohe-strict-initial}
Every coherent category has a strict initial object $\0$. It can be defined as the least element of $\Sub{\1}$, where $\1$ is the terminal object.
\end{lemma}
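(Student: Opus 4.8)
The plan is to take $\0$ to be the least element of $\Sub\1$ --- which exists because, in a coherent category, every poset of subobjects has all finite joins and the bottom is the empty join --- and then to verify in turn that $\0$ is strict and initial. Write $m\colon\0\mono\1$ for a representing monomorphism. I would first record two preliminary facts. (Fact A) The poset $\Sub\0$ is trivial: if $S\mono\0$ is any subobject, then composing with $m$ exhibits $S$ as a subobject of $\1$ lying below $\0$, so by minimality of $\0$ in $\Sub\1$ we get $S\cong\0$; thus $\0$ is simultaneously the top and the bottom of $\Sub\0$. (Fact B) Any object $C$ that admits a morphism $g\colon C\to\0$ has trivial subobject lattice. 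Indeed, the bottom of $\Sub C$ is $!^*(\0)$ since pullback functors preserve the empty join; as $!=m\circ g$ we have $!^*(\0)=g^*(m^*(\0))$, and $m^*(\0)$ is the bottom of $\Sub\0$, which by Fact A coincides with its top. Since pullback functors preserve the top as well, $g^*$ then carries it to the top of $\Sub C$, so the bottom of $\Sub C$ equals its top and $\Sub C$ is trivial.

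The heart of the argument is \emph{strictness}: I claim every morphism $f\colon C\to\0$ is an isomorphism. By Fact B the lattices $\Sub C$ and $\Sub(C\times C)$ are trivial, the latter because $C\times C\to C\xrightarrow{f}\0$. On the one hand, the image of $f$ is a subobject of $\0$, hence equal to $\0$ by Fact A, so the (regular epi, mono) factorisation available in any regular category shows that $f$ is a regular epimorphism. On the other hand, $f$ is a monomorphism: its kernel pair $C\times_{\0}C$ is a subobject of $C\times C$, hence equals $C\times C$ since $\Sub(C\times C)$ is trivial; but the diagonal $\Delta\colon C\to C\times C$ is a monomorphism into an object with trivial subobject lattice, so $\Delta$ is itself an isomorphism, forcing the two projections of the kernel pair to coincide. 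A morphism that is simultaneously a regular epimorphism and a monomorphism is an isomorphism, so $f$ is invertible.

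With strictness in hand the rest is short. For the existence of a morphism $\0\to A$, set $\0_A\mathrel{:=}{}!^*(\0)\mono A$, the bottom of $\Sub A$; the defining pullback square supplies a comparison morphism $\0_A\to\0$, which is an isomorphism by strictness, and composing its inverse with $\0_A\mono A$ yields the desired map. For uniqueness, given $g,h\colon\0\to A$ their equaliser is a subobject of $\0$, hence all of $\0$ by Fact A, so $g=h$. This proves $\0$ is initial, and it is strict by the claim above. The main obstacle I anticipate is the monomorphism half of strictness: the remaining steps are direct exploitations of minimality, but showing that the kernel pair collapses to the diagonal genuinely uses the propagation of triviality of subobject lattices (Fact B) together with the interplay between image factorisation and the preservation of joins and top elements by the pullback functors.
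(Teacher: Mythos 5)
Your proof is correct; the paper itself gives no argument for this lemma but defers to \cite[A.1.4]{Elephant1}, and your proof is essentially the standard one found there: triviality of $\Sub{\0}$, propagation of triviality along any morphism into $\0$ via the pullback functors' preservation of top and bottom, and collapse of the diagonal to force strictness. The only stylistic difference is that Johnstone finishes strictness by observing that the invertible diagonal makes $C$ subterminal, so that $C\leq\0$ in $\Sub{\1}$ forces $C\cong\0$, whereas you show $C\to\0$ is simultaneously a regular epi and a mono --- both routes are sound.
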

By definition, in a coherent category $\C$ the posets of subobjects are bounded lattices and the pullback functors are lattice homomorphisms preserving the top elements. 
For every object $X$ of $\C$, the bottom element of $\Sub{X}$ is the unique morphism $\0\to X$. It follows at once from the previous lemma that the pullback functors preserve also bottom elements.
The lattice $\Sub{X}$ has a further important property, namely it is distributive. This follows essentially from the fact that the maps $S\wedge -\colon \Sub{X}\to\Sub{S}$, for any subobject $m\colon S\mono X$, coincide with the pullback functors $m^*$ and thus preserve finite suprema. We record this fact for future reference. For a detailed proof see, e.g.,~\cite[Lemma A.1.4.2]{Elephant1}.
\begin{lemma}\label{l:posets-coherent-distributive-lattice}
For every object $X$ of a coherent category $\C$, the lattice of subobjects $\Sub{X}$ is distributive.
\end{lemma}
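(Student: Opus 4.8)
The plan is to prove the single distributive law $A \wedge (B \vee C) = (A \wedge B) \vee (A \wedge C)$ for arbitrary subobjects $A, B, C$ of $X$; the dual law $A \vee (B \wedge C) = (A \vee B) \wedge (A \vee C)$ then follows automatically, since the two forms of distributivity are equivalent in any lattice. Note that the inequality $(A \wedge B) \vee (A \wedge C) \leq A \wedge (B \vee C)$ holds in every lattice, so the real content lies in the reverse inequality; the argument I have in mind, however, yields the equality in one stroke.

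Fix a representative monomorphism $a \colon A \mono X$ of the subobject $A$. The two tools I would use are already recorded above: the adjunction $\exists_a \dashv a^*$ between $\Sub{A}$ and $\Sub{X}$, and the defining property of a coherent category that the pullback functor $a^*$ preserves finite joins. To these I add the standard fact that a left adjoint preserves all suprema that exist, so $\exists_a \colon \Sub{A} \to \Sub{X}$ preserves the finite joins of $\Sub{A}$ (which exist because $\C$ is coherent).

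The linchpin is the identity $\exists_a(a^*(S)) = A \wedge S$, valid for every subobject $S \leq X$. To verify it, recall that $a^*(S)$ is the subobject of $A$ obtained by pulling back $S \mono X$ along $a$, that is, $A \wedge S$ viewed inside $\Sub{A}$; and $\exists_a$ applied to this is the image in $X$ of the composite $A \wedge S \mono A \mono X$. Since this composite is itself a monomorphism, its image is the subobject it already represents, namely $A \wedge S$, now viewed in $\Sub{X}$.

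With these observations in hand, the proof reduces to a short computation:
\begin{align*}
A \wedge (B \vee C)
&= \exists_a\bigl(a^*(B \vee C)\bigr) \\
&= \exists_a\bigl(a^*(B) \vee a^*(C)\bigr) \\
&= \exists_a\bigl(a^*(B)\bigr) \vee \exists_a\bigl(a^*(C)\bigr) \\
&= (A \wedge B) \vee (A \wedge C),
\end{align*}
where the first and last steps invoke the linchpin identity, the second uses the coherence of $\C$ (preservation of joins by $a^*$), and the third uses that the left adjoint $\exists_a$ preserves joins. I expect the main --- and essentially the only --- subtlety to lie in verifying the identity $\exists_a a^* = A \wedge (-)$ with care, since everything else is a direct appeal to the stated adjunction, to the coherence hypothesis, and to the general preservation of suprema by left adjoints.
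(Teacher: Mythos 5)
Your proof is correct: the identity $\exists_a(a^*(S)) = A \wedge S$ holds for exactly the reason you give (the image of a monomorphism is itself), and combining it with join-preservation of $a^*$ (coherence) and of $\exists_a$ (left adjoint) yields distributivity. The paper itself omits the argument and defers to \cite[Lemma A.1.4.2]{Elephant1}, and your proof is precisely the standard one given there, so there is nothing to add.
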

Therefore, for every coherent category $\C$, the functor $\Sub\colon\C^{\op}\to\SL$ from~\eqref{eq:semilattice-hyperdoctrine} factors through the forgetful functor $\DL\to\SL$, where $\DL$ is the category of bounded distributive lattices and bounded lattice homomorphisms. Hence, we get a functor
\begin{equation*}
\Sub\colon \C^{\op}\to \DL .
\end{equation*}
We mention in passing that this functor is at the base of the theory of (coherent) hyperdoctrines, a fundamental tool in the categorical semantics of predicate logic~\cite{Lawvere2006}.

\section{The topological representation}\label{s:top-representation}

Let $\C$ be a category admitting a terminal object $\1$. Throughout, for any object $X$ of $\C$, we call a morphism \[\1\to X\] in the category $\C$ a \emph{point} of $X$. (Thus, for example, a point $x$ of a topological space $X$ is identified with the continuous map $\{*\}\to X$ from the one-point space which selects~$x$.) Points are usually called (\emph{global}) \emph{elements} in category theory.
Each point $\1\to X$ is a section of the unique morphism $!\colon X\to \1$, hence a monomorphism. It follows that every point of $X$ belongs to the poset of subobjects $\Sub{X}$.
We can define a functor
\begin{align}\label{eq:functor-pt}
\pt=\Hom_{\C}(\1,-)\colon \C\to \Set
\end{align}
taking $X$ to the set $\pt{X}$ of its points. 

The aim of this section is to provide sufficient conditions on $\C$, so that the functor $\pt\colon \C\to\Set$ lifts to a faithful functor into the category $\Top$ of topological spaces and continuous maps, yielding a topological representation of $\C$ (cf.\ Theorem~\ref{t:Spec-topological-lifting} below). To achieve this aim we prepare several lemmas.
In seeking a representation of the objects of $\C$ by means of their points, it is useful to assume that the functor $\pt\colon \C\to\Set$ is faithful:
\begin{definition}
A category $\C$ admitting a terminal object $\1$ is \emph{well-pointed} if the functor $\pt$ in~\eqref{eq:functor-pt} is faithful. That is, for any two distinct morphisms $f,g\colon X\rightrightarrows Y$ in $\C$, there is a point $p\colon \1\to X$ such that $f\circ p\neq g\circ p$.
\end{definition}

A coherent category may have non-trivial \emph{subterminal objects}, i.e.\ objects $X\not\cong \0,\1$ such that the unique morphism $X\to \1$ is monic. Such objects have no points, for otherwise the unique morphism $X\to \1$ would be both a monomorphism and a retraction, whence an isomorphism. 
\begin{lemma}\label{l:subterminals}
In a coherent category the following conditions are equivalent:
\begin{enumerate}
\item there are no non-trivial subterminal objects;
\item for every $X\not\cong\0$, the unique morphism $!\colon X\to\1$ is a regular epimorphism.
\end{enumerate}
\end{lemma}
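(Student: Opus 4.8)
The plan is to prove the two implications separately, leaning on the (regular epi, mono) factorisation through images that is available in any regular, hence coherent, category, together with the strictness of the initial object recorded in Lemma \ref{l:cohe-strict-initial}.

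For the direction (2) $\Rightarrow$ (1), I would start from a subterminal object $X$, so that $!\colon X\to\1$ is a monomorphism, and aim to show $X\cong\0$ or $X\cong\1$. If $X\not\cong\0$, then hypothesis (2) makes $!$ a regular epimorphism as well. A morphism that is simultaneously monic and a regular epimorphism is an isomorphism: a regular epi is the coequaliser of some pair $f,g$, being also monic forces $f=g$, and the coequaliser of an equal pair is invertible. Hence $X\cong\1$, which settles this (easy) direction.

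For the converse (1) $\Rightarrow$ (2), I would fix $X\not\cong\0$ and factor $!\colon X\to\1$ through its image,
\[
X\epi\exists_!(X)\mono\1,
\]
with the left map a regular epi and the right map a mono. The crucial observation is that $\exists_!(X)$, being (the domain of) a subobject of $\1$, is a subterminal object; by hypothesis (1) it must therefore be isomorphic to $\0$ or to $\1$. I would then dispose of the two cases. If $\exists_!(X)\cong\0$, the regular epi $X\epi\exists_!(X)$ produces a morphism $X\to\0$, and since $\0$ is strict (Lemma \ref{l:cohe-strict-initial}) this forces $X\cong\0$, contradicting our standing assumption. Hence $\exists_!(X)\cong\1$, and it remains to check that the mono $\exists_!(X)\mono\1$ is then an isomorphism, so that $!$ coincides, up to this iso, with the regular epimorphism $X\epi\exists_!(X)$.

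The routine ingredients here are the standard facts that a monomorphism which is a regular epi is an isomorphism and that images furnish (regular epi, mono) factorisations. The one step deserving genuine care is the final case analysis on the image: strictness of $\0$ eliminates the initial case, while terminality of $\1$ must be used to upgrade the surviving mono to an isomorphism—precomposing $\exists_!(X)\mono\1$ with an isomorphism $\1\cong\exists_!(X)$ yields the unique endomorphism of $\1$, namely the identity, exhibiting a section and hence invertibility. I expect this last identification, that a subobject of $\1$ whose domain is isomorphic to $\1$ is the top subobject, to be the main (if minor) obstacle.
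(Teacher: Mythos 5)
Your proof is correct and follows essentially the same route as the paper: the paper also takes the (regular epi, mono) factorisation $X\epi S\mono\1$, rules out $S\cong\0$ by strictness of the initial object, and handles the converse by noting that a monic regular epi is an isomorphism. Your only addition is to spell out explicitly why a subobject of $\1$ with domain isomorphic to $\1$ is the top subobject, a step the paper leaves implicit.
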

\begin{proof}
Suppose there are no non-trivial subterminal objects and consider the (regular epi, mono) factorisation $X\epi S\mono \1$ of the unique morphism $!\colon X\to\1$. Then, either $S\cong \0$ or $S\cong \1$. Since the initial object is strict by Lemma~\ref{l:cohe-strict-initial}, if $X\not\cong\0$ then it must be $S\cong \1$. Therefore, the unique morphism $X\to \1$ is a regular epi. 
Conversely, assume $X\to\1$ is a regular epimorphism whenever $X\not\cong \0$. If $X$ is not initial and the unique morphism $X\to \1$ is monic, then $X\cong \1$ (just recall that a mono which is also a regular epi must be an isomorphism). Hence, there are no non-trivial subterminal objects.
\end{proof}
Since we aim to capture a classical notion of point, we should ensure that the category at hand has no non-trivial subterminal objects. In fact, we will impose a stronger condition: namely, that any non-initial object admits a point. The points of an object $X$ being exactly the sections of the unique morphism $X\to\1$, this amounts to saying that $!\colon X\to\1$ is a retraction (hence, a regular epimorphism) whenever $X\not\cong\0$. 
\begin{example}
Let $\C=\BA^\op$ be the opposite of the category of Boolean algebras and their homomorphisms. The statement that for every non-initial object $X$ in $\C$ the unique morphism $X\to\1$ is a retraction is equivalent to saying that every non-trivial Boolean algebra admits a maximal ideal. This is known as the \emph{Maximal Ideal Theorem}.
\end{example}
Lemma~\ref{l:cohe-strict-initial} implies that in a coherent category we have $\0\cong \1$ if, and only if, any two objects are isomorphic, i.e.\ the category is equivalent to the terminal category with only one object and one morphism. Thus, if $\0\cong\1$, we say that the category is \emph{trivial}. 
For the remainder of the section, we work with a fixed category $\X$ satisfying the following properties.
\begin{assumption}\label{assumpt:s3}
The category $\X$ is a non-trivial well-pointed coherent category in which the unique morphism $X\to\1$ is a retraction for every $X\not\cong\0$.
\end{assumption}
We note in passing that, if $X$ is an object of $\X$ such that the copower $\sum_{\pt{X}}{\1}$ exists in $\X$, then the canonical morphism
\[
\sum_{\pt{X}}{\1}\to X
\]
is an epimorphism by well-pointedness of $\X$. That is, $X$ is an epimorphic image of the coproduct of its points.
In view of the discussion above, the next lemma is immediate.
\begin{lemma}\label{l:pt-well-defined-and-faithful}
The functor $\pt\colon \X\to\Set$ from~\eqref{eq:functor-pt} is faithful.
\end{lemma}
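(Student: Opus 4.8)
The plan is to verify the two assertions separately, both of which follow directly from the setup, so that the argument is genuinely immediate.

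\textbf{Well-definedness.} The functor $\pt$ is nothing but the covariant hom-functor $\Hom_\X(\1,-)$ based at the terminal object $\1$, which exists in $\X$ by the standing Assumption. First I would record that, by local smallness, the assignment $X\mapsto \Hom_\X(\1,X)=\pt{X}$ produces a genuine set for each object $X$, so that $\pt$ indeed lands in $\Set$; and that each morphism $f\colon X\to Y$ induces the function $\pt(f)\colon \pt{X}\to\pt{Y}$, $p\mapsto f\circ p$. Functoriality then amounts to the two identities $\pt(\mathrm{id}_X)=\mathrm{id}_{\pt{X}}$ and $\pt(g\circ f)=\pt(g)\circ\pt(f)$, which are immediate consequences of the unit and associativity laws for composition in $\X$. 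This is exactly the observation that \eqref{eq:functor-pt} defines a functor.

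\textbf{Faithfulness.} Here I would simply invoke the hypothesis: by the standing Assumption the category $\X$ is well-pointed, and well-pointedness is \emph{defined} to mean precisely that the functor $\pt$ of \eqref{eq:functor-pt} is faithful. Equivalently, for any two distinct morphisms $f,g\colon X\rightrightarrows Y$ there is a point $p\colon\1\to X$ with $f\circ p\neq g\circ p$, which is to say $\pt(f)\neq\pt(g)$. Hence there is nothing further to prove.

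Since both clauses reduce to routine functoriality of a hom-functor and to the very definition of well-pointedness, no real obstacle arises. The role of the lemma is organisational: it records, for use throughout the section, that the representing functor $\pt\colon\X\to\Set$ is available and that it separates parallel morphisms, which is what will later allow the points of an object to be used faithfully in reconstructing the maps of $\X$.
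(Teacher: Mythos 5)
Your proof is correct and matches the paper's intent exactly: the paper states this lemma with an immediate \qed, since well-definedness is just functoriality of the hom-functor $\Hom_{\X}(\1,-)$ and faithfulness is literally the definition of well-pointedness assumed for $\X$. Nothing is missing.
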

Recall that an object $X$ in a coherent category has always two (possibly non-distinct) trivial subobjects, namely the unique morphism $\0\to X$ and the identity $X\to X$. If these are the only subobjects of $X$, we say that $X$ \emph{has no non-trivial subobjects}. If $X$ is a subobject of $Y$, then $X$ is an \emph{atom} of $\Sub{Y}$ if, and only if, $X\not\cong\0$ and it has no non-trivial subobjects.
\begin{lemma}\label{l:points-ex-mono}
The following statements hold:
\begin{enumerate}
\item for every $X$ in $\X$, the atoms of the lattice $\Sub{X}$ are precisely the points of $X$; 
\item the functor $\pt\colon \X\to \Set$ preserves regular epis, i.e., $\1$ is regular projective.
\end{enumerate}
\end{lemma}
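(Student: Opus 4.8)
The plan is to treat the two parts separately, using throughout that $\0\not\cong\1$ (non-triviality) and that every non-initial object admits a point, i.e.\ that $!\colon Z\to\1$ is a retraction whenever $Z\not\cong\0$.

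For part (1), I would first show that every point is an atom. Fix a point $p\colon\1\to X$; since $\1\not\cong\0$, the subobject $p$ is not the bottom element $[\0\mono X]$ of $\Sub{X}$. Suppose $m\colon S\mono X$ is a subobject with $S\not\cong\0$ and $m\leq p$, witnessed by a morphism $h\colon S\to\1$ with $p\circ h=m$. Since $p$ and $m$ are monic, $h$ is monic as well; but $h$ is forced to be the unique morphism $S\to\1$, which is a retraction because $S\not\cong\0$. A monic retraction is an isomorphism, so $m\sim p$, proving that $p$ is an atom. Conversely, if $m\colon S\mono X$ is an atom then $S\not\cong\0$, so $S$ carries a point $q\colon\1\to S$; the composite $m\circ q\colon\1\to X$ is then a point of $X$ satisfying $[\0\mono X]\neq m\circ q\leq m$ in $\Sub{X}$. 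Minimality of the atom $m$ forces $m\sim m\circ q$, exhibiting $m$ as a point.

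For part (2), recall that proving $\pt$ preserves regular epis is exactly the assertion that $\1$ is regular projective. I would take a regular epi $e\colon X\epi Y$ and a point $p\colon\1\to Y$, and form the pullback $P$ of $e$ along $p$, with projections $\pi_X\colon P\to X$ and $\pi_{\1}\colon P\to\1$. Since regular epis are pullback-stable in any regular category, $\pi_{\1}$ is a regular epi. The key step is to check that $P\not\cong\0$: if $P\cong\0$, then $\0\to\1$ would be a regular epi, hence (being the coequaliser of its kernel pair, which is $\mathrm{id},\mathrm{id}\colon\0\rightrightarrows\0$ by strictness of $\0$) an isomorphism, contradicting non-triviality. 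With $P\not\cong\0$ established, the running assumption makes $\pi_{\1}$ a retraction, so there is a section $s\colon\1\to P$ with $\pi_{\1}\circ s=\mathrm{id}_{\1}$. Then $\tilde{p}:=\pi_X\circ s\colon\1\to X$ is a point lifting $p$, since $e\circ\tilde{p}=p\circ\pi_{\1}\circ s=p$. Hence $\pt(e)$ is surjective.

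The routine parts are the elementary manipulations with monos and atoms in part (1). The one genuinely delicate point is ruling out $P\cong\0$ in part (2), where I must invoke the strictness of the initial object (Lemma \ref{l:cohe-strict-initial}) together with non-triviality to prevent $\0\to\1$ from being a regular epimorphism; here I also rely on the fact that in a regular category every regular epi is the coequaliser of its kernel pair. Everything else follows from pullback-stability of regular epis and the standing assumption that non-initial objects have points.
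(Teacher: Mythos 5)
Your proof is correct and follows essentially the same route as the paper's: for (1) both arguments identify the atoms with the non-initial subobjects having no non-trivial subobjects and use the standing assumption that $!\colon S\to\1$ splits for $S\not\cong\0$; for (2) both pull the point back along the regular epimorphism, use pullback-stability to rule out $P\cong\0$, and take a section of $P\to\1$. The only cosmetic difference is that where you justify ``a regular epi $\0\to\1$ would be an isomorphism'' via kernel pairs, the paper simply notes that $\0\to\1$ is monic and a mono which is a regular epi is an isomorphism.
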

\begin{proof}
For item $1$, we must prove that the terminal object $\1$ is the unique non-initial object of $\X$ which has no non-trivial subobjects. By Lemma~\ref{l:subterminals}, $\1$ has no non-trivial subobjects. Now, suppose $X$ is an object of $\X$ which admits no non-trivial subobjects. If $X$ is not initial, it has a point $\1\to X$. The latter is a section, whence a monomorphism. Since $\0\ncong\1$, and $X$ has no non-trivial subobjects, we conclude that $X\cong \1$.

For item $2$, let $f\colon X\epi Y$ be a regular epimorphism in $\X$. If $Y\cong\0$, then $f$ is an isomorphism by Lemma~\ref{l:cohe-strict-initial}. Thus $\pt{f}$ is an isomorphism, whence a regular epi. If $Y\not\cong\0$, let $p\colon\1\to Y$ be an arbitrary point of $Y$. We must exhibit $q\in \pt{X}$ such that $\pt{f}(q)=p$. Consider the following pullback square.
\[\begin{tikzcd}
Z \arrow{r}{!} \arrow{d}[swap]{g}  \arrow[dr, phantom, "\lrcorner", very near start] & \1 \arrow{d}{p} \\
X \arrow[twoheadrightarrow]{r}{f} & Y
\end{tikzcd}\]
Since regular epis in $\X$ are pullback stable, $!\colon Z\to \1$ is a regular epi. Note that $Z\ncong\0$, because the unique 
morphism $\0\to\1$ is mono and $\0\ncong \1$. Therefore, $Z$ admits a point $q'\colon \1\to Z$. Defining $q\in\pt{X}$ as 
the composition $g\circ q'\colon\1\to X$ yields 
\[
\pt{f}(q)=f\circ q=f\circ g\circ q'= p\circ {!} \circ q' =p,
\] 
as was to be shown.
\end{proof}
Given an object $X$ of $\X$ and a subobject $S\in\Sub{X}$, define the set
\begin{equation*}
\Vs(S)=\left\{p\colon \1\to X\mid p \ \text{factors through the subobject} \ S\mono X\right\}
\end{equation*}
of all points of $X$ which ``belong to $S$''. Clearly, $\Vs(S)\cong \pt{S}$. Conversely, we would like to be able to define a subobject of $X$ induced by the choice of a subset of points of $X$. Note that the operator $\Vs\colon \Sub{X} \to \P(\pt{X})$ preserves all infima existing in $\Sub{X}$. If the poset of subobjects $\Sub{X}$ is complete then $\Vs$ has a lower adjoint $\Cg\colon \P(\pt{X})\to \Sub{X}$. This is defined by setting, for any subset $T\subseteq \pt{X}$, 
\begin{equation*}
\Cg(T)=\bigwedge{\left\{S\in\Sub{X}\mid \text{each} \ p\in T \ \text{factors through} \ S\right\}}.
\end{equation*}
That is, $\Cg(T)$ is the smallest  subobject of $X$ which ``contains (all the points in) $T$''. The adjunction
\begin{equation*}
\begin{tikzcd}
\P(\pt{X}) \arrow[yshift=-5pt]{rr}[swap]{\Cg}  & {\scriptscriptstyle \top} & \arrow[yshift=5pt]{ll}[swap]{\Vs} \Sub{X}
\end{tikzcd}
\end{equation*}
induces a closure operator $\cl_X=\Vs\circ\Cg$ on $\P(\pt{X})$. To improve readability, we omit reference to the object $X$ and write $\cl$ instead of $\cl_X$. For the next lemma, recall that a \emph{mono-complete} category is one in which every poset of subobjects is complete.
\begin{lemma}\label{l:pseudo-cc}
Assume $\X$ is mono-complete.
For every morphism $f\colon X \to Y$ in $\X$ and every $T\in\P(\pt{X})$, \[\pt{f}(\cl{T})= \cl{(\pt{f}(T))}.\] 
\end{lemma}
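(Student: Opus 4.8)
The plan is to factor the two closure operators through the subobject lattices and reduce the claim to a pair of naturality statements that can then be pasted together. Writing $m\colon S\mono X$ for a representative of a subobject, recall that $\cl{T}=\Vs(\Cg(T))$ on $\P(\pt{X})$ and likewise on $\P(\pt{Y})$, and that on the ``point side'' the direct image $\pt{f}(-)\colon\P(\pt{X})\to\P(\pt{Y})$ is left adjoint to the preimage $(\pt{f})^{-1}$. The two intermediate facts I would isolate are:
\begin{enumerate}
\item[(A)] $\pt{f}(\Vs(S))=\Vs(\exists_f(S))$ for every $S\in\Sub{X}$;
\item[(B)] $\exists_f(\Cg(T))=\Cg(\pt{f}(T))$ for every $T\in\P(\pt{X})$.
\end{enumerate}
Granting these, the lemma follows by the computation
\[
\pt{f}(\cl{T})=\pt{f}(\Vs(\Cg(T)))\overset{(A)}{=}\Vs(\exists_f(\Cg(T)))\overset{(B)}{=}\Vs(\Cg(\pt{f}(T)))=\cl{(\pt{f}(T))}.
\]

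For (B) I would argue purely formally, via the uniqueness of adjoints. First I would establish the naturality of $\Vs$ with respect to pullback, namely $\Vs(f^*S)=(\pt{f})^{-1}(\Vs(S))$ for every $S\in\Sub{Y}$: a point $p\colon\1\to X$ factors through the pullback $f^*S\mono X$ if, and only if, $f\circ p$ factors through $S\mono Y$, which is immediate from the universal property of the pullback (here the monicity of $S\mono Y$ guarantees the factorisation is unique). Reading this as an equality $\Vs\circ f^*=(\pt{f})^{-1}\circ\Vs$ of \emph{right} adjoints, and recalling that $\exists_f\dashv f^*$, that $\Cg\dashv\Vs$ (available since $\X$ is mono-complete), and that $\pt{f}(-)\dashv(\pt{f})^{-1}$, the left adjoints of the two composites must coincide. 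Since the left adjoint of $\Vs\circ f^*$ is $\exists_f\circ\Cg$ and that of $(\pt{f})^{-1}\circ\Vs$ is $\Cg\circ\pt{f}(-)$, this yields exactly (B).

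The substantive point is (A), and this is where the hypotheses on $\X$ genuinely enter. The inclusion $\subseteq$ is routine: if $p$ factors through $m\colon S\mono X$, then $f\circ p$ factors through the image $\exists_f(S)\mono Y$ by way of the (regular epi, mono) factorisation $S\epi\exists_f(S)\mono Y$ of $f\circ m$. The reverse inclusion is the crux. Given a point of $Y$ factoring through $\exists_f(S)\mono Y$, say as $\1\to\exists_f(S)\mono Y$, I must lift the component $\1\to\exists_f(S)$ along the regular epimorphism $S\epi\exists_f(S)$ to a point $\1\to S$; composing it with $m$ then produces a point of $X$ lying in $\Vs(S)$ and mapping to the prescribed point of $Y$. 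This lifting is precisely the assertion that $\1$ is regular projective, i.e.\ that $\pt$ preserves regular epimorphisms, which is Lemma \ref{l:points-ex-mono}(2). I therefore expect this regular-projectivity input in step (A) to be the only non-formal ingredient of the proof; the remainder is a matter of adjointness and the universal property of pullbacks.
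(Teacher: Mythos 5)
Your proof is correct, and the crucial non-formal ingredient is the same as in the paper's argument: the regular projectivity of $\1$ (item 2 of Lemma \ref{l:points-ex-mono}), used to lift points along the regular-epi part of the image factorisation; this is exactly how the paper establishes the surjectivity of $\pt$ on the canonical map $\Cg(T)\epi\exists_f(\Cg(T))$ in the inclusion $\supseteq$. Where you differ is in the packaging. The paper proves the two inclusions separately by element-chasing: the inclusion $\subseteq$ via the universal property of the pullback $f^*(S)$, and the inclusion $\supseteq$ via the explicit formula $\exists_f(\Cg(T))=\bigwedge\{S\in\Sub{Y}\mid\Cg(T)\leq f^*(S)\}$ followed by the identification $\Vs(\exists_f(\Cg(T)))=\pt{f}(\cl{T})$, which is your (A). You instead isolate the two naturality statements (A) and (B) and derive (B) purely formally from the Beck--Chevalley-style identity $\Vs\circ f^*=(\pt{f})^{-1}\circ\Vs$ by uniqueness of left adjoints (note that this requires $\Cg$ to exist on both $\P(\pt{X})$ and $\P(\pt{Y})$, which is where mono-completeness enters on your route, just as it does on the paper's). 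This buys a cleaner separation of the formal adjointness bookkeeping from the single substantive input, and in fact yields the exact equality (B), of which the paper only ever uses one inequality in each direction; the paper's version is more self-contained and avoids invoking uniqueness of adjoints. Both are complete proofs.
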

\begin{proof}
Fix an arbitrary $T\in\P(\pt{X})$. We first prove that $\pt{f}(\cl{T})\subseteq \cl{(\pt{f}(T))}$. Let $q\in\pt{f}(\cl{T})$, i.e.\ $q=\pt{f}(p)$ for some $p\in\pt{X}$ which belongs to all the subobjects of $X$ which contain all the points in $T$. We must prove that $q$ belongs to every subobject of $Y$ containing all the points of the form $f\circ p'$, with $p'\in T$. Let $S$ be a subobject of $Y$ satisfying the latter property and consider the following pullback square in $\X$.
\[\begin{tikzcd}
f^*(S) \arrow{r} \arrow[hookrightarrow]{d} \arrow[dr, phantom, "\lrcorner", very near start] & S \arrow[hookrightarrow]{d} \\
X \arrow{r}{f} & Y 
\end{tikzcd}\]
By the universal property of the pullback, $f^*(S)$ contains all the points in $T$. Hence $p\in f^*(S)$. It follows that $q=f\circ p\in S$, as was to be proved.

To show that $\pt{f}(\cl{T})\supseteq \cl{(\pt{f}(T))}$, suppose $q\in \cl(\pt{f}(T))$. That is, $q$ is a point of $Y$ which belongs to all the subobjects of $Y$ which contain all the points of the form $\pt{f}(p)$, with $p\in T$. We must prove that $q\in \pt{f}(\cl{T})$. 
Recall that
\begin{align}\label{eq:image-as-meet}
\exists_f(\Cg(T))=\bigwedge{\{S\in\Sub{Y}\mid \Cg(T)\leq f^*(S)\}}.
\end{align}
Now, if $S$ is an arbitrary subobject of $Y$ satisfying $\Cg(T)\leq f^*(S)$, every point of $T$ must belong to $f^*(S)$. Thus, $S$ contains every point of the form $\pt{f}(p)$ for $p\in T$, and so $q\leq S$. By equation~\eqref{eq:image-as-meet}, we have $q\leq  \exists_f(\Cg(T))$. To conclude, it is enough to show that
\begin{align*}
\Vs(\exists_f(\Cg(T)))=\pt{f}(\cl{T}).
\end{align*}
Let $e\colon \Cg(T)\epi \exists_f(\Cg(T))$ be the canonical regular epi. Then, item $2$ in Lemma~\ref{l:points-ex-mono} applies to show that $\pt{e}$ is surjective. Therefore, $\Vs(\exists_f(\Cg(T)))=\pt{f}(\cl{T})$.
\end{proof}
It turns out that the closure operators $\cl$ induce topologies on the sets of points of the objects of $\X$:
\begin{proposition}\label{p:topological-operator}
Let $X$ in $\X$ be such that the lattice $\Sub{X}$ is complete. The closure operator $\cl$ on $\P(\pt{X})$ is topological, i.e.\ it preserves finite unions.
\end{proposition}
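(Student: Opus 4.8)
The plan is to verify the two conditions that, together with the closure-operator axioms already at hand, characterise a topological (Kuratowski) closure: that $\cl$ preserves the empty union and binary unions. Since $\cl=\Vs\circ\Cg$ is monotone, the inclusions $\cl(T_1)\cup\cl(T_2)\subseteq\cl(T_1\cup T_2)$ hold automatically, so the work lies entirely in the reverse inclusion $\cl(T_1\cup T_2)\subseteq\cl(T_1)\cup\cl(T_2)$ together with $\cl(\emptyset)=\emptyset$.

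For the nullary case, I would observe that $\Cg(\emptyset)$ is the least subobject of $X$, namely $\0\mono X$, and that no point $\1\to X$ can factor through it: such a factorisation would produce a morphism $\1\to\0$, forcing $\1\cong\0$ by strictness of the initial object (Lemma \ref{l:cohe-strict-initial}), contrary to non-triviality. Hence $\cl(\emptyset)=\Vs(\0)=\emptyset$.

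The binary case carries the real content, and I would split it into two independent facts. First, because $\Cg$ is a left adjoint to $\Vs$ (available since $\X$ is mono-complete), it preserves all joins existing in $\P(\pt{X})$; in particular $\Cg(T_1\cup T_2)=\Cg(T_1)\vee\Cg(T_2)$, the join being taken in $\Sub{X}$. Second, and this is the crux, I would show that $\Vs$ carries binary joins of subobjects to unions, i.e.\ $\Vs(S_1\vee S_2)\subseteq\Vs(S_1)\cup\Vs(S_2)$. Chaining these two facts yields $\cl(T_1\cup T_2)=\Vs(\Cg(T_1)\vee\Cg(T_2))\subseteq\Vs(\Cg(T_1))\cup\Vs(\Cg(T_2))=\cl(T_1)\cup\cl(T_2)$, as required.

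The main obstacle is the crux inclusion, which amounts to the join-primality of points: if a point $p$ lies below $S_1\vee S_2$ in $\Sub{X}$, then it must lie below one of the $S_i$. Here I would invoke the two structural results already established, namely that $\Sub{X}$ is a distributive lattice (Lemma \ref{l:posets-coherent-distributive-lattice}) and that the points of $X$ are exactly the atoms of $\Sub{X}$ (Lemma \ref{l:points-ex-mono}). Distributivity gives $p=p\wedge(S_1\vee S_2)=(p\wedge S_1)\vee(p\wedge S_2)$; since $p$ is an atom, each meet $p\wedge S_i$ equals $\0$ or $p$, and they cannot both be $\0$ (lest $p=\0$), so $p\leq S_1$ or $p\leq S_2$. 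This is the only step that genuinely uses the interplay between the lattice-theoretic and the categorical hypotheses, and it is where I would expect any difficulty to concentrate.
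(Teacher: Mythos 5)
Your proposal is correct and follows essentially the same route as the paper: reduce to showing that $\Vs$ preserves finite joins (since $\Cg$, being a lower adjoint, preserves all joins), handle the nullary case via non-triviality, and settle the binary case by the join-primality of points, which are atoms of the distributive lattice $\Sub{X}$. The only difference is that you spell out the standard argument that atoms in a distributive lattice are join-prime, which the paper simply cites as a known fact.
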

\begin{proof}
Recall that $\cl=\Vs\circ\Cg$. 
The operator $\Cg$ preserves arbitrary joins because it is lower adjoint. Hence, it is enough to show that $\Vs$ preserves finite joins. Since $\X$ is non-trivial, $\Vs(\0)=\emptyset$. Now, let $S_1,S_2\in \Sub{X}$ and pick a point $p\in\pt{X}$. The latter is an atom of $\Sub{X}$ by Lemma~\ref{l:points-ex-mono}. Since $\Sub{X}$ is a distributive lattice by Lemma~\ref{l:posets-coherent-distributive-lattice}, and atoms in a distributive lattice are always join-prime, we conclude that $p\leq S_1\vee S_2$ if, and only if, either $p\leq S_1$ or $p\leq S_2$. That is, $\Vs(S_1\vee S_2)=\Vs(S_1)\cup \Vs(S_2)$.
\end{proof}
If $\X$ is mono-complete, the previous proposition entails that, for every object $X$ of $\X$, the set $\pt{X}$ admits a topology whose closed sets are the fixed points for the operator~$\cl$. Write $\Spec{X}$ for the ensuing topological space and notice that this is a $\mathrm{T}_1$-space. For every morphism $f\colon X\to Y$ in $\X$, by the well-known characterisation of continuity in terms of closure operators, the inclusion $\pt{f}(\cl{T})\subseteq\cl{(\pt{f}(T))}$ in Lemma~\ref{l:pseudo-cc} implies that $\pt{f}\colon \Spec{X}\to\Spec{Y}$ is a continuous function. Hence, setting $\Spec{f}=\pt{f}$, the functor $\pt\colon \X\to \Set$ lifts to a functor
\begin{align}\label{eq:functor-Spec}
\Spec\colon \X\to \Top
\end{align}
into the category of topological spaces and continuous maps. Write $|-|\colon \Top \to \Set$ for the underlying-set functor. Because the functor $\pt$ is faithful (Lemma~\ref{l:pt-well-defined-and-faithful}) and the next diagram commutes, we conclude that $\Spec\colon \X\to \Top$ is a faithful functor.
\[\begin{tikzcd}[column sep=12pt]
{} & \X \arrow{dr}{\Spec} \arrow{dl}[swap]{\pt} & \\
\Set & & \Top \arrow{ll}[swap]{|-|}
\end{tikzcd}\]
Therefore, we have proved the following theorem.
\begin{theorem}\label{t:Spec-topological-lifting}
If $\X$ is mono-complete, the functor of points $\pt\colon \X\to \Set$ lifts to a faithful functor $\Spec\colon \X\to \Top$.
\end{theorem}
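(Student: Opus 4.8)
The plan is to assemble the data already constructed in the preceding discussion into a functor and then verify, in turn, that it lands in $\Top$, that it is functorial, and that it is faithful. Since $\X$ is mono-complete, every poset $\Sub{X}$ is complete, so for each object $X$ the adjunction $\Cg\dashv\Vs$ between $\P(\pt{X})$ and $\Sub{X}$ exists and yields the closure operator $\cl=\Vs\circ\Cg$ on $\P(\pt{X})$. First I would invoke Proposition \ref{p:topological-operator} to conclude that $\cl$ preserves finite unions and is therefore a genuine Kuratowski closure operator; its fixed points then serve as the closed sets of a topology on $\pt{X}$. I define $\Spec{X}$ to be the resulting topological space, whose underlying set is exactly $\pt{X}$.

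Next I would treat morphisms. For $f\colon X\to Y$ in $\X$, I set $\Spec{f}=\pt{f}$ on underlying sets and must check that this map is continuous from $\Spec{X}$ to $\Spec{Y}$. Here I would appeal to the standard characterisation of continuity in terms of closure: a map is continuous precisely when the image of the closure of any set is contained in the closure of its image. Lemma \ref{l:pseudo-cc} supplies the equality $\pt{f}(\cl{T})=\cl{(\pt{f}(T))}$, and in particular the inclusion $\pt{f}(\cl{T})\subseteq\cl{(\pt{f}(T))}$, which is exactly what the criterion demands; thus $\Spec{f}$ is well-defined and continuous. Functoriality then comes for free: because $\Spec{f}$ and $\pt{f}$ agree as functions of underlying sets, preservation of identities and composites is inherited directly from the functoriality of $\pt$. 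Writing $|-|\colon\Top\to\Set$ for the forgetful functor, this says precisely that $|-|\circ\Spec=\pt$, and faithfulness of $\Spec$ follows from this commuting triangle: since $\pt=|-|\circ\Spec$ is faithful by Lemma \ref{l:pt-well-defined-and-faithful}, and a composite $G\circ F$ can only be faithful if $F$ is, the functor $\Spec$ must itself be faithful.

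I anticipate no serious obstacle, since the substantive content has already been isolated in the two cited results. The only points requiring care are bookkeeping ones: confirming that mono-completeness is exactly what licenses the adjunction $\Cg\dashv\Vs$, and hence the closure operator; and noting that continuity needs only the one inclusion extracted from Lemma \ref{l:pseudo-cc}, not its full equality. The genuine mathematical work — that $\cl$ is topological, and that $\pt{f}$ interacts correctly with $\cl$ — resides in Proposition \ref{p:topological-operator} and Lemma \ref{l:pseudo-cc}, both of which I am free to assume.
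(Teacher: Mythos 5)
Your proposal is correct and follows essentially the same route as the paper: the topology on $\pt{X}$ comes from Proposition \ref{p:topological-operator}, continuity of $\pt{f}$ from the inclusion $\pt{f}(\cl{T})\subseteq\cl{(\pt{f}(T))}$ in Lemma \ref{l:pseudo-cc} via the closure-operator criterion, and faithfulness from the commuting triangle over $\Set$ together with Lemma \ref{l:pt-well-defined-and-faithful}. Your observation that only the one inclusion (not the full equality) is needed for continuity matches the paper exactly.
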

\begin{remark}\label{rm:Spec-closed-maps}
Suppose $\X$ is mono-complete. For any morphism $f\colon X\to Y$ in $\X$, in view of the inclusion $\pt{f}(\cl{T})\supseteq\cl{(\pt{f}(T))}$ in Lemma~\ref{l:pseudo-cc}, the continuous function $\Spec{f}\colon \Spec{X}\to\Spec{Y}$ is closed. That is, it sends closed sets to closed sets. 
\end{remark}
\begin{remark}
We briefly comment on the topological representation of the category $\X$ obtained in Proposition~\ref{p:topological-operator} and Theorem~\ref{t:Spec-topological-lifting}. If $\X=\KH$ and $X$ is a compact Hausdorff space, then the operator $\cl$ on $\P(|X|)$ sends a subset $T\subseteq X$ to the intersection of all closed subspaces of $X$ containing $T$. Just recall that subobjects in $\KH$ can be identified with closed subspaces. Hence, $\cl\colon \P(|X|)\to \P(|X|)$ is the usual topological closure operator associated with $X$, and the space $\Spec{X}$ is homeomorphic to $X$. A similar reasoning applies to the case where $\X=\BStone$.  

The adjunction $\Cg\dashv \Vs\colon \Sub{X} \to \P(\pt{X})$ could be set up for an arbitrary topological space $X$, although the category $\Top$ does not satisfy all the conditions in Assumption~\ref{assumpt:s3}. However, the ensuing operator $\cl$ would not coincide with the usual topological closure. A first hurdle is that subobjects in $\Top$ correspond to (equivalence classes of) continuous injections, which cannot always be identified with subspaces. Even if we restricted ourselves to \emph{regular} subobjects---which correspond to subspaces---the operator $\cl$ would simply be the identity map $\P(|X|)\to\P(|X|)$.
\end{remark}
For every object $X$ of $\X$, the co-restriction of the map $\Vs\colon \Sub{X}\to\P(\pt{X})$ to the set of fixed points of $\cl$ yields a surjective lattice homomorphism 
\begin{align}\label{eq:sub-KSpec}
\Vs\colon \Sub{X}\epi\K(\Spec{X}),
\end{align} 
where $\K(\Spec{X})$ denotes the lattice of closed subsets of $\Spec{X}$.
We conclude this section by showing that this map is a lattice isomorphism if, and only if, $\Sub{X}$ is an \emph{atomic} lattice, i.e.\ every element of $\Sub{X}$ is the supremum of the atoms below it.
\begin{lemma}\label{l:atomic-fixed-point}
Let $X$ in $\X$ be such that the lattice $\Sub{X}$ is complete. The map in~\eqref{eq:sub-KSpec} is a lattice isomorphism between $\Sub{X}$ and $\K(\Spec{X})$ if, and only if, $\Sub{X}$ is atomic. 
\end{lemma}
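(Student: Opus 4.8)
The plan is to reduce the statement to the single identity $\Cg\circ\Vs=\mathrm{id}_{\Sub{X}}$ and then to recognise that identity as atomicity. First I would record the two order-theoretic facts that drive everything. Since $\Sub{X}$ is complete, the smallest subobject containing a set $T$ of points is the meet of all upper bounds of $T$, so $\Cg(T)=\bigvee T$. Moreover, by item~$1$ of Lemma~\ref{l:points-ex-mono} the points of $X$ are precisely the atoms of $\Sub{X}$, and a point $p$ factors through a subobject $S$ exactly when $p\leq S$; hence $\Vs(S)$ is nothing but the set of atoms below $S$. Combining these, for every $S\in\Sub{X}$ one gets $\Cg(\Vs(S))=\bigvee\{\,p\mid p \text{ an atom of } \Sub{X},\ p\leq S\,\}$; that is, $\Cg\circ\Vs$ sends each subobject to the join of the atoms it dominates.

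Next I would reduce ``isomorphism'' to ``injective''. The map in \eqref{eq:sub-KSpec} is already a surjective lattice homomorphism, and a bijective lattice homomorphism has a homomorphic inverse; so it is a lattice isomorphism if, and only if, it is injective. It therefore suffices to prove that $\Vs\colon\Sub{X}\to\K(\Spec{X})$ is injective precisely when $\Sub{X}$ is atomic.

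For this equivalence I would invoke the Galois connection $\Cg\dashv\Vs$, which supplies the triangle identity $\Vs\circ\Cg\circ\Vs=\Vs$. If $\Vs$ is injective, then applying injectivity to $\Vs(\Cg(\Vs(S)))=\Vs(S)$ yields $\Cg(\Vs(S))=S$ for every $S$; by the computation above this says $S=\bigvee\{\,p\mid p\text{ an atom},\ p\leq S\,\}$, which is exactly atomicity of $\Sub{X}$. Conversely, if $\Sub{X}$ is atomic then $\Cg(\Vs(S))=S$ for all $S$, so $\Vs(S_1)=\Vs(S_2)$ forces $S_1=\Cg(\Vs(S_1))=\Cg(\Vs(S_2))=S_2$, giving injectivity.

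The argument is in essence bookkeeping around a Galois connection, so I do not expect a genuine obstacle. The one point requiring care is the identification $\Cg(\Vs(S))=\bigvee\{\,p\mid p\text{ an atom},\ p\leq S\,\}$: it rests on completeness of $\Sub{X}$ (so that $\Cg(T)=\bigvee T$) and on Lemma~\ref{l:points-ex-mono} (to identify points with atoms, and factoring-through with $\leq$). Once that identification is secured, recognising $\Cg\circ\Vs=\mathrm{id}$ as atomicity and feeding it through the triangle identity of the adjunction closes both implications.
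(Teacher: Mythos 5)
Your proof is correct, and the ``atomic $\Rightarrow$ isomorphism'' direction coincides with the paper's: both reduce to showing $\Cg\circ\Vs(S)=S$ for every $S$, which via Lemma \ref{l:points-ex-mono} and completeness of $\Sub{X}$ is exactly the statement that $S$ is the join of the atoms below it. Where you diverge is the converse. The paper argues topologically: $\Spec{X}$ is a $\mathrm{T}_1$-space, so its closed-set lattice $\K(\Spec{X})$ is atomic, and atomicity transfers across any lattice isomorphism to $\Sub{X}$. You instead stay entirely inside the Galois connection $\Cg\dashv\Vs$, using the triangle identity $\Vs\circ\Cg\circ\Vs=\Vs$ to convert injectivity of $\Vs$ into $\Cg\circ\Vs=\mathrm{id}$ and then reading off atomicity from the identification $\Cg(\Vs(S))=\bigvee\{p\mid p\text{ an atom},\ p\leq S\}$. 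Both are sound; the paper's version is shorter because it borrows a standard topological fact, while yours is purely order-theoretic and makes both implications fall out of the single equation $\Cg\circ\Vs=\mathrm{id}$, which is arguably the cleaner conceptual packaging. Your reduction of ``isomorphism'' to ``injective'' via surjectivity of the map in \eqref{eq:sub-KSpec} and the automatic homomorphy of the inverse of a bijective lattice homomorphism is also fine and is implicitly what the paper does.
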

\begin{proof}
Since the space $\Spec{X}$ is $\mathrm{T}_1$, the lattice $\K(\Spec{X})$ is atomic. Hence, if there exists a lattice isomorphism between $\K(\Spec{X})$ and $\Sub{X}$, the latter must be atomic.

Conversely, suppose $\Sub{X}$ is atomic. We will prove that, for each $S\in\Sub{X}$, $\Cg\circ\Vs(S)=S$. It will follow that $\Vs\colon\Sub{X}\epi\K(\Spec{X})$ is injective, whence a lattice isomorphism. Let $S\in \Sub{X}$ be an arbitrary subobject. Clearly, we have $\Cg\circ\Vs(S)\leq S$. For the converse inequality, we must prove that $S\leq S'$ whenever $S'\in\Sub{X}$ is such that every point of $X$ which factors through $S$ factors also through $S'$. In view of item $1$ in Lemma~\ref{l:points-ex-mono}, this holds provided $\Sub{X}$ is atomic.
\end{proof}
%

\section{Filtrality}\label{s:filtrality}
For any bounded distributive lattice $L$, let $\Ct(L)$ denote the subset of $L$ consisting of the complemented elements of $L$. Thus, $x\in\Ct(L)$ if, and only if, there exists an element $\neg x \in L$ such that $x\wedge \neg x=0$ and $x\vee\neg x=1$, where $0$ and $1$ are the bottom and top elements of $L$, respectively. Such an element $\neg x$ is unique  and lies in $\Ct(L)$, whenever it exists. Then $\Ct(L)$, the \emph{Boolean center} of $L$, is the inclusion-largest Boolean algebra embedded as a bounded sublattice in $L$.

Let $\F(L)$ be the set of all non-empty filters of $L$ partially ordered by reverse inclusion. Then, $\F(L)$ is again a bounded distributive lattice. For $x\in L$, define the principal filter $\mathord{\uparrow}{x}=\left\{y\in L\mid x\leq y\right\}$. The  function 
\[
\eta_L\colon L\to \F(L), \ \ x\mapsto \mathord{\uparrow}{x}
\] is a  monomorphism of bounded lattices known as the filter completion of $L$.
Further, consider the contraction map $\chi_L\colon \F(L)\to \F(\Ct(L))$ sending $F\in\F(L)$ to $F\cap \Ct(L)\in \F(\Ct(L))$. It is clear that $\chi_L$ is well-defined, and monotone. 
We are  interested in  the composition
\[\begin{tikzcd}
\phi\colon L \arrow{r}{\eta_L} & \F(L) \arrow{r}{\chi_L} & \F(\Ct(L)), \ \ x \mapsto \mathord{\uparrow}{x} \cap \Ct(L).
\end{tikzcd}\]
We call the bounded distributive lattice $L$  \emph{filtral} precisely when this map $\phi$ is an isomorphism of posets (and hence of bounded lattices).

If $X$ is an object of a coherent category, let us write $\B(X)$ as a short-hand for the unwieldy $\Ct(\Sub{X})$.
\begin{definition}\label{def:filtrality}
An object $X$ of a coherent category $\C$ is \emph{filtral} if $\Sub{X}$ is a filtral lattice. The category $\C$ is said to be \emph{filtral} if each of its objects is covered by a filtral one, i.e.\ for every $Y$ in $\C$ there is a regular epimorphism $X\epi Y$ with $X$ filtral.
\end{definition}
Recall that a \emph{co-frame} is a complete lattice $L$ satisfying, for every set $I$, the infinite distributive law 
\begin{equation}\label{eq:co-frame}
\forall \{a\}\cup\{b_i\mid i\in I\}\subseteq L, \ \ a\vee \big(\bigwedge_{i\in I}{b_i}\big)=\bigwedge_{i\in I}{(a\vee b_i)}.
\end{equation}
More frequent is the notion of frame, the order-dual of a co-frame. It is well known that $I(L)$, the collection of ideals of a bounded distributive lattice $L$, is a frame when ordered by inclusion. Further, the assignment $L\mapsto I(L)$ extends to a functor from the category of bounded distributive lattices to the category of frames (with morphisms the functions preserving finite infima and arbitrary suprema) which is left adjoint to the obvious forgetful functor. See e.g.~\cite[Corollary II.2.11]{Johnstone1986}. It is not difficult to see that $\F(L)\cong I(L^\op)^\op$, whence the filter completion of $L$ is a co-frame. The assignment $L\mapsto \F(L)$ extends to a functor from the category of bounded distributive lattices to the category of co-frames (with morphisms the functions preserving finite suprema and arbitrary infima) which is left adjoint to the obvious forgetful functor. The components of the  unit of this adjunction are the filter completions $\eta_L\colon L\to \F(L)$. In more detail, whenever $L,M$ are bounded distributive lattices and $h\colon L\to M$ is a homomorphism of bounded lattices, we have a co-frame homomorphism 
\[
\F(h)\colon \F(L)\to \F(M), \ \ F\mapsto \mathord{\uparrow}{h(F)}=\{m\in M\mid \exists l\in F \ \text{s.t.} \ h(l)\leq m\}.
\]
Note that $\F(h)$ admits a lower adjoint, namely $h^{-1}\colon \F(M)\to \F(L)$. In these terms, the contraction map $\chi_L\colon \F(L)\to \F(\Ct(L))$ is the lower adjoint to the co-frame homomorphism $\F(i)\colon \F(\Ct(L))\to \F(L)$, where $i\colon \Ct(L)\mono L$ is the inclusion of the Boolean center of $L$. It is not difficult to see that the map $\phi\colon \Sub{X}\to \F(\B(X))$ preserves all suprema which exist in $\Sub{X}$. Therefore, whenever $\Sub{X}$ is complete, $X$ is filtral if, and only if, $\phi$ is a bijection.

As we shall see in the next lemma, in those coherent categories which admit a topological representation as in the previous section, the notion of filtral object can be rephrased without explicit reference to the isomorphism $\phi\colon \Sub{X}\to \F(\B(X))$.
\begin{lemma}\label{l:charact-filtral-object}
Let $\X$ be a non-trivial well-pointed coherent category in which the unique morphism from any non-initial object to the terminal object is a retraction. For any $X$ in $\X$, the following statements are equivalent:
\begin{enumerate}
\item $X$ is filtral;
\item there is an isomorphism of bounded lattices between $\Sub{X}$ and $\F(\B(X))$;
\item $\Sub{X}$ is a complete and atomic lattice, and $\Spec{X}$ is a Stone space.
\end{enumerate} 
\end{lemma}
\begin{proof}
$1\Rightarrow 2$. Clear. 

$2\Rightarrow 3$. Suppose there is an isomorphisms of bounded lattices $\Sub{X}\cong \F(\B(X))$. The lattice $\F(\B(X))$ is a co-frame, hence complete. Further it is atomic, because every filter of a Boolean algebra is the intersection of all the ultrafilters extending it. Thus, $\Sub{X}$ is also a complete and atomic lattice. It remains to prove that $\Spec{X}$ is a Stone space. By Stone duality for Boolean algebras~\cite{Stone1936}, the lattice $\F(\B(X))$ is isomorphic to the lattice of closed sets of the Stone space $\mathcal{S}$ dual to the Boolean algebra $\B(X)$.
On the other hand, Lemma~\ref{l:atomic-fixed-point} entails that $\Sub{X}$ is isomorphic to $\K(\Spec{X})$, the lattice of closed sets of $\Spec{X}$. Therefore, $\K(\Spec{X})$ is isomorphic to $\K(\mathcal{S})$. It is well-known that two $\mathrm{T}_1$-spaces with isomorphic lattices of closed sets are homeomorphic~\cite{Thron1962}. Thus, $\Spec{X}$ is homeomorphic to the Stone space $\mathcal{S}$. 

$3\Rightarrow 1$. Assume $\Sub{X}$ is a complete lattice. Then, the space $\Spec{X}$ is well-defined by Proposition~\ref{p:topological-operator}. If the latter is a Stone space, the Boolean center of $\K(\Spec{X})$ coincides with the Boolean algebra of clopen subsets of $\Spec{X}$, denoted by $\textnormal{Cl}(\Spec{X})$. We claim that the following diagram commutes, where $\Vs$ is the surjective homomorphism of bounded lattices from equation~\eqref{eq:sub-KSpec}.
\[\begin{tikzcd}[column sep=45pt, row sep=25pt]
\Sub{X} \arrow{r}{\eta_{\Sub{X}}} \arrow{d}[swap]{\Vs} & \F(\Sub{X}) \arrow{r}{\chi_{\Sub{X}}} \arrow{d}[swap]{\F(\Vs)} & \F(\B(X)) \arrow{d}{\F(\Vs\restriction\B(X))} \\
\K(\Spec{X}) \arrow{r}{\eta_{\K(\Spec{X})}} & \F(\K(\Spec{X})) \arrow{r}{\chi_{\K(\Spec{X})}} & \F(\textnormal{Cl}(\Spec{X}))
\end{tikzcd}\]
The leftmost square commutes by naturality of the transformation $\eta$. The commutativity of the rightmost square is an instance of a Beck-Chevalley property. We give a direct proof. For every $F\in \F(\Sub{X})$,
\begin{align*}
\left(\F(\Vs\restriction\B(X))\circ \chi_{\Sub{X}}\right)(F)&=\F(\Vs\restriction\B(X))(F\cap \B(X)) \\
&=\{U\in \textnormal{Cl}(\Spec{X})\mid \exists S\in F\cap \B(X) \ \text{s.t.} \ \Vs(S)\subseteq U\} \\
&\subseteq \{U\in \textnormal{Cl}(\Spec{X})\mid \exists S\in F \ \text{s.t.} \ \Vs(S)\subseteq U\} \\
&=\F(\Vs)(F)\cap \textnormal{Cl}(\Spec{X}) \\
&=\left(\chi_{\K(\Spec{X})}\circ\F(\Vs)\right)(F).
\end{align*}
To settle the converse inclusion, suppose that $U\in \textnormal{Cl}(\Spec{X})$ satisfies $\Vs(S)\subseteq U$ for some $S\in F$. If $\Sub{X}$ is atomic, then $\Vs\colon \Sub{X}\to \K(\Spec{X})$ is an isomorphism by Lemma~\ref{l:atomic-fixed-point}. Whence, there is $T\in \B(X)$ such that $\Vs(T)=U$. Then, $\Vs(S)\subseteq \Vs(T)$ implies $S\leq T$. Therefore $T\in F\cap \B(X)$ and $\Vs(T)=U$. 

This shows that the outer rectangle in the diagram above commutes. To conclude the proof, note that the vertical arrows are bounded lattice isomorphisms. Hence, the bottom row is a bounded lattice isomorphism if, and only if, so is the top row. That is, if and only if $X$ is a filtral object. In turn, the bottom row sends a closed subset $K\subseteq\Spec{X}$ to the filter consisting of all the clopen subsets of $\Spec{X}$ containing $K$. It is well-known that this map is a bounded lattice isomorphism whenever $\Spec{X}$ is a Stone space.
\end{proof}
In view of the previous lemma, we can regard filtral objects in a coherent category as an abstraction of zero-dimensional compact Hausdorff spaces. The next example shows that the filtral objects in $\KH$ are precisely the Stone space.
\begin{example}\label{ex:KH-filtral}
Note that the category $\KH$ satisfies the assumptions of Lemma~\ref{l:charact-filtral-object}. Moreover, for any compact Hausdorff space $X$, its lattice of closed subsets $\K(X)\cong \Sub{X}$ is complete and atomic. Hence, Lemma~\ref{l:charact-filtral-object} entails that $X$ is filtral in $\KH$ if, and only if, $\Spec{X}$ is a Stone space. Since the spaces $X$ and $\Spec{X}$ are homeomorphic, we  conclude that the filtral objects in the category $\KH$ are precisely the Stone spaces.
Also, because every compact Hausdorff space is the continuous image of a Stone space (e.g., of the Stone-{\v C}ech compactification of its underlying set equipped with the discrete topology), the category $\KH$ is filtral.
It follows that its full subcategory $\BStone$ is also filtral, and each of its objects is filtral.
\end{example}
An easy, yet useful observation is that every filtral category is mono-complete:
\begin{lemma}\label{l:filtral-co-frame}
In a filtral category every poset of subobjects is a co-frame, and the pullback functors are co-frame homomorphisms. In particular, a filtral category is mono-complete.
\end{lemma}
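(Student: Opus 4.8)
The plan is to prove the co-frame property first for filtral objects, where it can be read off from the filter-lattice description, and then to transport it along the covering regular epimorphisms to an arbitrary object. So let $X$ be filtral, fixing an isomorphism $\Sub{X}\cong\F(\B(X))$. Since $\B(X)$ is a Boolean algebra, in particular a distributive lattice, it suffices to show that for every distributive lattice $L$ the lattice $\F(L)$ of non-empty filters, ordered by reverse inclusion, is a co-frame. I would obtain this by order-duality: a filter of $L$ is exactly an ideal of $L^{\op}$, so the set of filters of $L$ ordered by \emph{inclusion} is isomorphic to the ideal lattice of $L^{\op}$. The ideal lattice of a distributive lattice is a frame---the law $I\cap\bigvee_i J_i=\bigvee_i(I\cap J_i)$ following directly from distributivity---so the filters of $L$ under inclusion form a frame, and $\F(L)$, being its order-dual, is a co-frame. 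Hence $\Sub{X}$ is a co-frame, and in particular complete.

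Now let $Y$ be arbitrary and choose a regular epimorphism $e\colon X\epi Y$ with $X$ filtral. The crucial categorical input is that $\exists_e\circ e^*=\mathrm{id}_{\Sub{Y}}$: since regular epis in $\X$ are pullback-stable, pulling a subobject $S\mono Y$ back along $e$ and then taking its image returns $S$. Consequently $e^*\colon\Sub{Y}\to\Sub{X}$ is injective and reflects the order, i.e.\ it is an order-embedding; being right adjoint to $\exists_e$ it preserves all existing infima, and being a pullback functor in a coherent category it preserves finite joins as well. I would then identify the image of $e^*$ with the set of fixed points of the closure operator $j=e^*\circ\exists_e$ on the complete lattice $\Sub{X}$. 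Since a meet (computed in $\Sub{X}$) of $j$-closed elements is again $j$-closed, this fixed-point set is closed under arbitrary meets; transporting along the order-isomorphism $e^*\colon\Sub{Y}\to\mathrm{Fix}(j)$ shows that $\Sub{Y}$ is complete and that $e^*$ sends its arbitrary meets to meets computed in $\Sub{X}$.

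With $\Sub{Y}$ thus realised, via $e^*$, as a subset of $\Sub{X}$ closed under arbitrary meets and finite joins, the infinite distributive law transfers by injectivity of $e^*$: applying $e^*$ to $a\vee\bigwedge_i b_i$ and to $\bigwedge_i(a\vee b_i)$ and using preservation of finite joins and arbitrary meets turns these into $e^*(a)\vee\bigwedge_i e^*(b_i)$ and $\bigwedge_i(e^*(a)\vee e^*(b_i))$ respectively, which coincide because $\Sub{X}$ is a co-frame. Hence $\Sub{Y}$ is a co-frame, and the final clause follows since every co-frame is complete. I expect the transport step to be the main obstacle: one must secure completeness of $\Sub{Y}$ before the infinite distributive law is even meaningful, and the cleanest way to do so---as well as to guarantee that $e^*$ preserves the arbitrary meets at stake---is through the fixed-point description of $e^*(\Sub{Y})$ together with the fact that the right adjoint $e^*$ preserves infima.
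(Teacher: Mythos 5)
Your proof is correct and follows essentially the same route as the paper's: establish the co-frame property for filtral objects via the filter-lattice description, then transport it to an arbitrary object along the covering regular epimorphism $e$, using that $\exists_e\circ e^*=\mathrm{id}$ makes $e^*$ an injective lattice embedding preserving arbitrary infima and finite joins. The only (harmless) variations are that you spell out why $\F(L)$ is a co-frame and obtain completeness of $\Sub{Y}$ from the fixed points of the closure operator $e^*\circ\exists_e$, whereas the paper exhibits suprema directly as $\exists_e\bigl(\bigvee_i e^*(S_i)\bigr)$.
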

\begin{proof}
Note that pullback functors preserve---in addition to finite suprema---also all existing infima, because they are upper adjoint. Thus, it suffices to show that each poset of subobjects is a co-frame.

First, observe that in any coherent category the pullback functor associated with a regular epimorphism is an injective lattice homomorphism. 
To see this, consider a regular epimorphism $\epsilon\colon W\epi Z$ and subobjects $S_1,S_2\in \Sub{Z}$. Then, for $i\in\{1,2\}$, $S_i$ coincides with the image of the morphism $\epsilon \circ m_i\colon \epsilon^*(S_i)\to Z$, where $m_i\colon \epsilon^*(S_i)\mono W$ is the canonical monomorphism. Therefore, $\epsilon^*(S_1)\cong\epsilon^*(S_2)$ implies $S_1\cong S_2$. 

Now, let $Y$ be any object and $\epsilon\colon X\epi Y$ a regular epimorphism with $X$ filtral.
By the previous observation, the lattice homomorphism $\epsilon^*\colon \Sub{Y}\mono\Sub{X}$ is injective. The lattice $\Sub{X}\cong \F(\B(X))$ is a co-frame, whence complete. Given $\{S_i\mid i\in I\}\subseteq\Sub{Y}$, 
\[
\exists_{\epsilon}\big(\bigvee\{\epsilon^*(S_i)\mid i\in I\}\big)
\]
is readily seen to be the supremum of the $S_i$'s in $\Sub{Y}$, where $\exists_{\epsilon}$ is the lower adjoint of~$\epsilon^*$. Thus, $\Sub{Y}$ is also a complete lattice.
Further, since $\Sub{X}$ satisfies the co-frame law~\eqref{eq:co-frame}, so does $\Sub{Y}$ (just recall that $\epsilon^*\colon \Sub{Y}\mono\Sub{X}$ preserves all infima).
\end{proof}

For the rest of this section, we fix a category $\X$ satisfying the following properties. 
\begin{assumption}\label{assump:s4}
The category $\X$ is a non-trivial, well-pointed coherent category. Further, the unique morphism $X\to\1$ is a retraction for every $X\not\cong\0$.
\end{assumption}
In view of Lemma~\ref{l:filtral-co-frame}, the functor $\Spec\colon \X\to \Top$ from~\eqref{eq:functor-Spec} is well-defined whenever $\X$ is filtral. The following proposition shows that, in this situation, the functor $\Spec$ takes values in the category of compact Hausdorff spaces. 
\begin{proposition}\label{p:filtrality}
If the category $\X$ is filtral then $\Spec{X}$ is a compact Hausdorff space for every $X$ in $\X$.
\end{proposition}
\begin{proof}
Let $X$ be an arbitrary object of $\X$. If the category $\X$ is filtral, there is a filtral object $Y$ in $\X$ and a regular epimorphism $Y\epi X$.
In view of Lemma~\ref{l:charact-filtral-object}, $\Spec{Y}$ is a Stone space.
By item $2$ in Lemma~\ref{l:points-ex-mono} and Remark~\ref{rm:Spec-closed-maps}, the image under the functor $\Spec$ of the regular epimorphism $Y\epi X$ is a continuous and closed surjection. Since $\Spec{Y}$ is compact and Hausdorff, so is $\Spec{X}$.
\end{proof}
Proposition~\ref{p:filtrality} entails at once that, whenever $\X$ is filtral, the functor $\Spec\colon \X\to\Top$ co-restricts to a functor
\begin{align}\label{eq:functor-Spec-to-KH}
\Spec\colon \X\to \KH.
\end{align}
We conclude this section by recording another consequence of Proposition~\ref{p:filtrality}.
\begin{lemma}\label{l:spec-pres-limits}
If $\X$ is filtral, then the functor $\Spec\colon \X\to\KH$ preserves all limits that exist in $\X$.
\end{lemma}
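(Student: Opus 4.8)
The plan is to show that the functor $\Spec\colon\X\to\KH$ preserves all limits by exploiting the fact that it is already known to be faithful (Theorem \ref{t:Spec-topological-lifting}) and that $\pt=|-|\circ\Spec$ is a hom-functor $\Hom_\X(\1,-)$, which always preserves limits. Since limits in $\KH$ are created by the underlying-set functor $|-|\colon\KH\to\Set$ (limits of compact Hausdorff spaces are formed by taking the limit in $\Set$ and equipping it with the subspace topology inherited from the product), it should suffice to check two things: first, that $\Spec$ sends a limit cone in $\X$ to a cone in $\KH$ whose underlying set-cone is a limit cone in $\Set$; and second, that the topology on the apex of the $\Spec$-image cone coincides with the limit topology.

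First I would record that $\pt=\Hom_\X(\1,-)$ preserves all limits, being a representable functor; this is the engine of the argument. Given a limit $L=\lim_i X_i$ in $\X$ with projection cone $(\pi_i\colon L\to X_i)$, applying $\pt$ yields a limit cone $(\pt{\pi_i})$ in $\Set$, so that $\pt{L}\cong\lim_i\pt{X_i}$ as sets. Composing with $|-|$, the set underlying $\Spec{L}$ is canonically the limit, in $\Set$, of the underlying sets of the $\Spec{X_i}$. Thus the comparison map $\Spec{L}\to\lim_i\Spec{X_i}$ (the latter limit formed in $\KH$, which exists since $\KH$ is complete) is a continuous bijection.

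The remaining — and only substantive — step is to verify that this continuous bijection is a homeomorphism, equivalently that it is a closed map, equivalently that the topology on $\Spec{L}$ agrees with the subspace topology pulled back from the product $\prod_i\Spec{X_i}$. Here I would invoke the crucial fact, available by Lemma \ref{l:filtrality}, that every $\Spec{X}$ is compact Hausdorff, so $\Spec{L}$ is itself a compact space and $\lim_i\Spec{X_i}$ is Hausdorff. A continuous bijection from a compact space to a Hausdorff space is automatically a homeomorphism. Hence the comparison map is an isomorphism in $\KH$, and $\Spec$ preserves the limit.

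I expect the main obstacle to lie in making the identification of underlying sets precise and ensuring the comparison map is genuinely the canonical one: one must check that the cone $(\Spec{\pi_i})$ is the image under $\Spec$ of the limit cone and that its underlying-set cone coincides with the $\Set$-limit cone obtained from $\pt$, so that the compact-to-Hausdorff argument applies to the correct map. Once the compactness of $\Spec{L}$ and the Hausdorffness of the target are in place, the topological identification is immediate; the care is entirely in the bookkeeping that routes everything through the faithful, limit-compatible functor $\pt$.
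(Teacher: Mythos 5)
Your proposal is correct and is essentially the paper's own argument, merely unpacked: the paper also factors $\Spec$ through $\pt=|-|\circ\Spec$, notes that $\pt$ preserves limits as a representable functor, and concludes via the fact that $|-|\colon\KH\to\Set$ is limit-preserving and conservative (precisely because a continuous bijection between compact Hausdorff spaces is a homeomorphism), hence reflects limits. Your explicit comparison-map-plus-compactness argument is the concrete form of that same conservativity step, so there is no substantive difference.
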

\begin{proof}
Consider the following commutative diagram of functors.
\[\begin{tikzcd}[column sep=12pt]
{} & \X \arrow{dl}[swap]{\Spec} \arrow{dr}{\pt} & \\
\KH \arrow{rr}{|-|} & & \Set
\end{tikzcd}\]
The underlying-set functor $|-|\colon \KH\to\Set$ preserves all limits because it is represented by the one-point space. Further, it is \emph{conservative} (i.e., it reflects isomorphisms), since any continuous bijection between compact Hausdorff spaces is a homeomorphism. A conservative functor reflects all the limits it preserves, whence $|-|$ reflects all limits. Since a limit in $\X$ is preserved by $\pt$, it must also be preserved by $\Spec$.
\end{proof}
%

\section{The main result}\label{s:main}
The aim of this section is to prove our main result, that is the following characterisation of the category $\KH$ of compact Hausdorff spaces and continuous maps. (Recall that all categories under consideration are assumed to be well-powered.)
\begin{theorem}\label{t:main-characterisation-bis}
Up to equivalence, $\KH$ is the unique non-trivial pretopos which is well-pointed, filtral and admits all set-indexed copowers of its terminal object. 
\end{theorem}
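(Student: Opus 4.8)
The plan is to prove that the functor $\Spec\colon\X\to\KH$ of \eqref{eq:functor-Spec-to-KH} is an equivalence of categories; the characterisation then follows, since any category satisfying the hypotheses becomes equivalent to $\KH$. First I would check that we are genuinely in the setting of the previous sections, i.e.\ that every non-initial object has a point: if $X$ had no point then, by well-pointedness, the two coproduct injections $X\rightrightarrows X+X$ would be forced to coincide, whence $X\cong\0$ by disjointness of coproducts (extensivity). So $!\colon X\to\1$ is a retraction for $X\not\cong\0$ and $\Spec$ is available. Of the four properties of an equivalence, three are already in hand: $\Spec$ is faithful (Theorem~\ref{t:Spec-topological-lifting}), preserves finite limits (Lemma~\ref{l:spec-pres-limits}), and sends regular epimorphisms to surjections, i.e.\ to regular epimorphisms of $\KH$ (item~$2$ of Lemma~\ref{l:points-ex-mono}). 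It remains to prove fullness and essential surjectivity, and it is exactly here that exactness, extensivity and the copowers $\sum_{I}\1$ enter. I will use two short consequences of ``enough points'': that $\Vs$ restricts to an isomorphism $\B(X)\cong\mathrm{Clop}(\Spec X)$ for every $X$ (a complemented subobject with no points is initial), and that $\Spec$ reflects isomorphisms between subobjects (if $m\colon A\mono B$ has $\Spec m$ an isomorphism, then pulling $m$ back along a filtral cover $F\epi B$, using that the atoms of $\Sub F$ are its points and that $\Sub F$ is atomic by Lemma~\ref{l:filtrality}, together with the injectivity of pullback along a regular epimorphism from Lemma~\ref{l:filtral-co-frame}, forces $A\cong B$).

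The crux is to compute $\Spec$ on copowers: \emph{$\Spec(\sum_{I}\1)\cong\beta I$, the Stone--\v{C}ech compactification of the discrete space $I$.} Extensivity identifies $\B(\sum_{I}\1)$ with $\P(I)$, since a complemented subobject of a coproduct is a subcoproduct---$\1$, having no non-trivial subobjects, is indecomposable---so by the first auxiliary fact the clopen algebra of $\Spec(\sum_{I}\1)$ is $\P(I)$. As the injections $\iota_i\colon\1\to\sum_{I}\1$ are jointly epimorphic they generate the top subobject, hence $\{\iota_i\}$ is dense in the compact Hausdorff space $\Spec(\sum_{I}\1)$ (Lemma~\ref{l:filtrality}); verifying that this space is moreover totally disconnected identifies it with the Stone dual of $\P(I)$, namely $\beta I$. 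This identification---equivalently, that copowers of $\1$ are filtral---is the main obstacle, and the one step to be carried out with care. Granting it, the universal properties of the copower and of $\beta I$ yield, for every $Y$, natural bijections $\Hom_{\X}(\sum_{I}\1,Y)\cong(\pt Y)^{I}\cong\Hom_{\Set}(I,|\Spec Y|)\cong\Hom_{\KH}(\beta I,\Spec Y)$ realised by $\Spec$ itself, so $\Spec$ is fully faithful on all copower sources.

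Fullness in general then follows by descent. Each object $X$ carries the canonical morphism $c\colon\sum_{\pt X}\1\to X$, epimorphic by well-pointedness; its image factorisation has monic part $n$ with $\Spec n$ a continuous bijection (injective as a mono, surjective because every point of $X$ factors through $c$), so $n$ is an isomorphism by the second auxiliary fact and $c$ is a \emph{regular} epimorphism. Given $h\colon\Spec X\to\Spec Y$, the copower bijection above produces $g\colon\sum_{\pt X}\1\to Y$ with $\Spec g=h\circ\Spec c$; since $\Spec$ preserves the kernel pair of $c$ and is faithful, $g$ coequalises that kernel pair, hence factors as $g=\bar g\circ c$ with $\Spec\bar g=h$ ($\Spec c$ being epic). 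Thus $\Spec$ is full, and so fully faithful.

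For essential surjectivity, fix $K\in\KH$ and put $A=\sum_{|K|}\1$, so $\Spec A\cong\beta|K|$ and the counit gives a surjection $\beta|K|\epi K$. Its kernel pair is a closed equivalence relation $E$ on $\beta|K|$, that is, a closed subspace of $\beta|K|\times\beta|K|\cong\Spec(A\times A)$. Surjectivity of $\Vs$ on $A\times A$ realises $E$ as a subobject $E_0\mono A\times A$ with $\Spec E_0\cong E$; fullness lifts the two projections to $E_0\rightrightarrows A$, and full faithfulness together with preservation of finite limits transports reflexivity, symmetry and transitivity, so $E_0$ is an equivalence relation in $\X$. By \emph{exactness} it is effective; let $A\epi Q$ be its coequaliser. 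Since $\Spec$ preserves this kernel pair and this regular epimorphism, $\Spec Q$ is the coequaliser in $\KH$ of $E\rightrightarrows\beta|K|$, namely $K$, whence $\Spec Q\cong K$. This establishes essential surjectivity and completes the proof that $\Spec\colon\X\to\KH$ is an equivalence; uniqueness up to equivalence is then immediate.
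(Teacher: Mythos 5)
Your overall architecture (prove $\Spec$ fully faithful and essentially surjective directly, with fullness by descent along the canonical cover and essential surjectivity by lifting a closed equivalence relation and invoking exactness) differs from the paper's, which instead verifies Makkai's criterion for a pretopos morphism to be an equivalence: conservative, full on subobjects, and covering the codomain. Your route is viable in outline, but it stands or falls on the step you yourself single out and then skip: the identification $\Spec(\sum_{I}\1)\cong\beta I$. This is a genuine gap, and the justification you sketch for it does not work. Extensivity in a pretopos governs \emph{finite} coproducts only; the set-indexed copowers $\sum_{I}\1$ are merely assumed to exist, and nothing in the hypotheses makes them disjoint or universal, so ``a complemented subobject of a coproduct is a subcoproduct'' is not available for infinite $I$. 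One can show, using that the injections are join-prime atoms, that every complemented subobject of $\sum_{I}\1$ has the form $\Cg(J)$ for some $J\subseteq I$; but the converse --- that each $\Cg(J)$ is complemented, i.e.\ that $\Cg(J)\wedge\Cg(I\setminus J)=\0$ --- is exactly the hard point. It amounts to disjoint sets of injections having disjoint closures in $\Spec(\sum_{I}\1)$, and the paper needs the co-frame law of Lemma \ref{l:filtral-co-frame} and the lattice homomorphism $k\colon\F(\P(|Y|))\to\Sub{\textstyle\sum_{|Y|}\1}$ to obtain even the embedding $\P(|Y|)\mono\B(\sum_{|Y|}\1)$. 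Moreover, your identification also requires $\sum_{I}\1$ itself to be filtral (equivalently, $\Spec(\sum_{I}\1)$ zero-dimensional with clopen algebra $\P(I)$), which is not among the hypotheses: filtrality only provides a filtral \emph{cover} of the copower, and a continuous image of a Stone space need not be a Stone space.

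The paper's proof is structured precisely to avoid this computation: it takes a filtral cover $X\epi\sum_{|Y|}\1$, embeds $\P(|Y|)$ into $\B(X)$, and reads off a surjection $\Spec{X}\epi\beta(|Y|)\epi Y$ from Stone duality applied to the filtral object $X$; combined with Makkai's criterion this sidesteps both fullness on general hom-sets and any description of $\Spec(\sum_{I}\1)$. If you wish to keep your descent-based fullness and your exactness-based essential surjectivity, you must first supply a complete proof that $\Spec(\sum_{I}\1)\cong\beta I$, or else reroute those arguments through a filtral cover of the copower as the paper does. The remaining ingredients of your proposal --- the existence of points for non-initial objects, the reflection of isomorphisms of subobjects via a filtral cover, and the lifting of closed equivalence relations --- are essentially correct.
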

 We recall the definitions and facts needed to prove the previous theorem.
 A category $\C$ is (\emph{Barr}) \emph{exact} provided it is regular and every internal equivalence relation in $\C$ is \emph{effective}, i.e.\ it coincides with the kernel pair of its coequaliser (see, e.g.,~\cite{BGvO71} or~\cite[Sections~2.5--2.6]{Borceux2}). Exact categories are those in which there is a good correspondence between congruences (i.e., internal equivalence relations) and quotients (i.e., coequalisers). All varieties of Birkhoff algebras and, more generally, all categories which monadic over $\Set$, are exact~\cite[Theorems 3.5.4, 4.3.5]{Borceux2}. Roughly speaking, a pretopos is an exact category in which finite coproducts exist and are ``well-behaved''. The latter property is formalised by the notion of extensivity.
 
\begin{definition}\label{def:extensivity}
A category $\C$ is \emph{extensive} provided it has finite coproducts and the canonical functor \[(\C/X_1) \times (\C/X_2)\to \C/(X_1+X_2)\] is an equivalence for every $X_1,X_2$ in $\C$. 
\end{definition}
In the presence of enough limits, a more intuitive reformulation of this notion is available.
Given two objects $X_1,X_2$ in $\C$, the coproduct $X_1+X_2$ is \emph{universal} if the pullback of the coproduct diagram $X_1\to X_1+X_2 \leftarrow X_2$ along any morphism yields a coproduct diagram. Moreover, recall that the coproduct $X_1+X_2$ is \emph{disjoint} if pulling back a coproduct injection along the other yields the initial object of $\C$.
 \begin{lemma}[{\cite[Proposition 2.14]{CLW93}}]
 If $\C$ has finite coproducts and pullbacks along coproduct injections, then it is extensive if and only if finite coproducts in $\C$ are universal and disjoint.
 \end{lemma}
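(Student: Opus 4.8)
The plan is to recast both sides of the equivalence in terms of a single adjunction between slice categories, and then to read off \emph{universality} and \emph{disjointness} from the invertibility of its counit and unit respectively.

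First I would set up the two comparison functors. The canonical functor of Definition~\ref{def:extensivity} is
\[
F\colon (\C/X_1)\times(\C/X_2)\to \C/(X_1+X_2),\qquad (a_1,a_2)\mapsto a_1+a_2,
\]
where $a_i\colon A_i\to X_i$ and $a_1+a_2\colon A_1+A_2\to X_1+X_2$. Using the hypothesis that pullbacks along the coproduct injections $\iota_i\colon X_i\to X_1+X_2$ exist, I define
\[
G\colon \C/(X_1+X_2)\to (\C/X_1)\times(\C/X_2),\qquad f\mapsto(\iota_1^*f,\iota_2^*f).
\]
I would then check that $F\dashv G$: by the universal property of the coproduct, a morphism $A_1+A_2\to Z$ over $X_1+X_2$ is the same as a pair of morphisms $A_i\to Z$ whose composite to $X_1+X_2$ equals $\iota_i\circ a_i$, and by the universal property of the pullback this is in turn a pair of morphisms $A_i\to\iota_i^*f$ over $X_i$. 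This bijection is natural, giving the adjunction.

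Since a left adjoint is an equivalence exactly when its unit $\eta$ and counit $\epsilon$ are both isomorphisms, extensivity for the pair $X_1,X_2$ is equivalent to invertibility of $\eta$ and $\epsilon$ for this adjunction. The counit $\epsilon_f\colon (\iota_1^*f)+(\iota_2^*f)\to f$ over $X_1+X_2$ is the comparison induced by the two pullback projections; by construction $\epsilon_f$ is invertible precisely when pulling the coproduct diagram $X_1\to X_1+X_2\leftarrow X_2$ back along $f$ yields a coproduct diagram. Quantifying over all $f\colon Z\to X_1+X_2$, invertibility of $\epsilon$ is therefore exactly universality of $X_1+X_2$. Dually, the component of $\eta$ at $(a_1,a_2)$ records whether the square with edges $A_i\to A_1+A_2$, $a_i$, $a_1+a_2$ and $\iota_i$ is a pullback, for all $a_1,a_2$. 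Specialising to $a_1=\mathrm{id}_{X_1}$ and $a_2\colon\0\to X_2$ forces $\iota_1$ to be monic (from the $i=1$ component) and the pullback of $\iota_1$ along $\iota_2$ to be $\0$ (from the $i=2$ component), and symmetrically; together these are exactly disjointness. For the converse, assuming universality and disjointness I would show the general square is a pullback by pulling an arbitrary test cone back along the injections of $A_1+A_2$ — which form a universal coproduct — so as to split it into its $A_1$- and $A_2$-parts, and then using disjointness to annihilate the off-diagonal part. Assembling the two correspondences, $F$ is an equivalence for every pair $X_1,X_2$ if and only if coproducts are both universal and disjoint.

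The hardest step will be the converse half of the unit computation, namely deducing that the general comparison square is a pullback from universality and disjointness. This requires the auxiliary fact that disjoint, universal coproducts have a \emph{strict} initial object (so that the off-diagonal part, which maps to $\0$, is itself initial), together with a careful decomposition of a generalised element of the pullback along the coproduct $A_1+A_2$. The adjunction bookkeeping and the counit--universality correspondence are routine by comparison.
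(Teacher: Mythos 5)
The paper does not prove this lemma at all: it is imported verbatim from \cite[Proposition 2.14]{CLW93} and stamped with \qed, so there is no internal proof to compare against. Your argument is correct and is essentially the standard proof of that result: the canonical functor $(\C/X_1)\times(\C/X_2)\to\C/(X_1+X_2)$ is left adjoint to the pullback-along-injections functor (this is where the hypothesis on pullbacks along $\iota_1,\iota_2$ is used), invertibility of the counit is literally universality, and invertibility of the unit is the statement that the squares comparing $A_i$ with $\iota_i^*(A_1+A_2)$ are pullbacks, which you correctly reduce to disjointness in one direction by specialising to $(\mathrm{id}_{X_1},\,\0\to X_2)$ and, in the other, by splitting a test cone along the universal coproduct $A_1+A_2$ and killing the cross term via strictness of $\0$. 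One small point worth making explicit: the paper's definition of ``disjoint'' asks only that the pullback of one injection along the other be initial, whereas your uniqueness step needs the injections $A_i\to A_1+A_2$ to be monic. This is not a gap, since monicity of injections follows from universality together with the initial-intersection condition (pulling back the coproduct decomposition of $X_1+X_2$ along $\iota_1$ exhibits $X_1$ as $\iota_1^*(\iota_1)+\0$, which forces the kernel pair of $\iota_1$ to be trivial), but the sentence ``together these are exactly disjointness'' should acknowledge that you are using the stronger, standard form of disjointness and deriving it from the paper's weaker one.
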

 \begin{definition}
 A \emph{pretopos} is an exact and extensive category.
 \end{definition}
Pretoposes are often defined as positive and effective coherent categories (see, e.g., \cite[A1.4]{Elephant1}). Here, \emph{positive} means that finite coproducts exist and are disjoint, while an \emph{effective} regular category is what has been called an exact category above. It is not difficult to see that the two definitions are equivalent: disjoint coproducts in a coherent category are universal, and an exact extensive category is automatically coherent. We record this fact for future use:
\begin{lemma}\label{l:pretopos-implies-coherent}
A category $\C$ is a pretopos if, and only if, it is a positive and effective coherent category.
\end{lemma}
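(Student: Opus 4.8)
The statement is an ``if and only if'' whose two implications correspond exactly to the two observations made just before it. My plan is to prove them separately, using throughout that an exact category is precisely a regular effective one, that a coherent category is in particular regular and has a strict initial object (Lemma~\ref{l:cohe-strict-initial}), and that extensivity may be tested through universality and disjointness of finite coproducts (\cite[Proposition~2.14]{CLW93}); the latter criterion applies in both directions since the categories involved have all finite limits, hence in particular pullbacks along coproduct injections.

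First I would treat the implication from pretopos to positive, effective, coherent. If $\C$ is exact and extensive, then exactness yields regularity and effectiveness at once, while extensivity forces finite coproducts to be disjoint, so $\C$ is positive. It remains to establish coherence, i.e.\ that each $\Sub{X}$ has finite joins preserved by the pullback functors. The empty join is the strict initial object $\0\to X$. For a binary join of $S_1,S_2\mono X$, I would form the coproduct $S_1+S_2$ (available by extensivity), take the canonical morphism $S_1+S_2\to X$, and set $S_1\vee S_2$ equal to its image (available by regularity). This is readily seen to be the least upper bound in $\Sub{X}$: it dominates each $S_i$ because $S_i\mono X$ factors through the image, and it is least because any $T\mono X$ with $S_1,S_2\leq T$ receives $S_1+S_2\to X$ by the universal property of the coproduct, so the image factors through $T$. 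Preservation by a pullback functor $f^*$, for $f\colon Z\to X$, then follows by combining pullback-stability of images (regularity) with universality of coproducts (extensivity): $f^*(S_1\vee S_2)$ is the image of $f^*(S_1)+f^*(S_2)\to Z$, which is $f^*(S_1)\vee f^*(S_2)$.

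For the converse, from positive, effective, coherent to pretopos: coherence gives regularity, so together with effectiveness we obtain exactness, and the only real work is deducing extensivity. By the criterion recalled above it suffices to show that finite coproducts are universal and disjoint; disjointness is positivity. For universality, take a coproduct $X_1\to X_1+X_2\leftarrow X_2$ and an arbitrary $f\colon Z\to X_1+X_2$, and put $Z_i=f^*(X_i)$. The pullback functor $f^*\colon\Sub{X_1+X_2}\to\Sub{Z}$ is a homomorphism of bounded distributive lattices, hence preserves meets, joins, top and bottom; and in a positive coherent category the injections realise $X_1,X_2$ as complementary subobjects (their meet is $\0$ by disjointness, their join the top). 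Applying $f^*$ shows $Z_1,Z_2$ are complementary in $\Sub{Z}$, and the two defining squares are pullbacks by construction. The crux --- and the step I expect to be the main obstacle --- is to recognise that a pair of complementary subobjects constitutes a coproduct decomposition $Z\cong Z_1+Z_2$, which amounts to the standard fact that in a positive coherent category the canonical regular epimorphism $S_1+S_2\epi S_1\vee S_2$ is a monomorphism whenever $S_1\wedge S_2=\0$. I would argue this by analysing the kernel pair of $S_1+S_2\to S_1\vee S_2$ and using disjointness of the coproduct together with $S_1\wedge S_2=\0$ to collapse it to the diagonal --- taking care that this not lapse into circularity with universality itself --- or, more expediently, by appealing to the treatment in \cite[Section~A1.4]{Elephant1}. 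Granting it, finite coproducts are universal, so $\C$ is extensive and therefore a pretopos.
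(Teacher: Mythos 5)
Your argument is correct and follows exactly the route the paper has in mind: the lemma is recorded as a standard fact without proof, the surrounding text noting only the two key implications (an exact extensive category is coherent, with joins given by images of coproducts; disjoint coproducts in a coherent category are universal), and these are precisely the steps you carry out. The one nontrivial ingredient --- that a pair of complementary subobjects yields a coproduct decomposition --- is correctly isolated and safely deferred to \cite[Section A1.4]{Elephant1}, where it is proved by the kernel-pair analysis you sketch, using only the subobject-lattice structure and hence avoiding the circularity you rightly flag.
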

\begin{example}\label{ex:examples-pretoposes}
We give some examples of categories that are, or are not, pretoposes.
\begin{itemize}
\item $\Set$ is a pretopos. Its full subcategory on the finite sets is also a pretopos. More generally, every elementary topos is a pretopos.
\item $\KH$ is a pretopos. Exactness follows from the fact that $\KH$ is monadic over $\Set$ \cite[Section~5]{Linton66}, but can also be verified directly.
\item $\BStone$ is a positive coherent category but is not exact, hence it is not a pretopos. To see this, let $\beta([0,1])$ denote the Stone-{\v C}ech compactification of the unit interval with the discrete topology. There is an equivalence relation $R\rightrightarrows \beta([0,1])$ in $\BStone$ which identifies two ultrafilters on $[0,1]$ precisely when they have the same limit. The coequaliser of $R$ in $\BStone$ is the unique morphism to the one-point space, whose kernel pair is the improper relation on $\beta([0,1])$. Hence, $\BStone$ is not exact. Its exact completion coincides with $\KH$ (cf.\ Theorem~\ref{t:exact-completion-stone}).
\end{itemize}
\end{example}
The strategy to prove Theorem~\ref{t:main-characterisation-bis} is the following. We first show that, if $\X$ is a pretopos satisfying the properties in the statement of the theorem, then the functor $\Spec\colon \X\to \KH$ is well-defined and preserves a part of the categorical structure of $\X$. Namely, that of coherent category. We then use this information to show that $\Spec$ is an equivalence of categories.
\begin{lemma}\label{l:spec-functor-defined}
If $\X$ is a non-trivial well-pointed pretopos which is filtral, then the functor $\Spec\colon \X\to \KH$ from~\eqref{eq:functor-Spec-to-KH} is well-defined.
\end{lemma}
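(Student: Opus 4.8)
The statement to prove is Lemma \ref{l:spec-functor-defined}: if $\X$ is a non-trivial well-pointed pretopos which is filtral, then the functor $\Spec\colon \X\to\KH$ is well-defined. Looking back at how $\Spec$ was constructed, it was defined in \eqref{eq:functor-Spec-to-KH} under two hypotheses: that $\X$ is mono-complete (needed for Theorem \ref{t:Spec-topological-lifting}, i.e.\ for the lifting $\pt\to\Spec$ into $\Top$), and that $\X$ is filtral (needed for Lemma \ref{l:filtrality}, which forces the values of $\Spec$ to land in $\KH$ rather than merely $\Top$). So the entire content of the lemma is to verify that a non-trivial well-pointed filtral pretopos satisfies the running hypotheses under which $\Spec$ was built — principally, that it is mono-complete.

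The plan is therefore to check the assumptions in turn. First I would observe that a pretopos is in particular a coherent category (Lemma \ref{l:pretopos-implies-coherent}), so $\X$ is a non-trivial well-pointed coherent category; this is precisely the setting of Assumption \ref{assump:s4} except for the clause that $X\to\1$ is a retraction whenever $X\not\cong\0$. So the first genuine point to establish is that clause. Then I would invoke Lemma \ref{l:filtral-co-frame}, which states that every filtral category is mono-complete, to secure mono-completeness directly from filtrality. With mono-completeness and the full package of Assumption \ref{assump:s4} in hand, Theorem \ref{t:Spec-topological-lifting} gives the lifting $\Spec\colon\X\to\Top$, and then Lemma \ref{l:filtrality} (applicable since $\X$ is filtral) shows $\Spec{X}\in\KH$ for every $X$, so that $\Spec$ co-restricts to $\KH$ exactly as in \eqref{eq:functor-Spec-to-KH}.

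The one step requiring real argument — and the main obstacle — is showing that $!\colon X\to\1$ is a retraction for every $X\not\cong\0$; equivalently (by the discussion preceding Assumption \ref{assump:s4}) that every non-initial object has a point. This is the property that was \emph{assumed} in Sections \ref{s:top-representation} and \ref{s:filtrality} but must now be \emph{derived} from the pretopos-plus-filtrality hypotheses. I expect to deduce it from filtrality: given a non-initial $X$, take a regular epimorphism $Y\epi X$ with $Y$ filtral, so $\Sub{Y}\cong\F(\B(Y))$. Since $X\not\cong\0$ and the initial object is strict (Lemma \ref{l:cohe-strict-initial}), $Y\not\cong\0$ either, so $\B(Y)$ is a non-trivial Boolean algebra; its filter lattice $\F(\B(Y))$ then has atoms (the ultrafilters), which under the isomorphism $\Sub{Y}\cong\F(\B(Y))$ correspond to atoms of $\Sub{Y}$, i.e.\ to points of $Y$ by item $1$ of Lemma \ref{l:points-ex-mono}. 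Hence $Y$ has a point $p\colon\1\to Y$, and composing with the regular epi $Y\epi X$ yields a point of $X$. (A subtlety: item $1$ of Lemma \ref{l:points-ex-mono} was itself proved under Assumption \ref{assump:s4}, so care is needed to avoid circularity; the atoms-are-points identification really rests only on strictness of $\0$ and non-triviality, which are available here, so the argument can be run self-containedly.)

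Once the retraction clause is in place, everything else is a direct chain of previously-established results and there is nothing further to compute. I would close by remarking that this lemma is exactly the bridge that transports all the work of the earlier sections — carried out under standing assumptions — into the pretopos setting of the main theorem, so that the subsequent argument may freely use $\Spec\colon\X\to\KH$ together with the properties of Lemmas \ref{l:pseudo-cc}, \ref{l:spec-pres-limits} and Remark \ref{rm:Spec-closed-maps}.
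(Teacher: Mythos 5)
Your overall reduction is the same as the paper's: pass from pretopos to coherent via Lemma \ref{l:pretopos-implies-coherent}, get mono-completeness from Lemma \ref{l:filtral-co-frame}, and observe that the only clause of Assumption \ref{assump:s4} actually requiring proof is that $!\colon X\to\1$ is a retraction for every $X\not\cong\0$. The problem is in how you establish that clause. You take a filtral cover $Y\epi X$, produce an atom of $\Sub{Y}\cong\F(\B(Y))$ (an ultrafilter of $\B(Y)$), and then invoke item $1$ of Lemma \ref{l:points-ex-mono} to turn that atom into a point of $Y$. But the direction of that lemma you need --- that an atom of $\Sub{Y}$, i.e.\ a non-initial object with no non-trivial subobjects, is isomorphic to $\1$ --- is proved in the paper by \emph{first producing a point of that object using the retraction clause}, which is exactly what you are trying to establish. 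Your parenthetical defence, that this direction ``rests only on strictness of $\0$ and non-triviality'', is not accurate: strictness and non-triviality let you conclude $A\cong\1$ \emph{once you already have a mono $\1\to A$}, but they give you no morphism $\1\to A$ to begin with. So the argument as written is circular.

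The paper closes this gap without filtrality at all, and any repair of your route would collapse to its argument: if $\pt{X}=\emptyset$, then well-pointedness forces any two parallel morphisms out of $X$ to coincide, so the unique morphism $\0\cong\sum_{\emptyset}\1\to X$ is an epimorphism; in a pretopos every epimorphism is regular, and $\0\to X$ is also a monomorphism, hence an isomorphism, so $X\cong\0$. Contrapositively every non-initial object has a point. (This is also the only way I see to certify that your atom $A$ has a point, at which stage the detour through ultrafilters of $\B(Y)$ buys nothing.) I recommend replacing your third paragraph with this argument; the rest of your proposal then goes through as stated.
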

\begin{proof}
Assume $\X$ is as in the statement. It suffices to verify that the conditions in Assumption~\ref{assump:s4} are satisfied, for then it will follow by Proposition~\ref{p:filtrality} that the functor $\Spec\colon \X\to \KH$ is well-defined. In view of Lemma~\ref{l:pretopos-implies-coherent}, it is enough to show that the unique morphism $X\to\1$ is a retraction whenever $X\not\cong\0$.

Suppose $X$ satisfies $\pt{X}=\emptyset$. By well-pointedness of $\X$ there is an epimorphism $\0\cong\sum_{\emptyset}{\1}\to X$. Because every epimorphism in a pretopos is regular (cf., e.g., \cite[Corollary A.1.4.9]{Elephant1}), the unique morphism $\0\to X$ is both a monomorphism and a regular epimorphism, whence an isomorphism. That is, $X\cong \0$.
\end{proof}
For the next lemma, recall that a \emph{coherent functor} is a functor between coherent categories which preserves finite limits, regular epimorphisms and finite joins of subobjects. 
\begin{lemma}\label{l:spec-functor-coherent}
If $\X$ is a non-trivial well-pointed pretopos which is filtral, then the functor $\Spec\colon \X\to \KH$ is coherent.
\end{lemma}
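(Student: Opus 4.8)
The plan is to verify the three defining conditions of a coherent functor separately: preservation of finite limits, of regular epimorphisms, and of finite joins of subobjects. The first two follow quickly from results already in hand, so the real work lies in understanding how $\Spec$ acts on lattices of subobjects.

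For finite limits, I would simply invoke Lemma \ref{l:spec-pres-limits}: since $\X$ is filtral, $\Spec$ preserves every limit that exists in $\X$, and a pretopos has all finite limits, so $\Spec$ preserves finite limits. In particular $\Spec$ preserves monomorphisms, a fact I will reuse below. For regular epimorphisms, let $f\colon X\epi Y$ be a regular epi in $\X$. By item $2$ of Lemma \ref{l:points-ex-mono} the underlying function $\pt{f}$ is surjective, so $\Spec{f}$ is a continuous surjection between compact Hausdorff spaces. In $\KH$, whose images are continuous images, a surjection coincides with its own image and is therefore a regular epimorphism; hence $\Spec$ preserves regular epis.

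The substantive point is preservation of finite joins of subobjects. Because $\Spec$ preserves monos, it induces an order-preserving map $\Sub{X}\to\Sub{\Spec{X}}=\K(\Spec{X})$ on subobject lattices, and I claim this map is precisely the surjective lattice homomorphism $\Vs\colon\Sub{X}\epi\K(\Spec{X})$ of \eqref{eq:sub-KSpec}. Indeed, for a subobject $m\colon S\mono X$, the mono $\Spec{m}\colon\Spec{S}\mono\Spec{X}$ has underlying injection $\pt{m}$, whose image in $\pt{X}$ is exactly the set $\Vs(S)$ of points of $X$ factoring through $S$; thus the subobject $\Spec(S)$ of $\Spec{X}$ is the closed set $\Vs(S)$. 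Granting this identification, preservation of finite joins of subobjects by $\Spec$ is exactly the statement that $\Vs$ preserves finite joins, which is Proposition \ref{p:topological-operator} (with $\Vs(\0)=\emptyset$, valid since $\X$ is non-trivial, accounting for the empty join).

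The main obstacle is precisely this identification of the induced subobject map with $\Vs$; once it is established, coherence of $\Spec$ reduces to the already-proved topological character of the closure operator $\cl=\Vs\circ\Cg$, together with the limit- and regular-epi-preservation facts recalled above.
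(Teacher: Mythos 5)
Your proof is correct, and while the treatment of finite limits and regular epimorphisms coincides with the paper's, your argument for preservation of finite joins takes a genuinely different route. The paper first proves that $\Spec$ preserves finite coproducts (Claim \ref{claim:spec-fin-coprod}, using disjointness and universality of coproducts in the pretopos $\X$), then realises $S_1\vee S_2$ as the image of the canonical morphism $S_1+S_2\to X$ and uses the fact that $\Spec$, preserving regular epis and monos, preserves (regular epi, mono) factorisations. You instead identify the induced map on subobject lattices with the map $\Vs$ of \eqref{eq:sub-KSpec} --- an identification that does hold, since $\Spec{m}$ is a continuous injection from a compact space into a Hausdorff one whose image is exactly $\Vs(S)$, and two monos in $\KH$ with the same image represent the same subobject --- and then invoke Proposition \ref{p:topological-operator}, i.e.\ the join-primeness of atoms in the distributive lattice $\Sub{X}$. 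Your route is arguably more economical and more general for this particular statement: it uses only the machinery of Section \ref{s:top-representation} (well-pointedness, coherence, mono-completeness) and never touches extensivity, which matters when these arguments are revisited in the merely positive-coherent setting of Section \ref{s:decidable}. What the paper's route buys in exchange is the finite-coproduct-preservation claim itself, which is reused later (notably in Lemma \ref{l:preserves-decidability} and in the proof of item (ii) and the Claim within the proof of Theorem \ref{t:main-characterisation-bis}), so it is not wasted effort there. Both arguments are sound.
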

\begin{proof}
The functor $\Spec\colon\X\to\KH$ preserves finite limits by Lemma~\ref{l:spec-pres-limits}. Regular epis in $\KH$ are simply continuous surjective functions, therefore $\Spec$ preserves regular epis by item $2$ in Lemma~\ref{l:points-ex-mono}. It remains to prove that $\Spec$ preserves finite joins of subobjects. We first prove the following fact:
\begin{claim}\label{claim:spec-fin-coprod}
The functor $\Spec$ preserves finite coproducts.
\end{claim}
\begin{proof}
Since the initial object of $\X$ is strict, we have $\Spec{\0}=\emptyset$. It thus suffices to prove that $\Spec{(X+Y)}\cong \Spec{X}+\Spec{Y}$ whenever $X,Y$ are objects of $\X$. At the level of underlying sets, the obvious function \[\pt{X}+\pt{Y}\to \pt{(X+Y)}\] is injective because finite coproducts in $\X$ are disjoint. On the other hand, surjectivity follows from the universality of coproducts. To prove that this bijection is a homeomorphism, we have to show that every subobject of $X+Y$ splits as the coproduct of a subobject of $X$ and a subobject of $Y$. In turn, this follows again from the universality of finite coproducts in $\X$. Just observe that the pullback of the coproduct $X\to X+Y \leftarrow Y$ along a subobject $S\mono X+Y$ yields a splitting of $S$ of the form $S\cong S_1+S_2$, with $S_1\in\Sub{X}$ and $S_2\in\Sub{Y}$.
\end{proof}
To conclude the proof of the lemma, consider $X$ in $\X$ and two subobjects $S_1\mono X$ and $S_2\mono X$. We want to show that $\Spec{(S_1\vee S_2)}\cong \Spec{S_1}\vee \Spec{S_2}$. Let \[j\colon S_1+ S_2\to X\] be the coproduct of the two subobjects. By the previous claim, the functor $\Spec$ preserves finite coproducts, thus $\Spec{j}\colon \Spec{(S_1+S_2)}\to \Spec{X}$ is the coproduct of the subobjects $\Spec{S_1}\mono \Spec{X}$ and $\Spec{S_2}\mono \Spec{X}$. The subobject $S_1\vee S_2\mono X$ is obtained by taking the image, i.e.\ the (regular epi, mono) factorisation, of $j$. Since $\Spec$ preserves regular epis and monos by the first part of the proof, the image under $\Spec$ of the (regular epi, mono) factorisation of $j$ is the (regular epi, mono) factorisation of $\Spec{j}$. Hence, 
$\Spec{(S_1\vee S_2)}\cong \Spec{S_1}\vee \Spec{S_2}$.
\end{proof}
The last ingredient we need in order to prove Theorem~\ref{t:main-characterisation-bis} is the following proposition, due to Makkai.
Suppose $\C,\D$ are coherent categories. A coherent functor $F\colon \C\to\D$ is \emph{full on subobjects} if, for any $X$ in $\C$, the induced lattice homomorphism $\Sub{X}\to\Sub{FX}$ is surjective (the latter map is well-defined because coherent functors preserve finite limits and, in particular, monomorphisms). The functor $F$ \emph{covers} $\D$ if, for each $Y$ in $\D$, there exist $X$ in $\C$ and a regular epimorphism $FX\epi Y$ in $\D$.
Moreover, $F$ is \emph{conservative} if it reflects isomorphisms.
Finally, a \emph{morphism of pretoposes} is a functor between pretoposes which preserves finite limits, finite coproducts, and coequalisers of internal equivalence relations.
\begin{proposition}[{\cite[Proposition 2.4.4 and Lemma 2.4.6]{Makkai85}}]\label{p:pretopos-morphisms}
The following statements hold:
\begin{enumerate}
\item any coherent functor between pretoposes is a morphism of pretoposes;
\item a morphism of pretoposes is an equivalence if, and only if, it is conservative, full on subobjects, and covers its codomain.
\end{enumerate}
\end{proposition}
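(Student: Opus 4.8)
The plan is to prove the two parts separately. Throughout, let $F\colon\C\to\D$ be a functor between pretoposes.

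\smallskip
\emph{Part (1).} A morphism of pretoposes must preserve finite limits, finite coproducts, and coequalisers of equivalence relations. A coherent $F$ preserves finite limits by definition, so two things remain. For finite coproducts, I would first observe that the induced maps $\Sub{X}\to\Sub{FX}$ preserve finite meets (finite limits), finite joins (coherence), and hence the top and bottom elements; in particular $F\1\cong\1$ and, since the empty join is the bottom, $F\0\cong\0$, which disposes of the empty coproduct. Because each $\Sub{X}$ is distributive (Lemma \ref{l:posets-coherent-distributive-lattice}), complements are unique, so $F$ also preserves complemented subobjects and their complements. Now in the pretopos $\C$ the two injections of $X_1+X_2$ are, by extensivity (Definition \ref{def:extensivity}), mutually complementary complemented subobjects covering $X_1+X_2$; applying $F$ yields mutually complementary complemented subobjects of $F(X_1+X_2)$, and extensivity of $\D$ (universality and disjointness of coproducts) reassembles any such pair into a binary coproduct. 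Tracking the construction shows that the canonical comparison $FX_1+FX_2\to F(X_1+X_2)$ is the resulting isomorphism. Preservation of quotients is cleaner and uses exactness of the codomain: if $q\colon X\epi Q$ is the coequaliser of an equivalence relation $R\rightrightarrows X$, then $Fq$ is a regular epimorphism (coherence) whose kernel pair is $FR$ (finite limits); since $\D$ is exact, $Fq$ is the coequaliser of its kernel pair $FR\rightrightarrows FX$, as required.

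\smallskip
\emph{Part (2).} The forward implication is immediate: an equivalence reflects isomorphisms, induces isomorphisms on subobject lattices (so is full on subobjects), and is essentially surjective (so covers $\D$). For the converse, assume $F$ is a morphism of pretoposes that is conservative, full on subobjects and covers $\D$; I would show it is faithful, full and essentially surjective. Faithfulness is the standard argument for a conservative, finite-limit-preserving functor: if $Ff=Fg$, the equaliser of $f,g$ is sent to an isomorphism, hence was one, so $f=g$. For fullness, given $f\colon FX\to FY$ its graph is a subobject of $F(X\times Y)$, which, since $F$ is full on subobjects, has the form $FG$ for some $G\mono X\times Y$; the first projection $G\to X$ becomes an isomorphism after $F$, hence is one by conservativity, so $G$ is the graph of a morphism $g\colon X\to Y$, and $Fg=f$ since $F$ preserves the graph construction.

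\smallskip
The step I expect to be the main obstacle is essential surjectivity, where exactness of the domain and preservation of quotients combine. Given $Y$ in $\D$, use that $F$ covers $\D$ to choose a regular epimorphism $e\colon FX\epi Y$, which in the exact category $\D$ is the coequaliser of its kernel pair $R\rightrightarrows FX$. Here $R$ is a subobject of $FX\times FX\cong F(X\times X)$, so, since $F$ is full on subobjects, $R\cong FR'$ for some $R'\mono X\times X$. The work is to descend the equivalence-relation structure from $R$ to $R'$: since $F$ preserves finite meets and reflects isomorphisms, it reflects the order on subobjects, and it preserves the finite-limit and image constructions encoding reflexivity, symmetry and transitivity, so these properties of $R=FR'$ pass to $R'$. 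Exactness of $\C$ then yields the quotient $q\colon X\epi Q$ with kernel pair $R'$, and since $F$ preserves coequalisers of equivalence relations by Part (1), $Fq$ is the coequaliser of $FR'\rightrightarrows FX$, i.e.\ of the pair $R\rightrightarrows FX$ defining $e$. By uniqueness of coequalisers $Y\cong FQ$, so $F$ is essentially surjective. Together with fullness and faithfulness, this shows $F$ is an equivalence.
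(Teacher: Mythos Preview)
The paper does not supply a proof of this proposition; it is quoted from Makkai (note the \texttt{\textbackslash qed} immediately after the enumeration). So there is no in-paper argument to compare against, and your proposal must be judged on its own merits.

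Your argument is correct and follows the standard line. A few remarks to tighten it. In Part~(1), your reduction of binary coproducts to complemented subobjects is the right idea; the one step you should make explicit is why, in the target pretopos $\D$, two complementary subobjects $S_1,S_2\hookrightarrow Z$ force $Z\cong S_1+S_2$: the canonical map $S_1+S_2\to Z$ is mono (compute its kernel pair using universality of coproducts and $S_1\wedge S_2=\0$) and has image $S_1\vee S_2=Z$, hence is an isomorphism. In Part~(2), your claim that $F$ \emph{reflects} the order on subobjects deserves one line: from $F(A)\leq F(B)$ one gets $F(A\wedge B)=F(A)$, so $F$ sends the inclusion $A\wedge B\hookrightarrow A$ to an isomorphism, and conservativity gives $A\leq B$; this also yields injectivity of $\Sub X\to\Sub FX$, which together with fullness on subobjects makes it a lattice isomorphism---exactly what you need to descend reflexivity, symmetry and transitivity to $R'$. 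Finally, for the fullness step, it is worth noting that the two monos $(\mathrm{id},Fg)$ and $(\mathrm{id},f)$ from $FX$ into $FX\times FY$ have the same image and the same first projection, hence coincide; this pins down $Fg=f$ without ambiguity.
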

We are now ready for the proof of our main result.
\begin{proof}[of Theorem~\ref{t:main-characterisation-bis}]
The category $\KH$ is a non-trivial pretopos which is well-pointed and filtral (cf.\ Examples~\ref{ex:KH-filtral} and~\ref{ex:examples-pretoposes}). Further, for any set $I$, the copower $\sum_{I}{\1}$ in $\KH$ coincides with $\beta(I)$, the Stone-{\v C}ech compactification of the discrete space $I$. Hence, $\KH$ admits all set-indexed copowers of its terminal object. To show that, up to equivalence, it is the unique such category, let $\X$ be a pretopos satisfying these properties.
By Lemma~\ref{l:spec-functor-coherent} and Proposition~\ref{p:pretopos-morphisms}, in order to show that the functor $\Spec\colon \X\to \KH$ is an equivalence, it suffices to prove that it is (i) conservative, (ii) full on subobjects and (iii) it covers $\KH$. 

\begin{enumerate}[wide, labelwidth=!, labelindent=10pt]\renewcommand{\labelenumi}{(\roman{enumi})}
\item The functor $\Spec\colon\X\to\KH$ is faithful because so is $\pt\colon \X\to\Set$ (cf.\ Lemma~\ref{l:pt-well-defined-and-faithful}). Therefore, it reflects both epis and monos. Since a pretopos is balanced (see, e.g.,~\cite[Corollary A.1.4.9]{Elephant1}) $\Spec$ reflects isomorphisms, i.e., it is conservative.
\item Recall that monomorphisms in $\KH$ are inclusions of closed subsets. Whence, $\Spec$ is full on subobjects by equation~\eqref{eq:sub-KSpec}.
\item Consider any compact Hausdorff space $Y$, and write $|Y|$ for its underlying set. By assumption, the $|Y|$-fold copower of $\1$ exists in $\X$. Moreover, by filtrality of $\X$, there is a regular epimorphism $X\epi \sum_{|Y|}{\1}$ in $\X$ with $X$ filtral. We prove the following useful fact:
\end{enumerate}
\begin{claim}
$\P(|Y|)$ is isomorphic to a Boolean subalgebra of $\B(X)$, the Boolean center of $\Sub{X}$.
\end{claim}
\begin{proof}[of Claim]
The pullback functor associated with the regular epi $X\epi \sum_{|Y|}{\1}$ is an injective lattice homomorphism $\Sub\sum_{|Y|}{\1}\mono \Sub{X}$ (cf.\ the proof of Lemma~\ref{l:filtral-co-frame}). Hence, its restriction to the Boolean center of $\Sub{\sum_{|Y|}{\1}}$ yields an injective Boolean algebra homomorphism  $\B(\sum_{|Y|}{\1})\mono \B(X)$. Therefore, it suffices to prove that $\P(|Y|)$ can be identified with a Boolean subalgebra of $\B(\sum_{|Y|}{\1})$.

To this end, consider the map
\[
k\colon \F(\P(|Y|))\to \Sub{\sum_{|Y|}{\1}}, \ \ k(F)=\bigwedge{\{S\in \Sub{\sum_{|Y|}{\1}}\mid \pt{S}\cap |Y|\in F\}}.
\]
We claim that $k$ is a bounded lattice homomorphism. Upon recalling that elements of $\F(\P(|Y|))$ are ordered by reverse inclusion,  for every $F_1,F_2\in \F(\P(|Y|))$ we have 
\begin{align*}
k(F_1\vee F_2)&= \bigwedge{\{S\in \Sub{\sum_{|Y|}{\1}}\mid \pt{S}\cap |Y|\in F_1\cap F_2\}} \\
&= \bigwedge{\{S_1\vee S_2\mid S_1,S_2\in \Sub{\sum_{|Y|}{\1}}, \ \pt{S_1}\cap |Y|\in F_1 \ \text{and} \ \pt{S_2}\cap|Y|\in F_2\}}.
\end{align*}
By Lemma~\ref{l:filtral-co-frame}, the co-frame law~\eqref{eq:co-frame} holds in $\Sub{\sum_{|Y|}{\1}}$. Thus, the latter infimum coincides with
\[
\bigwedge{\{S\in \Sub{\sum_{|Y|}{\1}}\mid \pt{S}\cap |Y|\in F_1\}}\vee \bigwedge{\{S\in \Sub{\sum_{|Y|}{\1}}\mid \pt{S}\cap |Y|\in F_2\}}
\]
which, by definition, is equal to $k(F_1)\vee k(F_2)$.
Further, $k(\P(|Y|))=\0$. This shows that $k$ preserves finite suprema. A straightforward computation shows that $k$ preserves also finite infima, whence it is a bounded lattice homomorphism. The restriction of $k$ to the Boolean center of $\F(\P(|Y|))$, which is isomorphic to $\P(|Y|)$, gives a Boolean algebra homomorphism $\P(|Y|)\to \B\big(\sum_{|Y|}{\1}\big)$ which is injective. Just observe that $T\in \P(|Y|)$ is sent to the complemented subobject 
\[
\bigwedge{\{S\in \Sub{\sum_{|Y|}{\1}}\mid T\subseteq \pt{S}\}}=\Cg(T),
\]
which coincides with the initial object $\0$ if, and only if, $T=\emptyset$.
\end{proof}
Since $X$ is filtral, the space $\Spec{X}$ is homeomorphic to the Stone space dual to $\B(X)$ (cf.\ the proof of Lemma~\ref{l:charact-filtral-object}). Hence, the previous claim entails that the Stone-\v{C}ech compactification $\beta(|Y|)$ of the discrete space $|Y|$ is a continuous image of $\Spec{X}$. In turn, by the universal property of the Stone-{\v C}ech compactification, the identity function $|Y|\to Y$ lifts to a continuous surjection $\beta(|Y|)\epi Y$. Composing the two maps, we obtain a continuous surjection $\Spec{X}\epi Y$. This shows that the functor $\Spec$ covers $\KH$.
\end{proof}
We remark that the hypotheses of Theorem~\ref{t:main-characterisation-bis} are independent from each other. 
Below, we give examples of categories which satisfy all the assumptions of the theorem, except for the one in parentheses:
\begin{itemize}
\item $\{\0\cong\1\}$ (non-trivial);
\item $\{\0\to\1\}$ (extensive);
\item $\BStone$ (exact);
\item $\KH/\{0,1\}$, where $\{0,1\}$ is equipped with the discrete topology (well-pointed);
\item $\Set$ (filtral);
\item $\Setfin$ (admits all set-indexed copowers of its terminal object).
\end{itemize}
The fact that $\KH/\{0,1\}$ satisfies all the hypotheses in Theorem~\ref{t:main-characterisation-bis} except for well-pointedness is a consequence of the following observation.
\begin{lemma}
Let $X$ be any compact Hausdorff space with at least two distinct points. Then the slice category $\KH/X$ is a non-trivial pretopos which is filtral and admits all coproducts, but is not well-pointed. 
\end{lemma}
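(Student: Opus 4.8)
The plan is to treat $\KH/X$ entirely through the forgetful functor $U\colon\KH/X\to\KH$ sending $(A\to X)$ to $A$, and to observe that every relevant piece of structure is computed, via $U$, inside $\KH$. Since $\KH$ is a pretopos (Example \ref{ex:examples-pretoposes}), I would first record that $\KH/X$ is a pretopos by the standard stability of pretoposes under slicing. Concretely: $\KH/X$ has finite limits, with terminal object $\mathrm{id}_X$, binary products given by pullback over $X$, and equalisers as in $\KH$; the functor $U$ creates colimits, so finite coproducts are computed in $\KH$ and remain disjoint and universal there, whence $\KH/X$ is extensive (alternatively via \cite[Proposition 2.14]{CLW93}, or via the identification $(\KH/X)/(A\to X)\cong\KH/A$); and $U$ preserves and reflects monomorphisms, regular epimorphisms, image factorisations and coequalisers of equivalence relations, so regularity and exactness descend from $\KH$. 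Thus $\KH/X$ is exact and extensive, i.e.\ a pretopos (see also \cite{Elephant1} for stability under slicing).

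Non-triviality and the existence of coproducts are then immediate. The initial object of $\KH/X$ is $(\emptyset\to X)$ and the terminal object is $\mathrm{id}_X$; these are isomorphic iff $X\cong\emptyset$, and since $X$ has at least two points it is non-empty, so $\0\not\cong\1$ and the category is non-trivial. All set-indexed coproducts exist because $U$ creates colimits and $\KH$ is cocomplete, the coproduct of a family $(A_i\to X)$ being $\left(\sum_i A_i\to X\right)$.

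For filtrality the key observation is that $U$ preserves and reflects monos, so that for any object $(f\colon A\to X)$ it induces a lattice isomorphism $\Sub{(f)}\cong\Sub{A}=\K(A)$ (the structure map of a subobject being forced). Hence the Boolean centre $\B(f)$ is the algebra of clopen subsets of $A$, and by the analysis in Example \ref{ex:KH-filtral} the object $(f\colon A\to X)$ is filtral precisely when $A$ is a Stone space. To cover an arbitrary $(f\colon A\to X)$, I would pick a continuous surjection $q\colon B\epi A$ from a Stone space, for instance $B=\beta(|A|)$ with $|A|$ discrete, and note that $q$ is a morphism $(f\circ q\colon B\to X)\to(f\colon A\to X)$ in $\KH/X$ which is a regular epimorphism (being surjective) and whose domain is filtral. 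Thus $\KH/X$ is filtral.

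It remains to show that $\KH/X$ is \emph{not} well-pointed, which is where the hypothesis that $X$ has at least two points is used and which is the crux of the statement. A point of $(f\colon A\to X)$ is a morphism out of $\mathrm{id}_X$, that is, a continuous section of $f$. Fixing $x_0\in X$, consider the object $Z=(\{\ast\}\to X)$ picking out $x_0$; its underlying space is a singleton, so $Z\not\cong\0$. A section of $\{\ast\}\to X$ would be a map $X\to\{\ast\}$ whose composite with $\{\ast\}\to X$ is $\mathrm{id}_X$, but this composite is the constant map at $x_0$, which differs from $\mathrm{id}_X$ since $X$ has at least two points; hence $\pt{Z}=\emptyset$ while $Z$ is non-initial. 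This already breaks well-pointedness by the argument of Lemma \ref{l:spec-functor-defined}: were $\pt$ faithful, the canonical morphism $\sum_{\pt{Z}}\1=\0\to Z$ would be an epimorphism, hence both a regular epi and a mono in the pretopos $\KH/X$, forcing $Z\cong\0$. Equivalently and more directly, the two distinct morphisms $Z\to(X\times\{0,1\}\to X)$ selecting the two points of the fibre over $x_0$ agree vacuously on the empty set $\pt{Z}$, so $\pt$ is not faithful. I expect the only delicate points to be the bookkeeping identifying subobjects, Boolean centres and regular epimorphisms in $\KH/X$ with their $\KH$-counterparts, and the verification that the two-point hypothesis is exactly what makes $\pt{Z}$ empty and hence destroys faithfulness.
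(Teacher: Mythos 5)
Your proposal is correct and follows essentially the same route as the paper: pretopos structure by stability of exactness and extensivity under slicing, non-triviality and coproducts computed via the forgetful functor, filtrality by covering $(f\colon A\to X)$ with a Stone space over $X$ using the isomorphism $\Sub_{\KH/X}(f)\cong\Sub_{\KH}(A)$, and failure of well-pointedness from a non-initial object with no points (your singleton $\{\ast\}\to X$ is just a concrete instance of the paper's non-surjective map into $X$, and the paper likewise invokes Lemma \ref{l:positive-has-points} to conclude). All the steps you flag as bookkeeping are indeed routine and are treated the same way in the paper.
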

\begin{proof}
Let $X$ be as in the statement. The slice category $\KH/X$ is easily seen to be extensive (cf.~\cite[Proposition 4.8]{CLW93}). Moreover, any slice category of an exact category is exact. For a proof see, e.g.,~\cite[p.~435]{BB2004}. Thus, $\KH/X$ is a pretopos. The initial and terminal objects in $\KH/X$ are the unique morphism $\emptyset\to X$ and the identity function $X\to X$, respectively. Hence, $\KH/X$ is trivial if, and only if, $X=\emptyset$. This shows that $\KH/X$ is a non-trivial pretopos.

The coproduct in $\KH/X$ of a set of objects $\{Y_i\to X\mid i\in I\}$ is the unique morphism $\sum_{i\in I}{Y_i}\to X$ induced by the universal property of coproducts in $\KH$, whence $\KH/X$ admits all coproducts. To show that $\KH/X$ is filtral, consider an object $f\colon Y\to X$ in $\KH/X$. Since $\KH$ is filtral, there is a filtral compact Hausdorff space $Z$ and a regular epimorphism $\epsilon\colon Z\epi Y$ in $\KH$. The latter is a regular epimorphism in $\KH/X$ from $f\circ \epsilon\colon Z\to X$ to $f\colon Y\to X$. Further, because the lattice of subobjects of $f\circ\epsilon\colon Z\to X$ in $\KH/X$ is isomorphic to the lattice of subobjects of $Z$ in $\KH$, it is not difficult to see that $f\circ \epsilon$ is a filtral object covering $f$. We conclude that $\KH/X$ is a non-trivial pretopos which is filtral and admits all coproducts. It remains to prove that $\KH/X$ is not well-pointed.

Since $X$ has at least two points, there exist a compact Hausdorff space $Y\neq\emptyset$ and a non-surjective continuous function $f\colon Y\to X$. We claim that $f$, regarded as an object of $\KH/X$, has no points. A point of $f$ would be a continuous function $s\colon X\to Y$ such that $f\circ s$ is the identity of $X$. But such a section $s$ cannot exist because $f$ is not surjective.  Thus, $\KH/X$ admits a non-initial object with no points. It is not difficult to see that this implies that $\KH/X$ is not well-pointed (cf.\ Lemma~\ref{l:positive-has-points} below).
\end{proof}
\begin{remark}
A well-known result in category theory (cf.~e.g.~\cite{Vitale1994} or~\cite[Theorem 4.4.5]{Borceux2}) states that a category $\C$ is equivalent to the category $\Set^T$ of Eilenberg-Moore algebras for a monad $T$ on $\Set$ if, and only if, it satisfies the following conditions:
\begin{enumerate}
\item $\C$ is exact;
\item $\C$ has a regular projective generator $G$;
\item for every set $I$, the copower $\sum_{I}{G}$ exists in $\C$.
\end{enumerate}
If these conditions are satisfied, the monad $T$ on $\Set$ can be defined as the one induced by the adjunction 
\[\sum_{-}{G}\dashv \hom_{\C}(G,-)\colon \C\to\Set.\]
In view of this characterisation of categories monadic over $\Set$, the hypotheses of Theorem~\ref{t:main-characterisation-bis} (along with item $2$ in Lemma~\ref{l:points-ex-mono}) readily imply that $\X$ is equivalent to the category $\Set^T$, where $T$ is the monad induced by the adjunction $\sum_{-}{\1}\dashv \pt\colon \X\to \Set$. 
If we knew that $T$ coincides with the ultrafilter monad on $\Set$, by Manes' characterisation of $\KH$ as the category of algebras for the ultrafilter monad~\cite{Manes1976}, we would conclude that $\X\cong\KH$. In a sense, the difficulty in the proof of our main result resides in the fact that an explicit description of the monad $T$ is not available. 
\end{remark}
%

\section{Richter's Theorem}\label{s:Richter-thm} 
In this section we derive Richter's characterisation of $\KH$~\cite[Remark 4.7]{Richter1992} from Theorem~\ref{t:main-characterisation-bis}. 
\begin{definition}[{\cite[Definition 2.1]{Richter1992}}]
Let $\C$ be a category, and $\1$ an object of $\C$ such that the coproduct $\1+\1$ (denoted by $\2$) exists in $\C$. Further, let $o\in\hom_{\C}(\1,\2)$. For any $X$ in $\C$, a subset $\mathcal{U}\subseteq \hom_{\C}(X,\2)$ is called a \emph{$(\1,o)$-cover} of $X$ provided
\[
\bigcup_{d\in \mathcal{U}}{(d\circ-)^{-1}(o)}=\hom_{\C}(\1,X), \ \text{where} \  d\circ -\colon \hom_{\C}(\1,X)\to \hom_{\C}(\1,\2).
\]
The object $X$ is \emph{$(\1,o)$-compact} if every $(\1,o)$-cover of $X$ admits a finite $(\1,o)$-subcover. (Topological intuition: every cover of $X$ consisting of clopen sets admits a finite subcover.)
\end{definition}
\begin{theorem}[{\cite[Remark 4.7]{Richter1992}}]\label{t:Richter}
Suppose $\C$ is a category admitting an object $\1$ such that the following properties are satisfied:
\begin{enumerate}
\item\label{richter1} $\C$ has all set-indexed copowers of $\1$;
\item\label{richter2} $\1$ is a regular generator in $\C$, i.e.\ for every object $X$ of $\C$, the canonical morphism $\sum_{\hom_{\C}(\1,X)}{\1}\to X$ is a regular epimorphism;
\item\label{richter3} $\C$ admits all coequalisers;
\item\label{richter4} every internal equivalence relation in $\C$ is exact;
\item\label{richter5} \begin{enumerate}
\item $\hom_{\C}(\1,\2)=\{\bot\neq\top\}$;
\item for every set $I$, the co-diagonal morphisms
\[\begin{tikzcd}
\sum_{I}{\1} & \sum_{I}{\1}+\sum_{I}{\1}\cong \sum_{I}{\2} \arrow{l} \arrow{r} & \2
\end{tikzcd}\]
are jointly monic;
\item $\2$ is a coseparator for the full subcategory of $\C$ on the set-indexed copowers of $\1$, i.e.\ for any sets $I,J$ and distinct morphisms $\phi_1,\phi_2\in\hom_{\C}(\sum_{I}{\1},\sum_{J}{\1})$, there is $\psi\in\hom_{\C}(\sum_{J}{\1},\2)$ such that $\psi\circ\phi_1\neq\psi\circ\phi_2$;
\end{enumerate}
\item\label{richter6} there is $o\in\hom_{\C}(\1,\2)$ such that, for every set $I$, $\sum_{I}{\1}$ is $(\1,o)$-compact.
\end{enumerate}
Then $\C$ is equivalent to the category $\KH$ of compact Hausdorff spaces and continuous maps.
\end{theorem}
For the remainder of this section, we assume $\C$ is a category satisfying items~\ref{richter1}--\ref{richter6} in Theorem~\ref{t:Richter}. 
\begin{remark}\label{rm:explanations}
We comment on some of the assumptions in Richter's theorem.
\begin{enumerate}[wide, labelwidth=!, labelindent=10pt]\renewcommand{\labelenumi}{(\roman{enumi})}
\item Item~\ref{richter5}$a$ states that there are exactly two morphisms $\bot,\top\colon\1\to\2$. It follows that the object $\2$ admits exactly one non-trivial automorphism $\sigma\colon \2\to\2$. The image of $\sigma$ under the functor $\hom_{\C}(\1,-)$ is the function $\{\bot,\top\}\to\{\bot,\top\}$ which sends $\bot$ to $\top$, and $\top$ to $\bot$. Further, we will see in Lemma~\ref{l:1-terminal} below that $\1$ is terminal in $\C$, hence $\hom_{\C}(\1,\1)$ consists only of the identity morphism. Thus, item~\ref{richter5}$a$ is equivalent to saying that the functor $\hom_{\C}(\1,-)$ preserves the coproduct $\1+\1$.
\item Item~\ref{richter5}$b$ is a weak form of extensivity of the category $\C$. In fact, we shall see in Lemma~\ref{l:pt-cosquares-of-free} below that it entails that the functor $\hom_{\C}(\1,-)$ preserves the coproducts of the form $\sum_{I}{\1}+\sum_{I}{\1}$, for any set $I$. Hence, the coproduct $\sum_{I}{\1}+\sum_{I}{\1}$ in $\C$ is disjoint.
\item Given the existence of the automorphism $\sigma\colon \2\to\2$ described in item~(i) of this remark, we can assume without loss of generality that $o=\top$ in item~\ref{richter6} of Theorem~\ref{t:Richter}.
\end{enumerate}
\end{remark}
\begin{lemma}[{\cite[p.\ 370]{Richter1992}}]\label{l:1-terminal}
The object $\1$ is terminal in the category $\C$.
\end{lemma}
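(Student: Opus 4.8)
The plan is to show that $\1$ is terminal in $\C$, i.e.\ that for every object $X$ of $\C$ there is a unique morphism $X \to \1$. I would work with the defining data of Richter's axioms: $\1$ is a regular generator (item $2$), and $\hom_{\C}(\1,\2) = \{\bot \neq \top\}$ consists of exactly two elements, where $\2 = \1 + \1$ (item $5a$). The key structural fact I expect to exploit is that, by item $5a$, the set $\hom_{\C}(\1,\1)$ has a very restricted size: since $\2 = \1+\1$ is a coproduct with two injections $\iota_1,\iota_2 \colon \1 \to \2$, and these are among the elements of $\hom_{\C}(\1,\2)$, I would first argue that $\bot$ and $\top$ \emph{are} precisely the two coproduct injections (they must be distinct, since a coproduct injection into $\1+\1$ cannot be the other one without collapsing the coproduct in a way incompatible with $5a$).

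\textbf{Reducing to $\hom_{\C}(\1,\1)$ being a singleton.} My first concrete step is to establish that $\hom_{\C}(\1,\1) = \{\mathrm{id}_{\1}\}$. Suppose $a \colon \1 \to \1$ were a morphism. Composing with the two injections, I get morphisms into $\2$; the constraint from item $5a$ that there are only two morphisms $\1 \to \2$ should force $a$ to be the identity. Concretely, I would consider the two composites $\iota_1 \circ a$ and $\iota_2 \circ a$ in $\hom_{\C}(\1,\2)$ and use the fact that $\iota_1,\iota_2$ are jointly the only elements there. Once $\hom_{\C}(\1,\1)$ is a singleton, the next step is to use the regular generator property: for an arbitrary $X$, the canonical morphism $\sum_{\hom_{\C}(\1,X)}{\1} \to X$ is a regular epimorphism. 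The idea is that a morphism $X \to \1$ corresponds, via this presentation, to a compatible family of morphisms $\1 \to \1$ indexed by $\hom_{\C}(\1,X)$; since each such morphism is forced to be the identity, any two morphisms $X \to \1$ must agree after precomposition with the regular epi, hence agree (regular epis are epic).

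\textbf{Existence of the morphism to $\1$.} For existence, I would again use that $\1$ is a regular generator. Given $X$, the presentation $\sum_{\hom_{\C}(\1,X)}{\1} \epi X$ exhibits $X$ as a quotient. The co-diagonal (fold) map $\sum_{\hom_{\C}(\1,X)}{\1} \to \1$ always exists by the universal property of the copower (it is induced by the family of identities $\mathrm{id}_{\1}$). The task is then to show this fold map factors through the regular epimorphism onto $X$, i.e.\ that it coequalises the kernel pair. Here I would invoke item $4$ (exactness of internal equivalence relations) together with the fact that any two parallel morphisms $\1 \to \sum_{\hom_{\C}(\1,X)}{\1}$ that become equal in $X$ also become equal after folding to $\1$ --- which again reduces to $\hom_{\C}(\1,\1)$ being trivial. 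Thus the fold map descends to a morphism $X \to \1$.

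\textbf{The main obstacle.} The delicate point, which I would flag as the crux, is the \emph{existence} half: showing that the fold map $\sum_{\hom_{\C}(\1,X)}{\1} \to \1$ genuinely factors through the quotient $X$, rather than merely arguing uniqueness. This requires controlling the kernel pair of the regular epimorphism $\sum_{\hom_{\C}(\1,X)}{\1} \epi X$ and checking the fold map respects it. The generator property guarantees the epi is regular (so it is the coequaliser of its kernel pair by exactness, item $4$), which is exactly what lets a compatible cocone descend; the verification of compatibility is where the triviality of $\hom_{\C}(\1,\1)$ does the real work, since every element of the kernel pair, being detected by points $\1 \to \cdot$, collapses under folding. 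I would expect this to be the step requiring the most care, whereas the uniqueness argument is essentially immediate once $\hom_{\C}(\1,\1)$ is shown to be a singleton.
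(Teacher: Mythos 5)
The paper does not actually prove this lemma: it simply cites \cite[p.\ 370]{Richter1992}, so your argument has to be judged on its own merits. Its overall shape is right. Everything does reduce to showing $\hom_{\C}(\1,\1)=\{\mathrm{id}_{\1}\}$: once that is known, uniqueness is immediate from item $2$ (two maps $f,g\colon X\to\1$ satisfy $f\circ p=g\circ p$ for every point $p$ of $X$, hence agree after precomposition with the epimorphism $\sum_{\hom_{\C}(\1,X)}{\1}\to X$), and existence follows because a regular epimorphism is by definition the coequaliser of \emph{some} parallel pair $u,v\colon W\rightrightarrows\sum_{\hom_{\C}(\1,X)}{\1}$, so that the fold map need only satisfy $\nabla u=\nabla v$ --- two parallel morphisms $W\to\1$, equal by the uniqueness argument. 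In particular you do not need kernel pairs or item $4$ here, which is just as well: at this stage nothing guarantees that $\C$ has pullbacks, so ``the coequaliser of its kernel pair'' is not yet available.

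The genuine gap is the pivotal claim itself. Knowing that $\iota_1\circ a$ and $\iota_2\circ a$ lie in the two-element set $\hom_{\C}(\1,\2)$ does \emph{not} force $a=\mathrm{id}_{\1}$: if precomposition with $a$ fixes both injections you cannot cancel $a$, since coproduct injections are not known to be monomorphisms (let alone right-cancellable); and if it moves an injection, item $5a$ by itself is not contradicted, because the four composites $\iota_j\circ a^k$ may simply collapse onto the two available elements. A correct argument needs two further ingredients that the sketch never touches. First, if $\iota_j\circ a=\iota_{j'}$ with $j\neq j'$, then choosing $x,y\in\{\mathrm{id}_{\1},a\}$ with $\iota_j\circ x=\iota_1$ and $\iota_j\circ y=\iota_2$, the morphism $[x,y]\colon\2\to\1$ determined by $x$ and $y$ on the two summands satisfies $\iota_j\circ[x,y]=\mathrm{id}_{\2}$; thus $\iota_j$ is a split epimorphism and $4=|\hom_{\C}(\2,\2)|=|\hom_{\C}(\1,\2)|^2\leq|\hom_{\C}(\1,\2)|=2$, a contradiction. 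Second, once precomposition with every $a$ is known to fix both injections, you still need item $5c$ (with $I=J$ a singleton) to separate $a$ from $\mathrm{id}_{\1}$ by some $\psi\colon\1\to\2$; nothing in $5a$ or $5b$ alone rules out a non-identity endomorphism of $\1$ acting trivially on $\hom_{\C}(\1,\2)$. As written, the proposal asserts the crux rather than proving it.
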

In view of the previous lemma, hereafter we call \emph{points} the morphisms with domain $\1$ in the category $\C$, and denote the functor $\hom_{\C}(\1,-)$ by $\pt\colon\C\to\Set$. 

The following fact is proved in~\cite[p.\ 371]{Richter1992}. The main ingredient of the proof is an application of item~\ref{richter5}$b$:
\begin{lemma}\label{l:pt-cosquares-of-free}
For every set $I$, $\pt(\sum_{I}{\1}+\sum_{I}{\1})\cong \pt{\sum_{I}{\1}}+\pt{\sum_{I}{\1}}$.
\end{lemma}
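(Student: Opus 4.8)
The plan is to prove $\pt(\sum_{I}{\1}+\sum_{I}{\1})\cong \pt{\sum_{I}{\1}}+\pt{\sum_{I}{\1}}$ by exhibiting a bijection. There is always a canonical comparison function \[\pt{\sum_{I}{\1}}+\pt{\sum_{I}{\1}}\longrightarrow \pt\big(\sum_{I}{\1}+\sum_{I}{\1}\big)\] induced by composing a point of either summand $\sum_{I}{\1}$ with the corresponding coproduct injection into $\sum_{I}{\1}+\sum_{I}{\1}$. The content of the lemma is that this canonical map is a bijection, equivalently that every point $p\colon\1\to \sum_{I}{\1}+\sum_{I}{\1}$ factors through exactly one of the two coproduct injections, via a unique point of that summand. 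So the proof splits naturally into \emph{injectivity} (two injections are disjoint at the level of points, i.e.\ no point factors through both, and each injection is itself injective on points) and \emph{surjectivity} (every point factors through one of the two injections).

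The key tool I would use is item $5b$ of Theorem~\ref{t:Richter}: the two co-diagonal morphisms \[\textstyle\sum_{I}{\1}\xleftarrow{\ \nabla_1\ }\sum_{I}{\1}+\sum_{I}{\1}\xrightarrow{\ c\ }\2\] are jointly monic, where $\nabla_1$ is the codiagonal (folding both copies onto one $\sum_{I}{\1}$) and $c$ is the map to $\2$ that is constant $\bot$ on the first summand and constant $\top$ on the second. Joint monicity means that a point $p$ of the coproduct is determined by the pair $(\nabla_1\circ p,\ c\circ p)$. Here $c\circ p\in\pt{\2}=\{\bot,\top\}$ by item $5a$ (cf.\ Remark~\ref{rm:explanations}(i)), and this value decides which summand $p$ lives in: if $c\circ p=\bot$ we expect $p$ to come from the first copy, if $c\circ p=\top$ from the second. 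The second coordinate $\nabla_1\circ p$ then pins down the actual point of $\sum_{I}{\1}$. This immediately gives injectivity: if two points of the coproduct have the same image under the canonical map, they agree in both coordinates and hence are equal. For the image of the canonical map I would observe that a point coming from the first injection has $c\circ p=\bot$ and a point from the second has $c\circ p=\top$, so the two injection-images are disjoint and the canonical map is injective on the disjoint union.

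For surjectivity I would take an arbitrary point $p\colon\1\to \sum_{I}{\1}+\sum_{I}{\1}$ and set $q=\nabla_1\circ p\colon\1\to\sum_{I}{\1}$, a point of a single copower. The claim is that composing $q$ with the appropriate coproduct injection $\iota_1$ or $\iota_2$ (selected by whether $c\circ p$ equals $\bot$ or $\top$) recovers $p$. To verify this I would check that the candidate point $\iota_k\circ q$ has the same pair of coordinates $(\nabla_1\circ(\iota_k\circ q),\ c\circ(\iota_k\circ q))$ as $p$ does, and then invoke joint monicity to conclude $\iota_k\circ q=p$. The first coordinate matches because $\nabla_1\circ\iota_k=\mathrm{id}_{\sum_{I}{\1}}$, so $\nabla_1\circ(\iota_k\circ q)=q=\nabla_1\circ p$; the second coordinate matches by the case distinction, since $c\circ\iota_1$ is constant $\bot$ and $c\circ\iota_2$ is constant $\top$, agreeing with the chosen value of $c\circ p$.

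I expect the main obstacle to be purely bookkeeping: being careful that the two maps $\nabla_1$ and $c$ I feed into item~$5b$ are literally the codiagonal and the $\bot/\top$-classifying map named there, and that joint monicity is being applied to points (morphisms out of $\1$) rather than to arbitrary morphisms. In particular I must make sure that $c\circ p$ genuinely lands in the two-element set $\pt{\2}$, which is exactly what item~$5a$ guarantees, so that the case distinction $c\circ p\in\{\bot,\top\}$ is exhaustive and mutually exclusive. No nontrivial limit or colimit computation is required beyond these; the whole argument is the extraction of a bijection from the jointly-monic pair, and the only thing to watch is that the identities $\nabla_1\circ\iota_k=\mathrm{id}$ and the constancy of $c\circ\iota_k$ are used correctly in each coordinate.
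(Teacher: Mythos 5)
Your argument is correct and takes the same route as the paper: the paper's own ``proof'' of this lemma is only a citation to Richter \cite[p.~371]{Richter1992} together with the remark that the main ingredient is item $5b$, i.e.\ exactly the jointly monic pair $(\nabla_1,c)$ you exploit. Your write-up supplies the details the citation leaves implicit --- the exhaustive case split on $c\circ p\in\{\bot,\top\}$ guaranteed by item $5a$, and the identities $\nabla_1\circ\iota_k=\mathrm{id}$ and $c\circ\iota_k=\mathrm{const}$ used to match coordinates before invoking joint monicity.
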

The next two lemmas have the following topological interpretations. The first one states that every object of $\C$ is compact (in the appropriate sense), while the second one corresponds to the observation that the union of finitely many clopens is clopen.
\begin{lemma}\label{l:compactness}
Every object of $\C$ is $(\1,\top)$-compact.
\end{lemma}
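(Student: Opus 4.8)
The plan is to prove that every object $X$ of $\C$ is $(\1,\top)$-compact by reducing to the known case of the set-indexed copowers $\sum_{I}{\1}$, which are $(\1,\top)$-compact by hypothesis (item $6$ of Theorem \ref{t:Richter}, in the normalised form $o=\top$ of Remark \ref{rm:explanations}(iii)). The key structural tool is item $2$: for every $X$ the canonical morphism $\epsilon\colon\sum_{I}{\1}\epi X$, where $I=\pt{X}$, is a regular epimorphism. The strategy is to pull a $(\1,\top)$-cover of $X$ back along $\epsilon$ to a $(\1,\top)$-cover of $\sum_{I}{\1}$, extract a finite subcover there, and push it forward to a finite subcover of $X$.

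First I would fix an arbitrary $(\1,\top)$-cover $\mathcal{U}\subseteq\hom_{\C}(X,\2)$ of $X$. For each $d\in\mathcal{U}$ I form the composite $d\circ\epsilon\colon\sum_{I}{\1}\to\2$, obtaining a family $\epsilon^{*}\mathcal{U}=\{d\circ\epsilon\mid d\in\mathcal{U}\}$. I would then check that $\epsilon^{*}\mathcal{U}$ is a $(\1,\top)$-cover of $\sum_{I}{\1}$. This is where the regular-epi hypothesis does the work: a point $p\colon\1\to\sum_{I}{\1}$ yields the point $\epsilon\circ p$ of $X$, which is covered by some $d\in\mathcal{U}$, meaning $d\circ(\epsilon\circ p)=\top$; rewriting this as $(d\circ\epsilon)\circ p=\top$ shows $p$ is covered by $d\circ\epsilon\in\epsilon^{*}\mathcal{U}$. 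Since $\sum_{I}{\1}$ is $(\1,\top)$-compact, there is a finite subset $\mathcal{U}_0\subseteq\mathcal{U}$ with $\{d\circ\epsilon\mid d\in\mathcal{U}_0\}$ a $(\1,\top)$-cover of $\sum_{I}{\1}$.

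It then remains to show that $\mathcal{U}_0$ itself is a $(\1,\top)$-cover of $X$, which is precisely the step where I expect the surjectivity of $\epsilon$ at the level of points to be essential. Here the plan is to invoke that $\epsilon$ is a \emph{regular} epimorphism and that $\1$ is regular projective in $\C$, so that $\pt=\hom_{\C}(\1,-)$ sends $\epsilon$ to a surjection of sets $\pt{\epsilon}\colon\pt{\sum_{I}{\1}}\epi\pt{X}$. Given any point $q\colon\1\to X$, regular projectivity lifts it to a point $p\colon\1\to\sum_{I}{\1}$ with $\epsilon\circ p=q$; since $\mathcal{U}_0$ covers $p$ after composing with $\epsilon$, some $d\in\mathcal{U}_0$ satisfies $(d\circ\epsilon)\circ p=\top$, i.e.\ $d\circ q=\top$, so $d$ covers $q$. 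As $q$ was arbitrary, $\mathcal{U}_0$ is the desired finite $(\1,\top)$-subcover of $\mathcal{U}$.

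The main obstacle is establishing the surjectivity of $\pt{\epsilon}$, i.e.\ that $\1$ is regular projective in this setting; this is the categorical incarnation of the lifting step and must be secured from the axioms before the forward-and-back argument closes. In the ambient setup of the earlier sections this was item $2$ of Lemma \ref{l:points-ex-mono} (``$\1$ is regular projective''), but under Richter's axioms I would either verify directly that $\1$ is regular projective — using that $\1$ is terminal (Lemma \ref{l:1-terminal}) together with the coseparation and compactness hypotheses — or, more simply, observe that being a regular generator (item $2$) makes every object a regular-epi image of a copower of $\1$, and combine this with whatever projectivity of $\1$ the axioms yield. Once that lifting is available, the cover-pullback/subcover-pushforward argument is routine and purely diagrammatic.
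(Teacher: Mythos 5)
Your argument is correct and is essentially the paper's proof: pull the cover back along the canonical regular epimorphism $\epsilon_X\colon\sum_{\pt{X}}{\1}\epi X$, check it is a $(\1,\top)$-cover of the copower, extract a finite subcover there using item $6$, and push it forward. The one remark worth making is that the ``main obstacle'' you flag --- surjectivity of $\pt{\epsilon_X}$ --- is no obstacle at all here: since the copower is indexed by $\pt{X}$ itself and $\epsilon_X$ is by definition the cotuple of all points of $X$, each point $q\colon\1\to X$ lifts tautologically along the $q$-th coproduct injection (i.e.\ $\epsilon_X\circ\iota_q=q$), so no regular projectivity of $\1$ needs to be secured from Richter's axioms.
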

\begin{proof}
The proof is a straightforward abstraction of the argument showing that the continuous image of a compact space is compact. 
Let $X$ be an arbitrary object of $\C$, and $\mathcal{U}\subseteq \hom_{\C}(X,\2)$ a $(\1,\top)$-cover of $X$.
Write $\epsilon_X\colon \sum_{\pt{X}}{\1}\epi X$ for the regular epimorphism in item~\ref{richter2}, and consider the set
\[
\mathcal{U}'=\{d\circ\epsilon_X\mid d\in \mathcal{U}\}\subseteq \hom_{\C}\big(\sum_{\pt{X}}{\1},\2\big).
\] 
It is not difficult to see that $\mathcal{U}'$ is a $(\1,\top)$-cover of $\sum_{\pt{X}}{\1}$, hence it admits a finite $(\1,\top)$-subcover
\[
\mathcal{U}'_0=\{d_1\circ\epsilon_X,\ldots,d_n\circ\epsilon_X\}
\]
by item~\ref{richter6}. It follows that $\mathcal{U}_0=\{d_1,\ldots,d_n\}$ is a finite $(\1,\top)$-subcover of $\mathcal{U}$.
\end{proof}
\begin{lemma}\label{l:union-of-clopens}
Given morphisms $\phi_1,\ldots,\phi_n\in \hom_{\C}(X,\2)$, there is $\phi\in \hom_{\C}(X,\2)$ such that
\[
(\pt{\phi})^{-1}(\top)=\bigcup_{i=1}^n{(\pt{\phi_i})^{-1}(\top)}.
\]
\end{lemma}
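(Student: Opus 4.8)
The plan is to reduce the $n$-ary union to the binary case and then solve the binary case using the non-trivial automorphism $\sigma\colon\2\to\2$ from Remark \ref{rm:explanations}(i) together with the coproduct structure of $\2=\1+\1$. By an easy induction on $n$ it suffices to treat $n=2$: given $\phi_1,\phi_2\in\hom_{\C}(X,\2)$, produce $\phi$ with $(\pt{\phi})^{-1}(\top)=(\pt{\phi_1})^{-1}(\top)\cup(\pt{\phi_2})^{-1}(\top)$. The cases $n=0$ and $n=1$ are trivial (for $n=0$ take $\phi=\bot\circ\,!$, the constant-$\bot$ morphism, using that $\1$ is terminal by Lemma \ref{l:1-terminal}).

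For the binary case, I would think of a morphism $X\to\2$ as a ``clopen'' $U=(\pt\phi)^{-1}(\top)$, so the goal is a clopen operation realising union. The cleanest route is to realise intersection and complementation abstractly and then use De Morgan. Complementation is immediate: the automorphism $\sigma\colon\2\to\2$ satisfies $\pt\sigma(\top)=\bot$, $\pt\sigma(\bot)=\top$ (Remark \ref{rm:explanations}(i)), so $(\pt{(\sigma\circ\phi_i)})^{-1}(\top)=(\pt{\phi_i})^{-1}(\bot)$, the complement. For intersection I would build a morphism $X\to\2$ whose ``$\top$-fibre'' is $(\pt\phi_1)^{-1}(\top)\cap(\pt\phi_2)^{-1}(\top)$. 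The natural candidate is to pair $\phi_1$ and $\phi_2$ into a map $X\to\2\times\2$ and then postcompose with a ``conjunction'' morphism $\wedge\colon\2\times\2\to\2$; on points this should send $(\top,\top)\mapsto\top$ and everything else to $\bot$. The subtlety is that this presupposes that $\2\times\2$ exists and that $\pt$ detects its four points correctly, which I would justify via the preservation results already established: $\pt$ preserves the relevant coproducts by Lemma \ref{l:pt-cosquares-of-free}, and one can exhibit $\wedge$ concretely using the coproduct description $\2=\1+\1$ and the universal property, defining it on the coproduct summands.

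Once intersection $\phi_1\wedge\phi_2$ and complementation $\sigma\circ\phi$ are available, I would set
\[
\phi=\sigma\circ\big((\sigma\circ\phi_1)\wedge(\sigma\circ\phi_2)\big),
\]
so that on points $(\pt\phi)^{-1}(\top)$ is the complement of $(\pt\phi_1)^{-1}(\bot)\cap(\pt\phi_2)^{-1}(\bot)$, which is exactly $(\pt\phi_1)^{-1}(\top)\cup(\pt\phi_2)^{-1}(\top)$ by De Morgan. This computation is entirely at the level of the sets of points, where $\pt X$ is an honest set and the fibres behave set-theoretically, so it is routine once the morphisms $\wedge$ and $\sigma$ are in hand.

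The main obstacle I expect is the clean construction of the conjunction morphism $\wedge\colon\2\times\2\to\2$ (equivalently, verifying that the binary product $\2\times\2$ behaves as expected on points and supports a morphism with the prescribed point-level behaviour). The existence of finite products is not among Richter's stated axioms, so I would instead avoid $\2\times\2$ altogether: exploiting that $\2=\1+\1$ is a coproduct, I would define the needed morphism directly on $X$ by gluing along the (regular epi) cover $\epsilon_X\colon\sum_{\pt X}{\1}\epi X$ from axiom $2$, or by using that $\phi_1,\phi_2$ jointly classify a decomposition of $X$ into the four ``clopen regions'' determined by the pair $(\phi_1,\phi_2)$ and mapping each region appropriately into $\2$. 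Checking that such a morphism descends to $X$ (i.e.\ is well-defined as a single morphism out of $X$ rather than out of the cover) is the delicate point, and would rely on the coherent-style structure and on the fact that $\2$ is a coseparator for copowers of $\1$ (axiom $5c$) to pin the morphism down uniquely from its action on points.
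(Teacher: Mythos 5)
Your core strategy is the paper's: reduce to the binary case by induction and realise a binary Boolean operation as a morphism $\2\times\2\to\2$ to be postcomposed with the pairing of $\phi_1$ and $\phi_2$. The paper builds disjunction directly rather than conjunction-plus-De-Morgan: it notes that $(\bot\times id_{\2})+(\top\times id_{\2})\colon\2+\2\to\2\times\2$ is an isomorphism (via Lemma \ref{l:pt-cosquares-of-free}) and defines $\cup$ on the two coproduct summands as $id_{\2}+(\top\circ{!})$ --- precisely the ``define it on the coproduct summands'' idea you mention; your detour through $\sigma$ and De Morgan is harmless but adds nothing. Where you go astray is the last paragraph. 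The worry that $\2\times\2$ might not exist is unfounded: axioms $1$--$3$ make $\pt$ a right adjoint of descent type and give $\C$ all coequalisers, so $\C$ is a reflective subcategory of $\Set^T$ and in particular complete, and $\pt$, being representable, sends $\2\times\2$ to the four-element set of pairs. You should therefore not abandon the product route for the vaguer alternative of gluing along the cover $\epsilon_X\colon\sum_{\pt{X}}{\1}\epi X$ or decomposing $X$ into four regions: that version leaves unresolved exactly the descent problem you yourself flag (why the locally defined map is a single morphism out of $X$), whereas the $\2\times\2$ construction closes the argument completely.
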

\begin{proof}
We prove the case $n=2$. The statement then follows by a straightforward induction. It suffices to show the existence of a morphism $\cup\in\hom_{\C}(\2\times \2, \2)$ whose image under $\pt\colon\C\to\Set$ is the function 
\[\P(\{\bot,\top\})\to \{\bot,\top\}, \ \ T\mapsto \begin{cases} \bot & \mbox{if} \ T=\emptyset \\ \top & \mbox{otherwise,} \end{cases}\]
for then $\phi=\cup\circ (\phi_1\times \phi_2)\colon X\to \2$ will have the desired property.
To define the morphism $\cup\colon\2\times \2\to \2$, note that $\2+\2\cong\2\times \2$ by Lemma~\ref{l:pt-cosquares-of-free}. Explicitly, an isomorphism is provided by $(\bot\times id_{\2})+(\top\times id_{\2})\colon \2+\2\to \2\times \2$. 
Therefore, up to composition with the latter isomorphism, $\cup\in\hom_{\C}(\2\times \2, \2)$ can be defined as
\[
id_{\2}+(\top\circ {!})\colon \2+\2\to \2, \ \text{where} \ \2\xrightarrow{!}\1\xrightarrow{\top}\2.
\]
It is not difficult to see that $(\pt{\cup})(T)=\bot$ precisely when $T=\emptyset$, thus concluding the proof.
\end{proof}
Item~\ref{richter1} in Theorem~\ref{t:Richter} implies that the functor \[\pt\colon \C\to \Set\] has a left adjoint $\sum_{-}{\1}\colon \Set\to\C$, and item~\ref{richter2} that $\pt$ is of \emph{descent type}. That is, the comparison functor $\C\to\Set^T$ is full and faithful. Here, $T$ is the monad induced by the adjunction $\sum_{-}{\1}\dashv \pt$, and $\Set^T$ is the associated category of Eilenberg-Moore algebras. The category $\C$ admits all coequalisers by item~\ref{richter3}, whence the comparison functor has a left adjoint. In other words, $\C$ can be identified with a reflective subcategory of $\Set^T$. In particular, $\C$ is complete and cocomplete, and the functor $\pt$---being of descent type---is conservative and thus reflects all limits. Hereafter, to improve exposition, we will assume that $\C$ is a replete subcategory of $\Set^T$, i.e.\ $\C$ is closed under isomorphisms in $\Set^T$.

For every object $X$ of $\Set^T$, we denote by $\rho_X\colon X\to Y$ the component at $X$ of the reflection of $\Set^T$ into $\C$.  Employing usual terminology, we say that $\C$ is an \emph{epireflective} (resp.\ \emph{monoreflective}, or \emph{bireflective}) subcategory of $\Set^T$ provided all reflection morphisms $\rho_X$ are epimorphisms (resp.\ monomorphisms, or both monomorphisms and epimorphisms). 
The next lemma shows that $\C$ is closed in $\Set^T$ under taking certain subobjects. It will allow us to deduce in Lemma~\ref{l:bireflective} that $\C$ is a bireflective subcategory of $\Set^T$.
\begin{lemma}\label{l:partial-fullness-on-subobjects}
Let $X$ be an object of $\C$ such that the canonical morphism $X\to \2^{\hom_{\C}(X,\2)}$ is a monomorphism. Then,
for every monomorphism $m\colon S\mono X$ in $\Set^T$, $S$ belongs to $\C$. In particular, for every set $I$ and monomorphism $S\mono \sum_{I}{\1}$ in $\Set^T$, $S$ belongs to $\C$.
\end{lemma}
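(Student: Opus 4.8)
The plan is to realise $S$ as a limit, taken in $\Set^T$, of objects that manifestly belong to $\C$. Since $\C$ is a reflective subcategory of $\Set^T$ it is closed under all limits, and since $\1$ and $\2$ lie in $\C$, so does every power $\2^I$ and, for any $\phi\in\hom_{\C}(X,\2)$, the fibre $\phi^{-1}(\top)$ defined as the pullback of $\top\colon\1\to\2$ along $\phi$ (a subobject of $X$, since $\top$ is monic). Writing $B=\{\phi\in\hom_{\C}(X,\2)\mid \phi\circ m=\top\circ{!}_{S}\}$ for the set of maps to $\2$ that are constantly $\top$ on $S$, I set $E=\bigwedge_{\phi\in B}\phi^{-1}(\top)$, equivalently the equaliser in $\C$ of the pair $\langle\phi\rangle_{\phi\in B},\,\langle\top\circ{!}\rangle_{\phi\in B}\colon X\rightrightarrows\2^{B}$. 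As an equaliser of $\C$-morphisms between objects of $\C$, computed as a limit in $\Set^T$, the object $E$ belongs to $\C$, and $S\leq E$ as subobjects of $X$ by construction. Because the forgetful functor $\pt\colon\Set^T\to\Set$ is conservative and monomorphisms are detected on underlying sets, it will suffice to prove that $\pt{E}=\pt{S}$: then the inclusion $S\mono E$ is a bijection on underlying sets, hence an isomorphism, whence $S\cong E\in\C$.

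The inclusion $\pt{S}\subseteq\pt{E}$ is immediate. For the converse I must show that every point $x\in\pt{X}$ lying outside $\pt{S}$ is excluded from some $\phi^{-1}(\top)$ with $\phi\in B$; that is, I must separate $x$ from $S$ by a morphism $X\to\2$ that is constantly $\top$ on $S$ and takes the value $\bot$ at $x$. This separation is the heart of the argument, and it is where the hypothesis enters: the monomorphism $X\mono\2^{\hom_{\C}(X,\2)}$ says precisely that the maps in $\hom_{\C}(X,\2)$ are jointly monic, so for each $s\in\pt{S}$ (which differs from $x$ as a point of $X$) there is $\psi_s\in\hom_{\C}(X,\2)$ with $\psi_s(s)=\top$ and $\psi_s(x)=\bot$, after postcomposing with the swap automorphism $\sigma\colon\2\to\2$ if necessary.

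The family $\{\psi_s\circ m\mid s\in\pt{S}\}\subseteq\hom_{\Set^T}(S,\2)$ is then a $(\1,\top)$-cover of $S$. The object $S$ is $(\1,\top)$-compact: the proof of Lemma \ref{l:compactness} applies to any object of $\Set^T$, since $\1$ is a regular generator there --- the canonical morphism $\sum_{\pt{S}}\1\to S$ is the counit of the Eilenberg--Moore adjunction, hence a regular epimorphism --- and the copower $\sum_{\pt{S}}\1$ is $(\1,\top)$-compact by item $6$. Extracting a finite subcover $\psi_{s_1}\circ m,\ldots,\psi_{s_n}\circ m$ and forming the join $\psi=\psi_{s_1}\cup\cdots\cup\psi_{s_n}\in\hom_{\C}(X,\2)$ supplied by Lemma \ref{l:union-of-clopens}, one checks that $(\pt{\psi})^{-1}(\top)=\bigcup_{i}(\pt{\psi_{s_i}})^{-1}(\top)\supseteq\pt{S}$, so $\psi\in B$, while $\psi(x)=\bot$ because each $\psi_{s_i}(x)=\bot$. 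Hence $x\notin\pt{E}$, which establishes $\pt{E}=\pt{S}$ and therefore $S\in\C$.

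For the final assertion it remains to note that, when $X=\sum_{I}\1$, the required monomorphism $\sum_{I}\1\mono\2^{\hom_{\C}(\sum_{I}\1,\2)}$ does hold: applying item $5c$ with $\1=\sum_{1}\1$ shows that $\2$ separates the points of $\sum_{I}\1$, so the canonical map is monic by conservativity of $\pt$. I expect the separation step to be the main obstacle: the passage from $E$ to $S$ and the identification of $E$ as an object of $\C$ are formal consequences of the reflective inclusion and of conservativity of $\pt$, whereas separating a point from $S$ genuinely combines joint monicity (from the hypothesis), the stability of clopens under finite unions (Lemma \ref{l:union-of-clopens}), and --- most delicately --- the compactness of $S$, which has to be secured for an object of $\Set^T$ not yet known to lie in $\C$.
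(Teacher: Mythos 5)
Your proof is correct and follows essentially the same route as the paper's: separate each point outside $S$ from $S$ using the joint-monicity hypothesis, compactness, and Lemma \ref{l:union-of-clopens}, and then realise $S$ as a limit in $\Set^T$ of a diagram lying in $\C$. The only (cosmetic) difference is that you present $S$ as the intersection $\bigwedge_{\phi\in B}\phi^{-1}(\top)$ rather than as the equaliser of a single pair $X\rightrightarrows \2^{V}$ indexed by the excluded points; you are in fact more careful than the paper in justifying that the compactness argument of Lemma \ref{l:compactness} applies to the object $S$, which is not yet known to lie in $\C$.
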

\begin{proof}
The second part of the statement follows from the first one and the fact that item~\ref{richter5}$c$, along with the completeness of $\C$, entails that the canonical morphism $\sum_{I}{\1}\to \2^{\hom_{\C}(\sum_{I}{\1},\2)}$ is a monomorphism.

Now, let $m\colon S\mono X$ be as in the statement, and write $V=\pt{X}\setminus \pt{S}$. We will construct a morphism \[\xi\colon X\to \2^V\] such that, for each $q\in \pt{X}$, $\xi\circ q\colon \1\to\2^V$ is the constant morphism $\overline{\top}$ of value $\top$ if, and only if, $q$ belongs to $\pt{S}$ (i.e.\ $q$ factors through $m\colon S\mono X$). Note that $\2^V$ belongs to $\C$ because the latter category is closed under limits in $\Set^T$. Because $\C$ is a full subcategory of $\Set^T$ and the functor $\pt\colon\C\to\Set$ reflects limits, it will follow that $S$ is the equaliser in $\C$ of the diagram
\[\begin{tikzcd}
X \arrow[yshift=3pt]{r}{\overline{\top}} \arrow[yshift=-3pt]{r}[swap]{\xi} & \2^V.
\end{tikzcd}\]
In particular, $S$ will belong to $\C$. To construct the morphism $\xi$, we rely on the following fact:
\begin{claim}\label{cl:separation-set-and-point}
Let $X$ be an object of $\C$ such that the canonical morphism $X\to \2^{\hom_{\C}(X,\2)}$ is a monomorphism, and $S$ a subobject of $X$. For every $p\in \pt{X}\setminus \pt{S}$, there exists a morphism $\xi_p\colon X\to \2$ such that $\pt{S}\subseteq (\pt{\xi_p})^{-1}(\top)$ and $p\notin (\pt{\xi_p})^{-1}(\top)$.
\end{claim}
\begin{proof}[of Claim]
Fix $p\in \pt{X}\setminus \pt{S}$. By assumption, for each $p'\in \pt{S}$ there is a morphism $\xi_{p,p'}\colon X\to \2$ such that  
$\xi_{p,p'}\circ m\circ p=\bot\neq\top=\xi_{p,p'}\circ m\circ p'$. The collection
\[
\mathcal{U}=\{\xi_{p,p'}\mid p'\in \pt{S}\}\subseteq \hom_{\C}(X,\2)
\]
satisfies $\pt{S}\subseteq \bigcup_{d\in \mathcal{U}}{(d\circ-)^{-1}(\top)}$ and $p\notin \bigcup_{d\in \mathcal{U}}{(d\circ-)^{-1}(\top)}$.
By Lemma~\ref{l:compactness}, there are finitely many points $p_1',\ldots,p_n'\in\pt{S}$ such that 
\begin{equation*}
\pt{S}\subseteq \bigcup_{i=1}^n{(\pt{\xi_{p,p_i'}})^{-1}(\top)}.
\end{equation*}
In view of Lemma~\ref{l:union-of-clopens}, there exists a morphism $\xi_p\colon X\to \2$ satisfying
\[
(\pt{\xi_p})^{-1}(\top)=\bigcup_{i=1}^n{(\pt{\xi_{p,p_i'}})^{-1}(\top)}.
\]
We have $\pt{S}\subseteq (\pt{\xi_p})^{-1}(\top)$ and $p\notin (\pt{\xi_p})^{-1}(\top)$, as was to be proved.
\end{proof}
Let $\xi\colon X\to \2^V$ be the unique morphism whose composition with the $p$-th projection $\2^V\to \2$ is $\xi_p$, for every $p\in V$. We claim that $\xi$ satisfies the desired property. In one direction, suppose $q\in \pt{\sum_{I}{\1}}$ is such that $q\in(\pt{\xi_p})^{-1}(\top)$ for every $p\in V$. Then $q\notin V$, for otherwise $q\notin (\pt{\xi_q})^{-1}(\top)$. Thus, $q\in V^c=\pt{S}$. Conversely, pick $q\in\pt{S}$. Then, $q\in \pt{S}\subseteq (\pt{\xi_p})^{-1}(\top)$ for every $p\in V$. This concludes the proof of the lemma. 
\end{proof}
\begin{lemma}\label{l:bireflective}
$\C$ is a bireflective subcategory of $\Set^T$. 
\end{lemma}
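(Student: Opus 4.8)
The goal is to show that the reflection morphisms $\rho_X\colon X\to \C(X)$ are both monic and epic for every $X$ in $\Set^T$. Recall that $\C$ is already known to be epireflective (since $\pt$ is of descent type, hence the comparison functor is full and faithful and $\C$ sits as a \emph{reflective} subcategory with the reflection onto the subcategory of $T$-algebras that lie in $\C$; epireflectivity is the standard consequence of $\C$ being closed under products and subobjects in a well-behaved ambient category). So the substantive content is \emph{monoreflectivity}: that each $\rho_X$ is also a monomorphism.

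The plan is to reduce the monoreflectivity of an arbitrary $X$ to the case of the free algebras $\sum_{I}{\1}$, exploiting the fact that $\C$ is closed in $\Set^T$ under subobjects of such free algebras by Lemma \ref{l:partial-fullness-on-subobjects}. First I would take any object $X$ of $\Set^T$ and present it as a regular quotient of a free algebra, using that $\pt$ is of descent type: there is a coequaliser diagram $\sum_{J}{\1}\rightrightarrows\sum_{I}{\1}\to X$ in $\Set^T$, with the two parallel arrows $u,v$ encoding the $T$-algebra structure. Applying the reflection (a left adjoint, hence right-exact) produces a coequaliser diagram in $\C$, and since free algebras already lie in $\C$ the reflection fixes them; thus $\C(X)$ is computed as the coequaliser in $\C$ of the same pair $u,v$. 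The task becomes to compare the coequaliser of $u,v$ taken in $\Set^T$ with the one taken in $\C$, i.e.\ to show the canonical comparison is monic.

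The key geometric idea is to factor each reflection through a power of $\2$. For a free algebra $\sum_{I}{\1}$ the canonical map into $\2^{\hom_{\C}(\sum_{I}{\1},\2)}$ is a monomorphism by item $5c$ together with completeness of $\C$ (this is exactly the input recorded in the first lines of Lemma \ref{l:partial-fullness-on-subobjects}). Then I would argue that the image of $\sum_{I}{\1}$ inside this power of $\2$ is computed the same way in $\Set^T$ and in $\C$: the image factorisation of the map $\sum_{I}{\1}\to \2^{K}$ produces a subobject of $\2^{K}$, and since $\2^K$ lies in $\C$ and $\C$ is closed under subobjects of such powers (via Lemma \ref{l:partial-fullness-on-subobjects} applied to $\2^K$, whose canonical map to a power of $\2$ is trivially monic), that image already belongs to $\C$. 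Consequently the $\Set^T$-regular image of a free algebra under a map into a power of $\2$ coincides with its $\C$-image, which forces the comparison map $\rho_X$ to be injective on points, and hence (since $\pt$ is conservative and reflects monos) a monomorphism.

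The main obstacle I anticipate is handling the passage from free algebras to arbitrary $X$ cleanly: one must verify that the coequaliser defining $X$ in $\Set^T$ is preserved well enough that the separation property by maps into $\2$ descends from $\sum_{I}{\1}$ to $X$. Concretely, the delicate point is showing that any two distinct points of $X$ (equivalently, of $\C(X)$, since $\rho_X$ is already epic) are separated by a morphism $X\to\2$ in $\C$; this is where the weak extensivity (item $5b$, via Lemma \ref{l:pt-cosquares-of-free}) and the coseparator property of $\2$ (item $5c$) must be combined. I would phrase this as: the coseparator property gives separation of points of free algebras by clopen maps, Lemma \ref{l:partial-fullness-on-subobjects} transports these clopen maps along the quotient $\sum_{I}{\1}\epi X$, and therefore $\2$ coseparates points of $X$ as well. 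Once $X\to\2^{\hom_{\C}(X,\2)}$ is a monomorphism for every $X$, closure of $\C$ under subobjects of powers of $\2$ makes every object of $\Set^T$ into a subobject of an object of $\C$, which yields that the reflection $\rho_X$ is monic, completing the proof that $\C$ is bireflective.
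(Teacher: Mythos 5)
Your overall framing (reduce to showing each $\rho_X$ is monic, then invoke the standard fact that monoreflective implies bireflective) matches the paper, but the core of your argument does not work. The pivotal claim --- that any two distinct points of an arbitrary $X$ in $\Set^T$ are separated by a morphism $X\to\2$, so that $X\to\2^{\hom_{\C}(X,\2)}$ is monic for \emph{every} $X$ --- is false in the intended model. Take $\C\simeq\KH$ and $X=[0,1]$: this is a quotient of the free algebra $\beta([0,1]_{\mathrm{disc}})$, yet, being connected, it admits only constant maps to $\2$, so $\2$ does not coseparate its points. The step where you ``transport clopen maps along the quotient $\sum_{I}{\1}\epi X$'' is precisely where this breaks: a map $\sum_I\1\to\2$ separating two points need not be constant on the fibres of $\epsilon$, and in general no separating map descends. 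Item $5c$ only gives coseparation for copowers of $\1$ (and, with completeness, their finite products), not for their quotients; indeed the whole point of the construction is that quotients of free algebras can be ``connected''. Relatedly, you never invoke item $4$ (effectivity of internal equivalence relations in $\C$), which is the essential ingredient and cannot be dispensed with here.

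The paper's proof avoids your obstacle entirely by working with the \emph{kernel pair} $R\rightrightarrows\sum_I\1$ of $\epsilon\colon\sum_I\1\epi X$ rather than with points of $X$. Since $R$ is a subobject of $\sum_I\1\times\sum_I\1$, and the latter does embed into a power of $\2$ by item $5c$ plus completeness, Lemma \ref{l:partial-fullness-on-subobjects} puts $R$ in $\C$. Then $R$ is an internal equivalence relation in $\C$, hence effective by item $4$: its coequaliser $\omega\colon\sum_I\1\to Y$ in $\C$ has kernel pair exactly $R$. Because the comparison functor preserves limits and $\omega=\rho_X\circ\epsilon$, the kernel pairs of $\epsilon$ and of $\rho_X\circ\epsilon$ coincide, which forces $\rho_X$ to be monic. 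If you want to salvage your write-up, replace the coseparation argument for arbitrary $X$ with this kernel-pair argument; the parts of your proposal concerning free algebras and the reflection of coequalisers are fine as they stand.
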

\begin{proof}
Let $X$ be an object of $\Set^T$, and $I$ a set such that there exists a regular epimorphism $\epsilon\colon \sum_{I}{\1}\epi X$ in $\Set^T$. Denote by
\[\begin{tikzcd}
R \arrow[yshift=3pt]{r}{\alpha_1} \arrow[yshift=-3pt]{r}[swap]{\alpha_2} & \sum_{I}{\1}
\end{tikzcd}\]
the kernel pair of $\epsilon$. In particular, there is a monomorphism $R\mono \sum_{I}{\1}\times\sum_{I}{\1}$ in $\Set^T$. It is not difficult to see that item~\ref{richter5}$c$, along with the completeness of $\C$, entails that the canonical morphism $\sum_{I}{\1}\times\sum_{I}{\1}\to \2^{\hom_{\C}(\sum_{I}{\1}\times\sum_{I}{\1},\2)}$ is a monomorphism.
It follows by Lemma~\ref{l:partial-fullness-on-subobjects} that $R$ belongs to $\C$. Further, $R$ is an internal equivalence relation in $\C$, and therefore it is effective by item~\ref{richter4}. That is, denoting by $\omega\colon \sum_{I}{\1}\to Y$ the coequaliser of $\alpha_1$ and $\alpha_2$ in $\C$, the following is a pullback square in $\C$.
\[\begin{tikzcd}
R \arrow{r}{\alpha_1} \arrow{d}[swap]{\alpha_2} \arrow[dr, phantom, "\lrcorner", very near start] & \sum_{I}{\1} \arrow{d}{\omega} \\
\sum_{I}{\1} \arrow{r}{\omega} & Y
\end{tikzcd}\]
Because the comparison functor $\C\to\Set^T$ preserves limits, the diagram above is also a pullback in $\Set^T$. Thus, the kernel pair of $\epsilon\colon \sum_{I}{\1}\epi X$ coincides with the kernel pair of $\omega\colon \sum_{I}{\1}\to Y$. Notice that $\omega=\rho_X\circ \epsilon$, where $\rho_X\colon X\to Y$ is the reflection morphism. Hence, the kernel pair of $\epsilon$ coincides with the kernel pair of $\rho_X\circ \epsilon$. It follows that $\rho_X$ is injective, i.e.\ a monomorphism in $\Set^T$. Since $X$ is arbitrary, we conclude that $\C$ is monoreflective in $\Set^T$. By~\cite[Proposition 16.3]{Joy1990}, the latter implies that $\C$ is bireflective.
\end{proof}
Our next aim is to show that the category $\Set^T$ is extensive, hence a pretopos. This will entail at once that $\C$ is also a pretopos, because a balanced category has no non-trivial bireflective subcategories. We start by showing that the canonical forgetful functor $U\colon \Set^T\to \Set$ preserves finite coproducts, as this will allow us to lift the extensivity property from $\Set$ to $\Set^T$.
\begin{lemma}\label{l:U-preserves-finite-sums}
The forgetful functor $U\colon \Set^T\to \Set$ preserves finite coproducts.
\end{lemma}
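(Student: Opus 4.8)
The plan is to reduce the statement to a property of the monad $T$ itself, and then to the base case of free algebras. Since the free functor $F=\sum_{-}{\1}\colon\Set\to\Set^T$ is a left adjoint it preserves all coproducts, and $UF=T$ (with $TA\cong\pt{\sum_A{\1}}$); hence if $U$ preserves finite coproducts then so does the endofunctor $T$. I would show the converse, that this is already sufficient. Indeed, given two algebras $(X,\xi)$ and $(Y,\upsilon)$, a natural isomorphism $\theta\colon TX+TY\xrightarrow{\ \cong\ }T(X+Y)$ lets one equip the set-theoretic disjoint union $X+Y$ with the structure map $(\xi+\upsilon)\circ\theta^{-1}\colon T(X+Y)\to X+Y$; the unit and associativity laws follow from those for $\xi,\upsilon$ by a routine diagram chase using the naturality of the unit $\eta$ and multiplication $\mu$ and the compatibility of $\theta$ with the coproduct injections. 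One then checks that $(X+Y,(\xi+\upsilon)\circ\theta^{-1})$, with the evident injections, is the coproduct in $\Set^T$, so that $U$ computes finite coproducts on underlying sets. It therefore suffices to prove $T\emptyset\cong\emptyset$ and $T(A+B)\cong TA+TB$ for all sets $A,B$.

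The empty case is immediate. As $T\emptyset=\pt{(\sum_{\emptyset}{\1})}=\pt{\0}$, it is enough to see that the initial object $\0$ of $\C$ has no points: a morphism $\1\to\0$ would, together with $\0\to\1$, force $\0\cong\1$ (both composites are identities, since $\1$ is terminal by Lemma \ref{l:1-terminal}), contradicting non-triviality, which holds because $\bot\neq\top$ in $\pt{\2}$ by item $5a$.

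For the binary case set $P=\sum_A{\1}+\sum_B{\1}=\sum_{A+B}{\1}$ and consider the canonical comparison $c\colon\pt{\sum_A{\1}}+\pt{\sum_B{\1}}\to\pt{P}$ induced by the coproduct injections $a\colon\sum_A{\1}\to P$ and $b\colon\sum_B{\1}\to P$. Injectivity of the two components is clear, since $a$ and $b$ are split monomorphisms (the cases $A=\emptyset$ or $B=\emptyset$ being trivial as $\pt{\0}=\emptyset$), so $\pt{a}$ and $\pt{b}$ are injective. Their images are disjoint: the morphism $\chi=[\,\bot\circ{!},\,\top\circ{!}\,]\colon P\to\2$ is constantly $\bot$ on the $A$-summand and constantly $\top$ on the $B$-summand, so an equality $a\circ p=b\circ q$ of points would yield $\bot=\top$, against item $5a$. (The same computation shows $\sum_A{\1}\times_P\sum_B{\1}$ has no points and hence, as $\pt$ is conservative and $\pt{\0}=\emptyset$, is initial; so this coproduct is automatically disjoint.)

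It remains to prove that $c$ is surjective, i.e.\ that every point $x\colon\1\to P$ factors through $a$ or through $b$, and this is the main obstacle. The ``side'' of $x$ is already decided, because $\chi\circ x\in\pt{\2}=\{\bot,\top\}$ by item $5a$; so it suffices to show that $\chi\circ x=\bot$ forces $x$ to factor through the summand $\sum_A{\1}$ (the other case being symmetric). Equivalently, one must show that the summand $a\colon\sum_A{\1}\mono P$ coincides with the fibre of $\chi$ over $\bot$, namely the pullback of $\bot\colon\1\to\2$ along $\chi$, which is a subobject of the free algebra $P=\sum_{A+B}{\1}$ and hence already lies in $\C$ by Lemma \ref{l:partial-fullness-on-subobjects}. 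That $\sum_A{\1}$ is contained in this fibre is clear; the real content is the reverse inclusion, that no point lying ``on the $A$ side'' escapes the summand. I would prove this by a compactness argument in the spirit of Lemma \ref{l:partial-fullness-on-subobjects}: using item $5c$ to separate points by morphisms into $\2$, the $(\1,\top)$-compactness of $P$ (Lemma \ref{l:compactness}) to extract finite subcovers, and Lemma \ref{l:union-of-clopens} to close the relevant families of clopens under finite union, one shows that the $\bot$-fibre of $\chi$ is exhausted by the image of $\sum_A{\1}$. This realisation step---upgrading the purely formal decision $\chi\circ x=\bot$ to an actual factorisation---is exactly where the compactness content of Richter's axioms enters, and is the crux; everything else (the monadic bookkeeping of the first paragraph and the disjointness computations) is routine.
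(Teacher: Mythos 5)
Your proposal is correct in outline but follows a genuinely different route from the paper's. The paper reduces the statement (via Richter's observation that it suffices to check $U(X+X)\cong U(X)+U(X)$) to presenting $X$ as a quotient of a free algebra $\sum_{I}{\1}$ by its kernel pair $R$, and then proves $U(R+R)\cong U(R)+U(R)$, which requires first knowing that $R$ and $R+R$ lie in $\C$ (Lemma \ref{l:partial-fullness-on-subobjects}). You instead push everything down to the monad: the standard fact that if $T$ preserves finite coproducts then finite coproducts of $T$-algebras are computed on underlying sets reduces the lemma to $T\emptyset\cong\emptyset$ and $T(A+B)\cong TA+TB$, i.e.\ to a statement about copowers of $\1$ only. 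This is arguably cleaner --- it avoids kernel pairs altogether --- at the price of needing the comparison map for two \emph{distinct} copowers, which is more general than Lemma \ref{l:pt-cosquares-of-free} (two copies of the \emph{same} copower) and so cannot simply be cited; your direct argument for it is the right move. The ``realisation step'' you flag as the crux is carried out in the paper's proof (for $R+R$) by exactly the tools you name, but two details should be made explicit. First, the separating family $\{\psi_{q'}\}$ is by construction \emph{not} a $(\1,\top)$-cover of $P$ (it misses $x$), so compactness must be applied to the summand $\sum_{A}{\1}$ --- itself a copower of $\1$, hence compact by item $6$ --- via the restricted family $\{\psi_{q'}\circ a\}$, not to $P$ itself. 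Second, the punchline is the universal property of the coproduct: once Lemma \ref{l:union-of-clopens} produces a single $h\colon P\to\2$ with $(\pt{h})^{-1}(\top)$ containing the images of both $\pt{a}$ and $\pt{b}$ (for the $B$-side one may simply adjoin $\chi$ to the union) while $h\circ x=\bot$, the composites $h\circ a$ and $h\circ b$ are constantly $\top$ on points, hence equal to $\top\circ{!}$ because $\1$ is a regular generator, hence $h=\top\circ{!}$ by the universal property of $P=\sum_{A}{\1}+\sum_{B}{\1}$, contradicting $h\circ x=\bot$. This is the same contradiction the paper runs for $R+R$; with it spelled out, your argument is complete.
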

\begin{proof}
An easy adaptation of the proof of~\cite[Lemma 4.1]{Richter1991} shows that $U$ preserves finite coproducts if, and only if, $U(X+X)\cong U(X)+U(X)$ for every $X$ in $\Set^T$. 
Fix $X$ in $\Set^T$, and let $I$ be a set such that there is a regular epimorphism $\epsilon\colon \sum_{I}{\1}\epi X$. 
Write $R\rightrightarrows \sum_{I}{\1}$ for the kernel pair of $\epsilon$. Since $U(\sum_{I}{\1}+\sum_{I}{\1})\cong U(\sum_{I}{\1})+U(\sum_{I}{\1})$ by Lemma~\ref{l:pt-cosquares-of-free}, $U(X+X)$ is isomorphic to the quotient of $U(\sum_{I}{\1})+U(\sum_{I}{\1})$ with respect to the equivalence relation $U(R+R)\subseteq (U(\sum_{I}{\1})+U(\sum_{I}{\1}))^2$. 
We claim that $U(R+R)\cong U(R)+U(R)$, from which it follows that $U(X+X)\cong U(X)+U(X)$.

Note that $R$ and $R+R$ belong to the category $\C$ by Lemma~\ref{l:partial-fullness-on-subobjects}. 
The obvious function $\pt{R}\cup \pt{R}\to \pt(R+R)$ is injective because two points belonging to different components of $R+R$ are separated by the coproduct of the constant morphisms $\overline{\bot}\colon R \xrightarrow{!}\1 \xrightarrow{\bot}\2$ and $\overline{\top}\colon R \xrightarrow{!}\1 \xrightarrow{\top}\2$. Thus, it remains to prove that the function $\pt{R}\cup \pt{R}\to \pt(R+R)$ is surjective. Suppose, by way of contradiction, that there exists $p\in \pt(R+R)\setminus (\pt{R}\cup\pt{R})$. In view of Claim~\ref{cl:separation-set-and-point}, there is a morphism $h\colon R+R\to \2$ such that $(\pt{h})^{-1}(\top)$ contains both copies of $\pt{R}$, but not $p$. In particular, the following diagram commutes.
\[\begin{tikzcd}
R \arrow{r} \arrow{dr}[swap]{\overline{\top}} & R+R \arrow{d}{h} & R \arrow{l} \arrow{dl}{\overline{\top}} \\
{} & \2 &
\end{tikzcd}\]
By the universal property of the coproduct, $h$ is the unique morphism making the diagram above commute. Whence, $h$ coincides with the constant morphism $R+R \xrightarrow{!}\1 \xrightarrow{\top}\2$, contradicting the fact that $h\circ p\neq \top$. This proves $U(R+R)\cong U(R)+U(R)$, thus concluding the proof.
\end{proof}
\begin{proposition}\label{p:C-pretopos}
The category $\Set^T$ is a pretopos, and $\C\cong \Set^T$.
\end{proposition}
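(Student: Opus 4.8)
The plan is to establish extensivity of $\Set^T$ --- whence the pretopos structure follows, since $\Set^T$ is exact, being monadic over $\Set$ --- and then to deduce $\C=\Set^T$ from bireflectivity. Throughout I would lean on the forgetful functor $U\colon\Set^T\to\Set$, which creates limits (as $\Set^T$ is monadic over $\Set$, and in particular complete), is conservative, and preserves finite coproducts by Lemma~\ref{l:U-preserves-finite-sums}. By the criterion recorded just after Definition~\ref{def:extensivity}, it suffices to show that finite coproducts in $\Set^T$ are universal and disjoint; and since $U$ creates the relevant limits and preserves coproducts, each verification will transport to the already extensive category $\Set$ and return via conservativity of $U$.

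For disjointness, given $X_1,X_2$ I would form the pullback $P$ of the injection $\iota_1\colon X_1\mono X_1+X_2$ along $\iota_2\colon X_2\mono X_1+X_2$. Applying $U$ yields the pullback of the coproduct injections $U\iota_1,U\iota_2$ in $\Set$, which is empty because $\Set$ is extensive; hence $UP=\emptyset$. Since $U$ preserves the empty coproduct, $U\0=\emptyset$, that is $T\emptyset=\emptyset$, so there is exactly one $T$-algebra with empty carrier, namely $\0$; therefore $P\cong\0$. For universality, given a morphism $f\colon Z\to X_1+X_2$ I would take the pullbacks $Z_1,Z_2$ of $f$ along $\iota_1,\iota_2$ together with the induced comparison $\theta\colon Z_1+Z_2\to Z$. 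Applying $U$ turns $\theta$ into the corresponding comparison map in $\Set$, which is an isomorphism by extensivity of $\Set$, so $\theta$ is an isomorphism because $U$ is conservative. Thus finite coproducts in $\Set^T$ are universal and disjoint, $\Set^T$ is extensive, and hence a pretopos.

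Finally, to obtain $\C\cong\Set^T$ I would invoke that a pretopos is balanced (every epimorphism there being regular, a morphism that is both monic and epic is a monic regular epimorphism, hence an isomorphism). By Lemma~\ref{l:bireflective} the subcategory $\C$ is bireflective, so every reflection morphism $\rho_X$ is simultaneously a monomorphism and an epimorphism, and therefore an isomorphism in the balanced category $\Set^T$. Consequently each object $X$ of $\Set^T$ is isomorphic to its reflection, which lies in $\C$; as $\C$ is a replete, full subcategory of $\Set^T$, this forces $\C=\Set^T$.

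The only point carrying genuine content beyond formal transport is the identification of the initial object of $\Set^T$ in the disjointness step: one must know that $\0$ has empty underlying set, equivalently $T\emptyset=\emptyset$, which is precisely the nullary instance of Lemma~\ref{l:U-preserves-finite-sums}. Once this is in hand, conservativity and limit-creation of $U$ reduce both extensivity conditions to the corresponding facts in $\Set$, and the collapse $\C=\Set^T$ is then purely formal.
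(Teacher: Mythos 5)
Your proposal is correct and follows essentially the same route as the paper's proof: extensivity of $\Set^T$ is transported from $\Set$ along the forgetful functor $U$ (which preserves and reflects limits and, by Lemma~\ref{l:U-preserves-finite-sums}, finite coproducts), and then balancedness of the resulting pretopos collapses the bireflective subcategory $\C$ onto all of $\Set^T$. You merely spell out the universality and disjointness checks, and the identification of the initial object, which the paper leaves implicit.
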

\begin{proof}
The forgetful functor $U\colon \Set^T\to \Set$ preserves and reflects limits, and it also preserves (and reflects) finite coproducts by Lemma~\ref{l:U-preserves-finite-sums}. Thus, the extensivity of $\Set$ entails the extensivity of $\Set^T$. This shows that $\Set^T$ is a pretopos, thus a balanced category. By Lemma~\ref{l:bireflective}, we get $\C\cong \Set^T$.
\end{proof}
We can finally show how Richter's theorem follows from the characterisation of the category of compact Hausdorff spaces and continuous maps provided in Theorem~\ref{t:main-characterisation-bis}.
\begin{proof}[of Theorem~\ref{t:Richter}]
Let $\C$ be a category satisfying the properties~\ref{richter1}--\ref{richter6} in the statement of the theorem. Then,  $\C$ is a complete and cocomplete pretopos by Proposition~\ref{p:C-pretopos}. Further, it is non-trivial and well-pointed by items~\ref{richter5}$a$ and~\ref{richter2}, respectively (cf.\ Lemma~\ref{l:1-terminal}). By Theorem~\ref{t:main-characterisation-bis}, in order to conclude that $\C$ is equivalent to $\KH$, it remains to prove that $\C$ is filtral. We claim that, for every set $I$, the copower $\sum_{I}{\1}$ is a filtral object in $\C$. Since $\1$ is a regular generator in $\C$, this will yield the desired conclusion.

Fix an arbitrary set $I$. We must prove that the map
\[
\phi\colon \Sub{\sum_{I}{\1}}\to \F\big(\B\big(\sum_{I}{\1}\big)\big), \ \ S\mapsto \{C\in \B\big(\sum_{I}{\1}\big)\mid S\leq C\}
\]
is a bounded lattice isomorphism, where $\F(\B(\sum_{I}{\1}))$ is the lattice of filters of the Boolean center of $\Sub{\sum_{I}{\1}}$. 
The map $\phi$ is readily seen to preserve arbitrary suprema. Thus, it suffices to show that it is a bijection. 

Let $S\in\Sub{\sum_{I}{\1}}$ be an arbitrary subobject. We prove that $S=\bigwedge{\phi(S)}$. Note that the latter infimum exists because $\C$ is complete, whence mono-complete. It is immediate that $S\leq \bigwedge{\phi(S)}$. For the converse inequality it is enough to show that, whenever $p\notin \pt{S}$, there is $C\in\B(\sum_{I}{\1})$ such that $S\leq C$ and $p\notin C$. By Claim~\ref{cl:separation-set-and-point}, there is a morphism $\xi_p\colon \sum_{I}{\1}\to \2$ such that $\pt{S}\subseteq (\pt{\xi_p})^{-1}(\top)$ and $p\notin (\pt{\xi_p})^{-1}(\top)$. Then, $C=\xi_p^*(\top)$ satisfies the required properties. This shows that $\phi$ is injective. 
For surjectivity, consider a filter $F\in \F(\B(\sum_{I}{\1}))$. We claim that $F=\phi(\bigwedge{F})$. It is clear that $F\subseteq \phi(\bigwedge{F})$, hence it is enough to show that every $D\in \B(\sum_{I}{\1})$ satisfying $\bigwedge{F}\leq D$ belongs to $F$. Fix such a complemented subobject $D$, and write $D^c$ for its complement. Then,
\[
D^c\leq \bigvee{\{C^c\mid C\in F\}}.
\]
Each subobject $C^c$, for $C\in F$, corresponds to a morphism $\xi_{C^c}\colon \sum_{I}{\1}\to\2$ satisfying $\pt{C^c}=(\pt{\xi_{C^c}})^{-1}(\top)$. Applying Lemma~\ref{l:compactness} to the object $\bigvee{\{C^c\mid C\in F\}}$, we find finitely many elements $C_1,\ldots,C_n\in F$ such that
\[
\bigcup_{C\in F}{(\pt{\xi_{C^c}})^{-1}(\top)}=\bigcup_{i=1}^n{(\pt{\xi_{C_i^c}})^{-1}(\top)},
\]
so that $D^c\leq C_1^c\vee\cdots\vee C_n^c$. Therefore, $C_1\wedge\cdots\wedge C_n\leq D$, showing that $D\in F$.
\end{proof}

\section{Decidable objects and Stone spaces}\label{s:decidable}
In this section, we give a characterisation of the category $\BStone$ of Stone spaces and continuous maps in the spirit of Theorem~\ref{t:main-characterisation-bis}. This is Theorem~\ref{t:Stone-charact} below. 
We pointed out in Example~\ref{ex:examples-pretoposes} that $\BStone$ is a positive and coherent category, but it is not exact.  Accordingly, in this section we drop the exactness condition. Before proceeding, recall that any positive and coherent category is extensive. In the following, we will use this fact without further notice.

We start by preparing two lemmas. The first one states that in a well-pointed, positive and coherent category, every non-initial object has a point. The second one shows that in such a category the lattices of subobjects are atomic precisely when the functor of points is conservative.
\begin{lemma}\label{l:positive-has-points}
Let $\X$ be a non-trivial positive and coherent category which is well-pointed. For every non-initial object $X$ in $\X$, the unique morphism $X\to\1$ is a retraction.
\end{lemma}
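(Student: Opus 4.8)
The plan is to prove the equivalent statement that every object $X$ with $\pt{X}=\emptyset$ is initial. Indeed, a point $s\colon\1\to X$ is automatically a section of $!\colon X\to\1$, since $\1$ being terminal forces $!\circ s=\mathrm{id}_{\1}$; hence $!\colon X\to\1$ is a retraction exactly when $X$ has a point, and it therefore suffices to show that a pointless object must be initial.

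First I would use well-pointedness in its contrapositive form: two parallel morphisms $f,g\colon X\rightrightarrows Y$ are equal whenever $f\circ p=g\circ p$ for every point $p\colon\1\to X$. If $\pt{X}=\emptyset$ this hypothesis is vacuously satisfied, so \emph{any} two morphisms with domain $X$ coincide. I would apply this to the two coproduct injections $\iota_1,\iota_2\colon X\to X+X$, concluding that $\iota_1=\iota_2$.

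Next I would invoke positivity. By definition, finite coproducts in $\X$ are disjoint, which means precisely that the injections $\iota_1,\iota_2$ are monomorphisms and that the pullback of $\iota_1$ along $\iota_2$ is the initial object $\0$. On the other hand, writing $\iota\colon X\mono X+X$ for the common value $\iota_1=\iota_2$, the pullback of the monomorphism $\iota$ along itself is $X$, with both legs the identity. Comparing these two evaluations of the same pullback yields $X\cong\0$, as required.

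The individual steps are routine; the only point that needs care is the disjointness bookkeeping — that disjoint coproducts genuinely force the injections to be monic with intersection $\0$, and that the self-pullback of a monomorphism returns its domain. I do not anticipate a real obstacle here, since the argument is carried entirely by well-pointedness together with disjointness of finite coproducts, the requisite pullbacks being supplied by the finite limits present in any coherent category. Note in particular that exactness is nowhere used, which is exactly why this lemma survives the passage from $\KH$ to $\BStone$.
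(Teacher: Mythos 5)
Your proof is correct, and it rests on exactly the two pillars the paper uses: well-pointedness makes any two morphisms out of a pointless object coincide (equivalently, $\0\to X$ is an epimorphism), and disjointness of $X+X$ then forces $X\cong\0$. The difference is purely one of packaging: the paper routes the second pillar through extremal monomorphisms --- it shows that $\0\to\1$ is an extremal mono precisely because the two injections $X\rightrightarrows X+X$ are distinct for non-initial $X$, and then concludes that $\0\to X$, being an extremal mono and an epi, is an isomorphism --- whereas you take the contrapositive of the same fact directly. Your version is arguably cleaner since it needs no auxiliary notions. One small remark on the ``disjointness bookkeeping'' you flag: you do not actually need the injections to be monic. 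Once the pullback of $\iota$ along itself is $\0$, the diagonal gives a morphism $X\to\0$, and strictness of the initial object in a coherent category (Lemma \ref{l:cohe-strict-initial}) already yields $X\cong\0$; so the argument is robust even under the paper's minimal formulation of disjointness.
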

\begin{proof}
Recall that a monomorphism $m$ is \emph{extremal} if, whenever it is decomposed as $m=f\circ e$ with $e$ epic, then $e$ is an isomorphism. A moment's reflection shows that $(1)$~if $g\circ f$ is an extremal mono, then so is $f$; $(2)$ every extremal mono that is also an epi must be an isomorphism. We claim that the unique morphism $\0\to\1$ in $\X$ is an extremal mono. It is not difficult to see that $\0\to\1$ is an extremal mono if, and only if, for every non-initial object $X$ there is an object $Y$ and two distinct morphisms $X\rightrightarrows Y$. Since $\X$ is positive, we can take $Y=X+X$ along with the coproduct injections $X\rightrightarrows X+X$.

Now, suppose $\pt{X}=\emptyset$. We must prove that $X\cong \0$. Since $\X$ is well-pointed, the canonical morphism $\sum_{\pt{X}}{\1}\to X$ is an epimorphism. But $\sum_{\emptyset}{\1}=\0$, showing that the unique morphism $\0\to X$ is epic. Because the composition $\0\to X\to \1$ is an extremal mono, so is $\0\to X$. Therefore, $\0\to X$ is both an extremal mono and an epimorphism, whence an isomorphism.
\end{proof}
\begin{lemma}\label{l:Spec-is-conservative}
Let $\X$ be a non-trivial positive coherent category which is well-pointed.
The following statements are equivalent:
\begin{enumerate}
\item $\Sub{X}$ is an atomic lattice for every $X$ in $\X$;
\item the functor $\pt\colon \X\to\Set$ is conservative.
\end{enumerate}
\end{lemma}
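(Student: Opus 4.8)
The plan is to use throughout that, under these hypotheses, Lemma \ref{l:positive-has-points} guarantees that every non-initial object has a point, so that the standing hypotheses of Lemma \ref{l:points-ex-mono} are met and that lemma applies: the atoms of $\Sub{X}$ are exactly the points of $X$, and $\pt$ preserves regular epimorphisms. I would also record that $\pt=\Hom_{\X}(\1,-)$ is faithful (well-pointedness), preserves finite limits --- hence monomorphisms and binary meets of subobjects, so that $\pt(S_1\wedge S_2)=\pt{S_1}\cap\pt{S_2}$ inside $\pt{X}$ --- and reflects both monomorphisms and epimorphisms. These are the only structural facts the argument needs.

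For $(2)\Rightarrow(1)$, assume $\pt$ is conservative. First I would upgrade $\Vs\colon\Sub{X}\to\P(\pt{X})$, $S\mapsto\pt{S}$, to an order-embedding, i.e.\ show that $\pt{S_1}\subseteq\pt{S_2}$ forces $S_1\leq S_2$. Indeed, the inclusion $S_1\wedge S_2\mono S_1$ is sent by $\pt$ to the inclusion $\pt{S_1}\cap\pt{S_2}=\pt{S_1}\hookrightarrow\pt{S_1}$, which is a bijection; conservativity makes the morphism an isomorphism, whence $S_1=S_1\wedge S_2\leq S_2$. Atomicity then follows formally: given $S\in\Sub{X}$, it is an upper bound of the set of atoms (points) below it, namely $\pt{S}$; and any other upper bound $S'$ satisfies $\pt{S}\subseteq\pt{S'}$, hence $S\leq S'$ by the order-embedding property. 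Thus $S$ is the least upper bound of the atoms below it, so $\Sub{X}$ is atomic. Crucially this never requires $\Sub{X}$ to be complete, since the supremum is produced as $S$ itself.

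For $(1)\Rightarrow(2)$, assume every $\Sub{X}$ is atomic and let $f\colon X\to Y$ be such that $\pt{f}$ is a bijection; I must show $f$ is an isomorphism. Take the (regular epi, mono) factorisation $f=m\circ e$ with $e\colon X\epi I$ and $m\colon I\mono Y$. Since $\pt{e}$ is surjective and $\pt{m}$ is injective while $\pt{f}=\pt{m}\circ\pt{e}$ is bijective, both $\pt{e}$ and $\pt{m}$ are bijections. To see that $e$ is an isomorphism, consider its kernel pair $r_1,r_2\colon R\rightrightarrows X$: for every point $z$ of $R$ we have $e\circ r_1\circ z=e\circ r_2\circ z$, and $\pt{e}$ is injective, so $r_1\circ z=r_2\circ z$; well-pointedness then gives $r_1=r_2$, so $e$ is monic and therefore, being a regular epimorphism, an isomorphism. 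To see that $m$ is an isomorphism, note that surjectivity of $\pt{m}$ means every point of $Y$ factors through $I$, i.e.\ every atom of $\Sub{Y}$ lies below the subobject $I$; by atomicity the top element, which equals the supremum of all atoms, therefore satisfies $Y\leq I$, so $I=Y$ and $m$ is an isomorphism. Hence $f=m\circ e$ is an isomorphism, and $\pt$ is conservative.

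The main obstacle is conceptual rather than computational: coherent categories are not balanced, so in $(1)\Rightarrow(2)$ one cannot simply invoke ``mono $+$ epi $=$ iso'' after observing that $\pt{f}$ is bijective. The factorisation splits the difficulty into two genuinely different tasks --- trivialising the kernel pair of $e$, which is exactly where well-pointedness is used, and collapsing the monomorphism $m$, which is exactly where atomicity is used --- so that the two hypotheses enter in distinct places. The corresponding delicate point in $(2)\Rightarrow(1)$ is to avoid any completeness assumption on $\Sub{X}$; the key observation is that conservativity makes $\Vs$ order-reflecting, which lets each subobject serve as the supremum of the atoms below it without having to construct that supremum independently.
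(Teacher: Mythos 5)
Your proof is correct and follows essentially the same route as the paper: the paper also reduces the statement to ``$\pt$ is conservative on monomorphisms'' (which is your order-embedding/atomicity equivalence, asserted there more tersely via Lemmas \ref{l:points-ex-mono} and \ref{l:positive-has-points}) and then uses the (regular epi, mono) factorisation exactly as you do, the only cosmetic difference being that the paper gets ``$e$ is monic'' directly from faithfulness reflecting monos rather than via the kernel pair.
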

\begin{proof}
Note that, by Lemmas~\ref{l:points-ex-mono} and~\ref{l:positive-has-points}, $\Sub{X}$ is an atomic lattice for every $X$ in~$\X$ if, and only if, whenever $m$ is a mono in $\X$ such that $\pt{m}$ is a bijection, $m$ is an isomorphism. That is, if and only if $\pt$ is ``conservative on monomorphisms''. We claim that the latter is equivalent to the functor $\pt\colon \X\to\Set$ being conservative \emph{tout court}.
For the non-trivial direction, assume $\pt$ is conservative on monos and consider a morphism $f$ in $\X$ along with its (regular epi, mono) factorisation $m\circ e$. Suppose \[\pt{f}=\pt{m}\circ \pt{e}\] is an isomorphism. We claim that both $e$ and $m$ are isomorphisms. Since $\pt{f}$ is an iso, $\pt{m}$ is an epi. But $\pt{m}$ is also a mono because $\pt$ preserves limits, thus it is a bijection. Since $\pt$ is conservative on monos, $m$ is an iso.
On the other hand, since $\pt{f}$ is an iso, $\pt{e}$ is a mono. The functor $\pt$ being faithful, it reflects monos. We conclude that $e$ is both a mono and a regular epi in $\X$, whence an iso. Therefore, $f$ is an isomorphism.
\end{proof}
Let $\C$ be an extensive category with finite limits. An object $X$ in $\C$ is \emph{decidable} provided the diagonal morphism $\delta_X\colon X\to X\times X$ is complemented, i.e.\ there exists a morphism $\varepsilon_X\colon Y\to X\times X$ in $\C$ such that
\[\begin{tikzcd} X \arrow{r}{\delta_X} & X\times X & Y \arrow{l}[swap]{\varepsilon_X} \end{tikzcd}\]
 is a coproduct diagram. The class of decidable objects contains the initial and terminal objects, and is closed under taking subobjects, finite products, and finite coproducts. For instance, the decidable objects in $\Top$ are the discrete spaces, while in $\KH$ they are the finite discrete spaces. See~\cite{CJ96} for a proof of these statements, and for the basics of the theory of decidable objects. Throughout, we denote by $\dec{\C}$ the full subcategory of $\C$ on the decidable objects. 
\begin{lemma}\label{l:preserves-decidability}
Let $\X$ be a non-trivial positive coherent category which is well-pointed and filtral. The functor $\Spec\colon \X\to \KH$ from~\eqref{eq:functor-Spec-to-KH} is well-defined and preserves decidable objects.
\end{lemma}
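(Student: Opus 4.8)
The plan is to show two separate things: that $\Spec$ is well-defined, and that it preserves decidable objects. For the first part, I note that $\X$ is a non-trivial, well-pointed, positive coherent category which is filtral. By Lemma \ref{l:positive-has-points}, every non-initial object $X$ admits a retraction $X\to\1$, so Assumption \ref{assump:s4} is satisfied. Combined with filtrality and Lemma \ref{l:filtral-co-frame} (which guarantees mono-completeness), Lemma \ref{l:filtrality} applies to give the co-restricted functor $\Spec\colon\X\to\KH$ of \eqref{eq:functor-Spec-to-KH}. This part is essentially a bookkeeping verification that the hypotheses needed to run the earlier machinery are all in place.

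For the second part, I would unwind the definition of decidability. An object $X$ of $\X$ is decidable exactly when its diagonal $\delta_X\colon X\to X\times X$ is a complemented subobject. The key fact to exploit is that $\Spec$ is a \emph{coherent functor}: although Lemma \ref{l:spec-functor-coherent} is stated for the pretopos case, the relevant part of its proof (that $\Spec$ preserves finite limits via Lemma \ref{l:spec-pres-limits}, preserves regular epis via item $2$ of Lemma \ref{l:points-ex-mono}, and preserves finite coproducts and hence finite joins of subobjects) goes through here, since $\X$ is positive coherent and all the cited lemmas hold under the present hypotheses. In particular, $\Spec$ preserves finite products, so $\Spec(X\times X)\cong\Spec{X}\times\Spec{X}$, and it sends the diagonal $\delta_X$ to the diagonal $\delta_{\Spec{X}}$ of $\Spec{X}$ in $\KH$.

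Now the crucial step: a coherent functor preserves complemented subobjects, because complementation is a statement expressible purely in the lattice structure of $\Sub{(X\times X)}$ (a subobject $S$ is the complement of $\delta_X$ iff $S\vee\delta_X$ is the top and $S\wedge\delta_X$ is the bottom), and $\Spec$ preserves finite meets, finite joins, top and bottom elements of subobject lattices. Hence if $\delta_X$ is complemented in $\Sub{(X\times X)}$, its image $\Spec{\delta_X}=\delta_{\Spec{X}}$ is complemented in $\Sub{(\Spec{X}\times\Spec{X})}=\K(\Spec{X}\times\Spec{X})$. This says precisely that the diagonal of $\Spec{X}$ is clopen, which is the condition for $\Spec{X}$ to be a decidable object of $\KH$ (equivalently, $\Spec{X}$ is finite discrete). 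Therefore $\Spec$ carries decidable objects to decidable objects.

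**The main obstacle** I anticipate is verifying carefully that $\Spec$ preserves complemented subobjects, i.e.\ that the equations $S\vee\delta_X=\top$ and $S\wedge\delta_X=\bot$ are transported correctly. This requires knowing that $\Spec$ preserves \emph{both} finite joins (established in the coherent-functor argument, ultimately resting on Claim \ref{claim:spec-fin-coprod} on preservation of finite coproducts, which uses positivity and universality of coproducts) and finite meets (from preservation of finite limits), together with top and bottom elements of subobject lattices. Since a complement is unique when it exists, once $\Spec$ is shown to respect the lattice operations of $\Sub$ it follows formally that the $\Spec$-image of a complemented subobject is complemented with complement the $\Spec$-image of the original complement. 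The only subtlety is checking that the coherent-functor properties from Section \ref{s:main} survive dropping exactness; but as noted, each invoked lemma (Lemmas \ref{l:spec-pres-limits}, \ref{l:points-ex-mono}, and Claim \ref{claim:spec-fin-coprod}) holds already under the weaker positive coherent hypothesis, so no genuine difficulty arises.
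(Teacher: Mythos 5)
Your proof is correct and follows essentially the same route as the paper's: well-definedness is obtained from Lemma \ref{l:positive-has-points} together with Lemma \ref{l:filtrality}, and preservation of decidability from the facts that $\Spec$ preserves finite limits (Lemma \ref{l:spec-pres-limits}) and finite coproducts (the argument of Claim \ref{claim:spec-fin-coprod}, which indeed only needs positivity and coherence). The only cosmetic difference is that the paper applies preservation of coproducts directly to the coproduct diagram $X\to X\times X\leftarrow Y$, whereas you re-express complementation via the lattice operations on subobjects; both hinge on exactly the same ingredients.
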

\begin{proof}
To see that the functor $\Spec\colon \X\to\KH$ is well-defined, it is enough to verify that $\X$ satisfies the properties in Assumption~\ref{assump:s4}. In turn, this follows from Lemma~\ref{l:positive-has-points}. 

For the second part of the statement, let $X$ be a decidable object in $\X$, and $Y\to X\times X$ the complement of the diagonal of $X$. Since $\Spec$ preserves finite limits by Lemma~\ref{l:spec-pres-limits}, the diagonal of $X\times X$ is mapped to the diagonal of $\Spec{X}\times \Spec{X}$, and admits $\Spec{Y}$ as a complement because $\Spec$ preserves finite coproducts (the same proof as for Claim~\ref{claim:spec-fin-coprod} applies here). Thus, the functor $\Spec$ preserves decidable objects.
\end{proof}
\begin{proposition}\label{p:decidable-finite-sets}
Let $\X$ be a non-trivial positive coherent category which is well-pointed and filtral. Further, assume $\Sub{X}$ is an atomic lattice for every $X$ in $\X$. 
Then the functor $\Spec\colon \X\to \KH$ restricts to an equivalence between $\dec{\X}$ and the category $\Setfin$ of finite sets and functions.
\end{proposition}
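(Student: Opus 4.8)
The plan is to show that the restricted functor $\Spec\colon\dec{\X}\to\Setfin$ is fully faithful and essentially surjective. First recall that $\Spec\colon\X\to\KH$ is well-defined and preserves decidable objects by Lemma~\ref{l:preserves-decidability}, and that the decidable objects of $\KH$ are exactly the finite discrete spaces, so that $\dec{\KH}$ is equivalent to $\Setfin$; hence $\Spec$ genuinely restricts to a functor $\dec{\X}\to\Setfin$. Faithfulness is immediate: by well-pointedness $\pt$ is faithful, and since $\pt=|-|\circ\Spec$ factors through $\Spec$ via the underlying-set functor $|-|\colon\KH\to\Set$, the functor $\Spec$ is faithful as well.

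The second step is to describe the decidable objects of $\X$ concretely, and this is where the atomicity hypothesis enters. By Lemma~\ref{l:Spec-is-conservative} the assumption that every $\Sub{X}$ is atomic is equivalent to the functor $\pt\colon\X\to\Set$ being conservative; since $|-|\colon\KH\to\Set$ is also conservative and $\pt=|-|\circ\Spec$, it follows that $\Spec$ is conservative. Now let $X$ be decidable. Then $\Spec{X}$ is a decidable object of $\KH$, hence a \emph{finite} discrete space, so that $\pt{X}$ is a finite set. I would then consider the canonical comparison morphism $c\colon\sum_{\pt{X}}{\1}\to X$, where the copower is finite and therefore exists because $\X$ is positive. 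Since $\Spec$ preserves finite coproducts (as in the proof of Lemma~\ref{l:preserves-decidability}), the morphism $\Spec{c}$ is a continuous bijection between finite discrete spaces, hence an isomorphism in $\KH$; by conservativity of $\Spec$, the morphism $c$ itself is an isomorphism. Thus every decidable object of $\X$ is of the form $\sum_{\pt{X}}{\1}$ with $\pt{X}$ finite.

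Fullness then follows from the universal property of finite coproducts. For decidable $X\cong\sum_{\pt{X}}{\1}$ and arbitrary $Y$, morphisms $X\to Y$ correspond bijectively to families of points $(\1\to Y)_{p\in\pt{X}}$, that is, to functions $\pt{X}\to\pt{Y}$; and one checks that this bijection is precisely $f\mapsto\pt{f}$, since the points of $\sum_{\pt{X}}{\1}$ are exactly its coproduct injections. As every function between the finite discrete spaces $\Spec{X}$ and $\Spec{Y}$ is continuous, this shows that $\Spec$ is full. Finally, essential surjectivity is clear: any finite set $S$ is the underlying set of $\Spec(\sum_{S}{\1})$, and $\sum_{S}{\1}$ is decidable, being a finite coproduct of copies of the decidable object $\1$. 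Hence $\Spec\colon\dec{\X}\to\Setfin$ is an equivalence. The one genuinely load-bearing step is the identification of the decidable objects with the finite copowers of $\1$: it is here that finiteness of $\pt{X}$ (from preservation of decidability), conservativity of $\Spec$ (from atomicity), and preservation of finite coproducts must be combined; once this is in place, both fullness and essential surjectivity are routine.
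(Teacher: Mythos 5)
Your proposal is correct, but it reaches fullness by a genuinely different route than the paper. The paper proves a stronger intermediate claim: for \emph{arbitrary} $X$ in $\X$ and any continuous $f\colon\Spec{X}\to\Spec{Y}$ with $\Spec{Y}$ finite discrete, $f$ lifts to a morphism of $\X$; this is obtained by partitioning $\Spec{X}$ into the clopen fibres of $f$, transporting these to complemented subobjects $S_1,\ldots,S_n$ of $X$ via Lemmas \ref{l:atomic-fixed-point} and \ref{l:filtral-co-frame}, writing $X\cong\sum_{i=1}^n S_i$, and gluing the constant morphisms $S_i\to\1\to Y$. You instead first establish the structural fact that every decidable object is a finite copower of $\1$: since $\Spec{X}$ is finite discrete, $\pt{X}$ is finite, the comparison $c\colon\sum_{\pt{X}}\1\to X$ exists by positivity, $\Spec{c}$ is a bijection of finite discrete spaces because $\Spec$ preserves finite coproducts, and $c$ is then invertible because atomicity makes $\pt$ (hence $\Spec$) conservative via Lemma \ref{l:Spec-is-conservative}. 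After that, fullness is immediate from the universal property of the coproduct. Both arguments are sound and use the same ingredients (atomicity, preservation of finite coproducts), just packaged differently: yours yields a cleaner description of $\dec{\X}$ as the finite copowers of $\1$, while the paper's version of fullness (its Claim \ref{claim:fullness-finite}) applies to maps out of non-decidable objects and is reused verbatim in the proof of Proposition \ref{p:prodecidable-stone}; if you later need that more general statement, your argument would have to be supplemented by essentially the paper's clopen-decomposition step.
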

\begin{proof}
Since every decidable object in $\KH$ is a finite discrete space, Lemma~\ref{l:preserves-decidability} entails that the functor $\Spec\colon \X\to\KH$ restricts to a functor $\Spec\colon \dec{\X}\to \Setfin$. Because the former is faithful, so is the latter. Fullness follows at once from the following claim:
\begin{claim}\label{claim:fullness-finite}
For every continuous function $f\colon \Spec{X}\to\Spec{Y}$, with $\Spec{Y}$ a finite discrete space, there is a morphism $g\colon X\to Y$ in $\X$ such that $\Spec{g}=f$.
\end{claim}
\begin{proof}[of Claim]
Since $\Spec{Y}$ is a finite discrete space, $f$ induces a partition of $\Spec{X}$ into finitely many clopens. By Lemma~\ref{l:atomic-fixed-point}, these clopens correspond to complemented subobjects $S_1,\ldots,S_n$ of $X$. Thus, $X\cong\sum_{i=1}^n{S_i}$. For each $i\in\{1,\ldots,n\}$, let $p_i\in \pt{Y}$ be the value that $f$ assumes on the clopen corresponding to $S_i$. Define the morphism
$
g_i=p_i \circ {!} \colon S_i\to \1\to Y.
$
Upon writing $g=\sum_{i=1}^n{g_i}\colon X\to Y$, we see that $\Spec{g}=f$.
\end{proof}
It remains to show that the functor $\Spec\colon \dec{\X}\to\Setfin$ is essentially surjective. Suppose $Y$ is a discrete space with $n$ elements. 
Finite coproducts in $\X$ are disjoint and universal, thus every coproduct injection of $\sum_{i=1}^n{\1}$ yields a distinct point of $\sum_{i=1}^n{\1}$, and every point is a coproduct injection.
Hence, $Y\cong \Spec{\sum_{i=1}^n{\1}}$. The object $\sum_{i=1}^n{\1}$ is decidable because it is a finite coproduct of decidable objects. 
\end{proof}
Call \emph{pro-decidable} an object of $\X$ which is the codirected limit of decidable objects, and write $\prodec{\X}$ for the full subcategory of $\X$ on the pro-decidable objects. It turns out that $\prodec{\X}$ is equivalent to the category of Stone spaces and continuous maps:
\begin{proposition}\label{p:prodecidable-stone}
Let $\X$ be a non-trivial positive and coherent category which is well-pointed, filtral and complete. Further, assume that $\Sub{X}$ is an atomic lattice for every $X$ in $\X$. 
The functor $\Spec\colon \X\to \KH$ restricts to an equivalence between $\prodec{\X}$ and $\BStone$.
\end{proposition}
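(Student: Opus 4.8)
The plan is to show that $\Spec\colon\X\to\KH$ restricts to a full, faithful and essentially surjective functor $\prodec{\X}\to\BStone$, building on the equivalence $\dec{\X}\simeq\Setfin$ established in Proposition \ref{p:decidable-finite-sets}. First I would check that $\Spec$ actually lands in $\BStone$ on pro-decidable objects. Since $\X$ is positive coherent and well-pointed, the hypotheses of Assumption \ref{assump:s4} hold by Lemma \ref{l:positive-has-points}, so $\Spec$ is well-defined (cf.\ Lemma \ref{l:preserves-decidability}); being moreover filtral and complete, $\X$ is such that $\Spec$ preserves all limits by Lemma \ref{l:spec-pres-limits}. Hence if $X\cong\lim_i D_i$ is a codirected limit of decidable objects, then $\Spec{X}\cong\lim_i\Spec{D_i}$ is a codirected limit in $\KH$ of finite discrete spaces (Proposition \ref{p:decidable-finite-sets}), which is exactly a Stone space. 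Thus $\Spec$ restricts to a functor $\prodec{\X}\to\BStone$, and it is faithful because $\Spec\colon\X\to\KH$ is (Theorem \ref{t:Spec-topological-lifting}).

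Next I would prove fullness. Given pro-decidable objects $X,Y$ with $Y\cong\lim_j E_j$ a codirected limit of decidable objects $E_j$ and limit projections $\pi_j\colon Y\to E_j$, and given a continuous map $f\colon\Spec{X}\to\Spec{Y}$, I set $f_j=\Spec{\pi_j}\circ f\colon\Spec{X}\to\Spec{E_j}$. Each $\Spec{E_j}$ is finite discrete, so the argument of Claim \ref{claim:fullness-finite}---which uses only that the target is finite discrete, together with the correspondence between clopens of $\Spec{X}$ and complemented subobjects of $X$ furnished by Lemmas \ref{l:atomic-fixed-point} and \ref{l:filtral-co-frame}---produces morphisms $g_j\colon X\to E_j$ with $\Spec{g_j}=f_j$. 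For every transition morphism $t\colon E_j\to E_{j'}$ of the diagram one has $\Spec(t\circ g_j)=\Spec{t}\circ f_j=f_{j'}=\Spec{g_{j'}}$, whence $t\circ g_j=g_{j'}$ by faithfulness of $\Spec$. The family $(g_j)$ is therefore a cone, inducing a unique $g\colon X\to Y$ in the complete category $\X$ with $\pi_j\circ g=g_j$; since $\Spec$ preserves the limit $Y$ and the maps $\Spec{g}$ and $f$ have the same components $f_j$, I conclude $\Spec{g}=f$.

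For essential surjectivity I would use that every Stone space $S$ is a codirected limit $S\cong\lim_i F_i$ of finite discrete spaces in $\KH$. Viewing this as a diagram in $\Setfin$ and transporting it through a pseudo-inverse of the equivalence $\Spec\colon\dec{\X}\to\Setfin$ of Proposition \ref{p:decidable-finite-sets}, I obtain a codirected diagram $(D_i)$ in $\dec{\X}$ with $\Spec{D_i}\cong F_i$ compatibly. Completeness of $\X$ lets me form $X=\lim_i D_i$, which is pro-decidable by definition, and $\Spec{X}\cong\lim_i\Spec{D_i}\cong\lim_i F_i\cong S$ because $\Spec$ preserves limits. Hence $S$ lies in the essential image, and $\Spec\colon\prodec{\X}\to\BStone$ is an equivalence.

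The step I expect to require the most care is fullness: a morphism into a pro-decidable object $Y$ must be reconstructed from its finite-discrete components, and the delicate point is verifying that the componentwise lifts assemble into a genuine cone in $\X$. This is exactly where faithfulness of $\Spec$ is essential, as it upgrades the identities $\Spec(t\circ g_j)=\Spec{g_{j'}}$ to the equalities $t\circ g_j=g_{j'}$ that a cone demands. A secondary matter to nail down is the standard but not wholly trivial identification of codirected limits of finite discrete spaces in $\KH$ with Stone spaces, together with the observation that the proof of Claim \ref{claim:fullness-finite} does not in fact use that its source object is decidable---only that its target is finite discrete---so that it legitimately applies to arbitrary pro-decidable $X$.
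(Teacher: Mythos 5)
Your proposal is correct and follows essentially the same route as the paper: restrict the functor via Lemmas \ref{l:preserves-decidability} and \ref{l:spec-pres-limits}, obtain fullness by lifting the finite-discrete components of a map with Claim \ref{claim:fullness-finite}, and obtain essential surjectivity by realising Stone spaces as codirected limits of finite discrete spaces and transporting through the equivalence of Proposition \ref{p:decidable-finite-sets}. The only (harmless) divergence is in the fullness step, where you work with the definitional presentation $Y\cong\lim_j E_j$ and check cone-compatibility of the lifts by faithfulness, whereas the paper reconstructs the codirected system from the finite discrete quotients of $\Spec{Y}$ and invokes conservativity of $\Spec$ (Lemma \ref{l:Spec-is-conservative}) to reflect that limit back to $\X$.
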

\begin{proof}
The functor $\Spec\colon \X\to\KH$ restricts to a functor $\Spec\colon \prodec{\X}\to \BStone$ by Lemmas~\ref{l:spec-pres-limits} and~\ref{l:preserves-decidability}. Since the former is faithful, so is the latter. Every Stone space is the codirected limit of finite discrete spaces. Further, each finite discrete space is isomorphic to one of the form $\Spec{X}$, for $X$ in $\dec{\X}$, by Proposition~\ref{p:decidable-finite-sets}. Because $\X$ is complete by assumption, and $\Spec$ preserves limits by Lemma~\ref{l:spec-pres-limits}, we deduce that $\Spec\colon \prodec{\X}\to \BStone$ is essentially surjective. To conclude the proof, we must show that it is also full.

Assume $f\colon \Spec{X}\to\Spec{Y}$ is a continuous function and $\Spec{Y}$ is a Stone space. Then, $f$ is uniquely determined by its compositions with the quotients onto the finite discrete images of $\Spec{Y}$. Such finite images are in the essential range of the functor $\Spec\colon \dec{\X}\to\Setfin$, so they are of the form $p_i\colon \Spec{Y}\to\Spec{Y_i}$, with each $Y_i$ decidable. Writing $f_i=p_i\circ f$, the function $f$ is determined by the cone 
\[\{f_i\colon \Spec{X}\to \Spec{Y_i}\mid i\in I\}.\]
By Claim~\ref{claim:fullness-finite}, for each $f_i$ there is $\phi_i\colon X\to Y_i$ such that $\Spec{\phi_i}=f_i$. Similarly, for each $p_i\colon \Spec{Y}\to\Spec{Y_i}$ there is $\pi_i\colon Y\to Y_i$ with $\Spec{\pi_i}=p_i$. The functor $\pt$ is conservative by Lemma~\ref{l:Spec-is-conservative}, hence so is $\Spec$. It follows that $\Spec$ reflects limits. That is, the limit of the codirected system $\{\pi_i\colon Y\to Y_i\mid i\in I\}$ in $\X$ is $Y$. Let $g\colon X\to Y$ be the morphism induced by the cone $\{\phi_i\colon X\to Y_i\mid i\in I\}$ in $\X$. We have \[p_i\circ \Spec{g}=\Spec{(\pi_i\circ g)}=\Spec{\phi_i}=f_i\] for every $i\in I$, whence $\Spec{g}=f$. This concludes the proof.
\end{proof}
Recall our characterisation of the category of compact Hausdorff spaces, up to equivalence, as the unique non-trivial pretopos which is well-pointed, filtral, and admits all set-indexed copowers of its terminal object (Theorem~\ref{t:main-characterisation-bis}). In the same spirit, we obtain the following characterisation of the category of Stone spaces and continuous maps, where by a \emph{strongly filtral} coherent category we understand a coherent category in which every object is filtral.
\begin{theorem}\label{t:Stone-charact}
Up to equivalence, $\BStone$ is the unique non-trivial positive and coherent category which is well-pointed, strongly filtral and complete.
\end{theorem}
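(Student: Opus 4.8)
The plan is to verify that $\BStone$ satisfies the stated properties and then, for uniqueness, to reduce everything to Proposition~\ref{p:prodecidable-stone} by showing that strong filtrality forces \emph{every} object to be pro-decidable. That $\BStone$ qualifies is essentially recorded already: it is non-trivial, positive and coherent (Examples~\ref{ex:coherent-categories} and~\ref{ex:examples-pretoposes}), and well-pointed since the points of a Stone space separate continuous maps. It is strongly filtral because each Stone space is a filtral object of $\KH$ (Example~\ref{ex:KH-filtral}), and it is complete as Stone spaces are closed under products and closed subspaces inside $\KH$.

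For the converse, let $\X$ be a non-trivial positive and coherent category which is well-pointed, strongly filtral and complete. Since every object $X$ is filtral, $\Sub{X}\cong\F(\B(X))$, and this lattice is atomic because every filter of a Boolean algebra is an intersection of ultrafilters; thus $\Sub{X}$ is atomic for every $X$. Consequently the hypotheses of Proposition~\ref{p:prodecidable-stone} are met, so $\Spec$ restricts to an equivalence $\prodec{\X}\simeq\BStone$; moreover $\pt$, and hence $\Spec$, is conservative by Lemma~\ref{l:Spec-is-conservative}. Filtrality of each $X$ also gives, by the argument in the proof of Lemma~\ref{l:filtrality}, that $\Spec{X}$ is homeomorphic to the Stone space dual to $\B(X)$, so $\Spec\colon\X\to\BStone$ is itself well-defined. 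It therefore suffices to prove that every object of $\X$ is pro-decidable, i.e.\ that $\X=\prodec{\X}$.

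To this end, fix $X$ in $\X$ and consider the finite partitions of $X$ into complemented subobjects, ordered by refinement. Since $\X$ is extensive, a complemented partition $\{S_1,\dots,S_n\}$ yields a decomposition $X\cong\sum_{i=1}^n{S_i}$, and composing the unique maps $S_i\to\1$ gives a morphism $X\to\sum_{i=1}^n{\1}$ to a decidable object $D$. These morphisms form a codirected inverse system (common refinements provide the transition maps); let $L=\varprojlim{D}$ in $\X$, which exists by completeness, with canonical comparison $u\colon X\to L$. The functor $\Spec$ preserves limits by Lemma~\ref{l:spec-pres-limits} and finite coproducts by the argument used for Claim~\ref{claim:spec-fin-coprod} (cf.\ the proof of Lemma~\ref{l:preserves-decidability}), so each $\Spec{D}$ is the corresponding finite discrete space and $\Spec{L}=\varprojlim{\Spec{D}}$. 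Under the identification of $\B(X)$ with the clopen algebra of $\Spec{X}$, the complemented partitions of $X$ correspond exactly to the finite clopen partitions of the Stone space $\Spec{X}$, and $\Spec{u}$ is precisely the canonical map from $\Spec{X}$ to the limit of its finite discrete quotients. As every Stone space is the codirected limit of such quotients, $\Spec{u}$ is a homeomorphism; conservativity of $\Spec$ then makes $u$ an isomorphism, so $X\cong L$ is pro-decidable. Combined with the equivalence $\prodec{\X}\simeq\BStone$, this shows that $\Spec\colon\X\to\BStone$ is an equivalence.

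The main obstacle I expect lies in the bookkeeping of the last paragraph: confirming that $X\cong\sum_i{S_i}$ genuinely holds for a complemented partition in the extensive category $\X$, and checking that the complemented partitions of $X$ match the finite clopen partitions of $\Spec{X}$ \emph{with the same transition maps}, so that the two codirected systems literally coincide under $\Spec$. Once these two pro-finite presentations are identified, the fact that Stone spaces are pro-finite together with conservativity of $\Spec$ finishes the argument with no further work.
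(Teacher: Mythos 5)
Your proposal is correct and follows essentially the same route as the paper: verify that $\BStone$ has the stated properties, observe that strong filtrality makes every $\Sub{X}\cong\F(\B(X))$ atomic so that Proposition \ref{p:prodecidable-stone} gives $\prodec{\X}\simeq\BStone$, and then use the homeomorphism $\Spec{X}\cong\Spec{\B(X)}$ together with conservativity of $\Spec$ to conclude that every object is pro-decidable. Your final paragraph merely spells out, via complemented partitions, the codirected diagram of decidable quotients that the paper's proof leaves implicit in the sentence ``$X$ is the codirected limit in $\X$ of decidable objects.''
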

\begin{proof}
First, note that $\BStone$ is a non-trivial positive and coherent category which is well-pointed and complete (cf.\ Example~\ref{ex:examples-pretoposes}). Moreover, every object of $\BStone$ is filtral (cf.\ Example~\ref{ex:KH-filtral}).

In the other direction, assume $\X$ is a category satisfying the hypotheses in the statement. For any object $X$ of $\X$, we have $\Sub{X}\cong\F(\B(X))$. Since $\F(\B(X))$ is an atomic lattice, so is $\Sub{X}$. Therefore, Proposition~\ref{p:prodecidable-stone} applies to show that the full subcategory of $\X$ on the pro-decidable objects is equivalent to $\BStone$. We claim that every object $X$ of $\X$ is pro-decidable, from which it will follow that $\X$ is equivalent to $\BStone$.
Since $\X$ is strongly filtral, Lemmas~\ref{l:charact-filtral-object} and~\ref{l:positive-has-points} imply that $\Spec{X}$ is a Stone space, whence the codirected limit in $\KH$ of finite discrete spaces.
The functor $\Spec\colon\X\to\KH$ preserves limits (Lemma~\ref{l:spec-pres-limits}) and is conservative (Lemma~\ref{l:Spec-is-conservative}), thus it reflects limits. Using Claim~\ref{claim:fullness-finite}, we conclude that $X$ is the codirected limit in $\X$ of decidable objects.
\end{proof}
\section{The exact completion of $\BStone$}\label{s:epilogue}
In this last section, we indicate how to exploit Theorem~\ref{t:main-characterisation-bis} to show that the exact completion of $\BStone$ coincides with $\KH$, a folklore result whose proof seems not to have appeared in print. 

If $\C$ is any regular category, the coequaliser of an equivalence relation $R\rightrightarrows X$ in $\C$ need not exist, in general. Even if it exists, call it $e\colon X\epi Y$, it may happen that $e$ ``identifies more points than those prescribed by $R$'' (that is, $R$ does not coincide with the kernel pair of $e$). This means, in a sense, that $\C$ does not have enough quotients to describe all its equivalence relations. The problem of adding the missing quotients to the category $\C$, i.e.\ of completing a regular category to an exact one, has a universal solution. Recall that a functor between regular categories is \emph{exact} if it preserves finite limits and regular epimorphisms. The solution then consists of an exact category $\C_{ex/reg}$---called the \emph{exact}, or \emph{ex/reg completion} of $\C$---along with a fully faithful exact functor 
\[J\colon \C\to\C_{ex/reg}\]
satisfying the following universal property: for every exact category $\D$, precomposition with $J$ yields an equivalence between the category of exact functors $\C\to \D$, and the category of exact functors $\C_{ex/reg}\to\D$. The category $\C_{ex/reg}$ exists and is unique up to equivalence. Its objects can be identified with the equivalence relations in $\C$, and the functor $J\colon \C\to\C_{ex/reg}$ sends an object $X$ in $\C$ to its diagonal, that is the identity relation on $X$.

The exact completion yields a left adjoint to the (full) forgetful 2-functor from the 2-category of exact categories and exact functors, to the 2-category of regular categories and exact functors. In a similar manner, one can define the \emph{pretopos completion} of a coherent category. It corresponds to a left adjoint to the (full) forgetful 2-functor from the 2-category of pretoposes and pretopos morphisms, to the 2-category of coherent categories and coherent functors (cf.\ item $1$ in Proposition~\ref{p:pretopos-morphisms}).
For more background on exact and pretopos completions, we refer the reader to~\cite{CV98} or~\cite[A3.3]{Elephant1} (where $\C_{ex/reg}$ is denoted by $\mathbf{Eff}(\C)$), and~\cite[pp.\ 255-271]{MR77}, respectively.
\begin{theorem}\label{t:exact-completion-stone}
The exact completion of the category $\BStone$ of Stone spaces and continuous maps is the category $\KH$ of compact Hausdorff spaces and continuous maps. Therefore, $\KH$ is also the pretopos completion of $\BStone$.
\end{theorem}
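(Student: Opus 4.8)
The plan is to exhibit a fully faithful exact functor $J\colon \BStone \to \KH$ and to verify that it satisfies the universal property of the exact completion directly. The obvious candidate for $J$ is the inclusion of Stone spaces into compact Hausdorff spaces. This is fully faithful by construction, and it is exact because finite limits and regular epimorphisms in both $\BStone$ and $\KH$ are computed as in $\Set$ (cf.\ the discussion of regularity in Example \ref{ex:coherent-categories}); a continuous surjection between Stone spaces remains a regular epimorphism in $\KH$. So the substance of the proof lies entirely in checking the universal property, namely that for every exact category $\D$, precomposition with $J$ induces an equivalence between exact functors $\KH \to \D$ and exact functors $\BStone \to \D$.

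First I would record the abstract characterisation of the exact completion that we can lean on. The ex/reg completion of a regular category $\C$ is, up to equivalence, the unique exact category $\C_{ex/reg}$ admitting a fully faithful exact functor $J\colon \C\to \C_{ex/reg}$ such that every object of $\C_{ex/reg}$ is a quotient of (the image under $J$ of) an object of $\C$ by an equivalence relation that itself lives in the image of $J$; equivalently, $J$ is full on subobjects, covers its codomain by objects of $\C$, and the objects of $\C$ are regular projective and form a generating family in $\C_{ex/reg}$. (This is the standard description, see \cite[A3.3]{Elephant1}.) Thus it suffices to verify that $\KH$, together with the inclusion $J$, enjoys these three features relative to $\BStone$: every compact Hausdorff space is a regular-epimorphic image of a Stone space, the Stone spaces are regular projective in $\KH$, and every kernel-pair equivalence relation on a Stone space is again a Stone space.

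The first of these is exactly the content recalled in Example \ref{ex:KH-filtral}: every compact Hausdorff space is a continuous image of a Stone space, e.g.\ of the Stone--{\v C}ech compactification $\beta(|Y|)$ of its underlying set with the discrete topology, and this map is a regular epimorphism in $\KH$. For regular projectivity, I would use that the Stone--{\v C}ech compactifications $\beta(I)$ of discrete spaces are precisely the free compact Hausdorff spaces (the copowers $\sum_{I}{\1}$ of Theorem \ref{t:main-characterisation-bis}), hence projective with respect to regular epis by the adjunction $\sum_{-}{\1}\dashv \pt$; an arbitrary Stone space is a retract of such a $\beta(I)$, and retracts of projectives are projective. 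The remaining point is that a closed equivalence relation $R\rightrightarrows X$ on a Stone space $X$, being a closed subspace of $X\times X$ and hence itself a Stone space, lies in the image of $J$.

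The step I expect to be the main obstacle is ensuring that no object of $\KH$ is missed and that the correspondence of exact functors is genuinely an equivalence rather than merely full and faithful. Concretely, one must check essential surjectivity: given an exact functor $F\colon \BStone\to\D$, the induced exact extension $\widehat{F}\colon \KH\to \D$ must be constructed by sending a compact Hausdorff space $Y$ --- presented as the quotient $\beta(|Y|)/R_Y$ of a Stone space by the kernel pair $R_Y$ of the canonical surjection $\beta(|Y|)\epi Y$ --- to the coequaliser in $\D$ of $F(R_Y)\rightrightarrows F(\beta(|Y|))$, and one must verify that this assignment is well-defined on morphisms, exact, and independent (up to canonical isomorphism) of the chosen presentation. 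Rather than carrying out this construction by hand, I would instead invoke Theorem \ref{t:main-characterisation-bis} to shortcut the argument: since $\KH$ is a pretopos, it is in particular exact, so the universal exact functor $\BStone \to \BStone_{ex/reg}$ factors through $J\colon \BStone\to \KH$ via a canonical exact comparison functor $K\colon \BStone_{ex/reg}\to \KH$. It then remains to show $K$ is an equivalence, which follows because $K$ is conservative, full on subobjects, and covers $\KH$ (by the three features above, transported along $K$), so that Proposition \ref{p:pretopos-morphisms}(2) applies once we observe $\BStone_{ex/reg}$ is itself a pretopos. The final sentence of the theorem is then immediate: the pretopos completion of a coherent category agrees with the exact completion whenever the latter is already positive, and $\KH$ is positive, so $\KH$ is simultaneously the exact and the pretopos completion of $\BStone$.
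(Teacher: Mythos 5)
There is a genuine error at the core of your direct verification: the characterisation of the ex/reg completion you state is essentially the one for the ex/\emph{lex} completion, not the ex/reg completion. Regular projectivity of the objects of $\BStone$ in the completion is \emph{not} required for $\BStone_{ex/reg}$, and your claim that every Stone space is regular projective in $\KH$ (being a retract of some $\beta(I)$) is false: by Gleason's theorem the regular projectives in $\KH$ are exactly the extremally disconnected compact Hausdorff spaces, and retracts of the spaces $\beta(I)$ are extremally disconnected, so for instance the Cantor space is a Stone space that is neither a retract of any $\beta(I)$ nor projective in $\KH$. The correct characterisation of the ex/reg completion (cf.\ \cite[A3.3]{Elephant1}) asks only that $J$ be full, faithful and exact, that every object of the codomain be covered by an object coming from $\BStone$, and that every subobject in $\KH$ of a Stone space again come from $\BStone$; your features (a) and (c) are precisely these two conditions, so the direct route can be repaired by deleting the projectivity requirement rather than trying to prove it.

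Your fallback via Theorem \ref{t:main-characterisation-bis} and Proposition \ref{p:pretopos-morphisms} is closer in spirit to what is needed, but as written it is also incomplete: you still have to prove that $\BStone_{ex/reg}$ is extensive (hence a pretopos), that the comparison functor $K$ is coherent (so that Proposition \ref{p:pretopos-morphisms} is applicable), and, crucially, that $K$ is conservative and full on subobjects --- none of this follows by ``transporting the three features'', one of which is false. The paper's proof avoids the comparison functor entirely: it shows that the abstract completion $\X=\BStone_{ex/reg}$ is itself a non-trivial, well-pointed, filtral pretopos admitting all set-indexed copowers of $\1$ (extensivity is quoted from the literature on exact completions; well-pointedness holds because $J$ preserves copowers of the one-point space and covers $\X$, so every object of $\X$ is covered by a copower of $\1$; filtrality holds because $\Sub{J(X)}\cong\Sub{X}$ for every Stone space $X$), and then the uniqueness clause of Theorem \ref{t:main-characterisation-bis} yields $\X\simeq\KH$ without ever analysing a comparison functor. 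You should either adopt that route or supply the missing verifications for $K$.
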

\begin{proof}
Let $\X$ denote the exact completion of $\BStone$, and $J\colon \BStone\to \X$ the full and faithful exact functor satisfying the aforementioned universal property. Because $\BStone$ is extensive, so is $\X$ (cf.~\cite[Lemma 2.2]{Carboni95}). Thus, $\X$ is a non-trivial pretopos. To conclude that $\X$ is equivalent to $\KH$, by Theorem~\ref{t:main-characterisation-bis}, it suffices to show that $\X$ (i)~admits all set-indexed copowers of its terminal object, (ii)~is well-pointed and (iii)~filtral. 
\begin{enumerate}[wide, labelwidth=!, labelindent=10pt]\renewcommand{\labelenumi}{(\roman{enumi})}
\item Since $J$ preserves finite limits, it sends the one-point space $\1$ to the terminal object of $\X$. The latter can be represented as the unique equivalence relation on $\1$, i.e.\ the identity relation $\1\rightrightarrows \1$. A straightforward computation shows that, for any set $I$, the $I$-fold copower of the terminal object in $\X$ coincides with the identity relation on $\sum_{I}{\1}$. Hence, $J$ preserves copowers of the one-point space. In particular, $\X$ admits all set-indexed copowers of the terminal object.
\item Observe that the functor $J$ covers its codomain, i.e.\ for every $Y$ in $\X$ there is a Stone space $X$ and a regular epimorphism $J(X)\epi Y$ (an equivalence relation on an object is covered by the identity relation on the same object).  In turn, $X$ is covered by a copower of the one-point space in $\BStone$. Since $J$ preserves regular epimorphisms and copowers of the one-point space (cf.\ the previous item), $Y$ is covered in $\X$ by a copower of the terminal object. Therefore, $\X$ is well-pointed. 
\item Note that $\Sub{J(X)}\cong \Sub{X}$ for every Stone space $X$ (see, e.g.,~\cite[Lemma 25.21]{McLarty92}, where $\C_{ex/reg}$ is denoted $\mathbf{Map}(\C)$). Thus, by Lemmas~\ref{l:charact-filtral-object} and~\ref{l:positive-has-points}, $J(X)$ is filtral in $\X$. Because $J$ covers its codomain, the category $\X$ is filtral.
\end{enumerate}
This shows that $\KH$ is the exact completion of $\BStone$. 
It is not difficult to see that the functor $J\colon \BStone\to\X$ preserves finite coproducts, whence it is coherent (cf.~\cite[Corollary 3.3.10]{Elephant1}). Since every coherent functor between pretoposes is a pretopos morphism, a fact mentioned in Proposition~\ref{p:pretopos-morphisms}, it follows that $\KH$ is also the pretopos completion of $\BStone$.
\end{proof}

\section*{Acknowledgements}\label{s:acknowledgements}
The second author is grateful to Clemens Berger for several helpful comments on an early draft of this work, and to the participants of the workshop \emph{Logique cat{\'e}gorique, topos et dualit{\'e}s}, held in Nice in January 2018, for their valuable feedback. Further, he is greatly indebted to Mamuka Jibladze, who commented on a preliminary version of the paper and raised the question of how it relates to the folklore result identifying $\KH$ with the exact completion of $\BStone$. Finally, we are grateful to the anonymous referee for their comments on our manuscript.

\refs

\bibitem [Ad\'amek, Herrlich \& Strecker, 1990]{Joy1990}
J.~Ad\'amek, H.~Herrlich, and G.~E.~Strecker, \emph{Abstract and concrete
  categories}, Pure and Applied Mathematics, John Wiley \& Sons, Inc., New
  York, 1990.

\bibitem [Banaschewski, 1984]{Banaschewski1984}
B.~Banaschewski, \emph{More on compact {H}ausdorff spaces and finitary
  duality}, Canad. J. Math. \textbf{36} (1984), no.~6, 1113--1118.
  
\bibitem[Barr, Grillet \& van Osdol, 1971]{BGvO71}
M.~Barr, P.~A.~Grillet, and D.~van Osdol, \emph{Exact categories and categories of sheaves}, Lecture Notes in Mathematics, vol. 236, Springer-Verlag, Berlin, 1971.

\bibitem[Borceux, 1994]{Borceux2}
F.~Borceux, \emph{Handbook of categorical algebra 2, categories and
  structures}, Encyclopedia of mathematics and its applications, Cambridge
  University Press, Cambridge, New York, 1994.

\bibitem[Borceux \& Bourn, 2004]{BB2004}
F.~Borceux and D.~Bourn, \emph{Mal'cev, protomodular, homological and
  semi-abelian categories}, Mathematics and its Applications, vol. 566, Kluwer
  Academic Publishers, Dordrecht, 2004.

\bibitem[Carboni, 1995]{Carboni95}
A.~Carboni, \emph{Some free constructions in realizability and proof theory},
  J. Pure Appl. Algebra \textbf{103} (1995), no.~2, 117--148.

\bibitem[Carboni \& Janelidze, 1996]{CJ96}
A.~Carboni and G.~Janelidze, \emph{Decidable (= separable) objects and
  morphisms in lextensive categories}, J. Pure Appl. Algebra \textbf{110}
  (1996), no.~3, 219--240.

\bibitem[Carboni, Lack \& Walters, 1993]{CLW93}
A.~Carboni, S.~Lack, and R.~F.~C.~Walters, \emph{Introduction to extensive and
  distributive categories}, J. Pure Appl. Algebra \textbf{84} (1993), no.~2,
  145--158.
  
  \bibitem[Carboni \& Vitale, 1998]{CV98}
A.~Carboni and E.~M.~Vitale, \emph{Regular and exact completions}, J. Pure Appl. Algebra \textbf{125}
  (1998), no.~1-3, 79--116.

\bibitem[Duskin, 1969]{Duskin1969}
J.~Duskin, \emph{Variations on {B}eck's tripleability criterion}, Reports of
  the {M}idwest {C}ategory {S}eminar, {III} (S.~Mac~Lane, ed.), Springer,
  Berlin, 1969, pp.~74--129.

\bibitem[Franklin \& Thomas, 1970]{FT1970}
S.~P.~Franklin and B.~V.~S.~Thomas, \emph{A categorical characterization of
  {CH}}, Carnegie-Mellon Univ. Math. Dept. Research Report 70-33 (1970).

\bibitem[Gelfand \& Naimark, 1943]{GN1943}
I.~Gelfand and M.~Naimark, \emph{On the imbedding of normed rings into the ring
  of operators in {H}ilbert space}, Rec. Math. [Mat. Sbornik] N.S.
  \textbf{12(54)} (1943), no.~2, 197--213.

\bibitem[Herrlich \& Strecker, 1971]{HS1971}
H.~Herrlich and G.~E.~Strecker, \emph{Algebra {$\bigcap $}
  topology=compactness}, General Topology and Appl. \textbf{1} (1971),
  283--287.

\bibitem[Isbell, 1982]{Isbell1982}
J.~Isbell, \emph{Generating the algebraic theory of {$C(X)$}}, Algebra
  Universalis \textbf{15} (1982), no.~2, 153--155.

\bibitem[Johnstone, 1986]{Johnstone1986}
P.~T.~Johnstone, \emph{Stone spaces}, Cambridge Studies in Advanced
  Mathematics, vol.~3, Cambridge University Press, 1986, Reprint of the 1982
  edition.

\bibitem[Johnstone, 2002]{Elephant1}
P.~T.~Johnstone, \emph{Sketches of an elephant: a topos theory compendium. {V}ol. 1},
  Oxford Logic Guides, vol.~43, The Clarendon Press, Oxford University Press,
  New York, 2002.

\bibitem[Kakutani, 1941]{Kakutani41}
S.~Kakutani, \emph{Concrete representation of abstract {$(M)$}-spaces. ({A}
  characterization of the space of continuous functions.)}, Ann. of Math.
  \textbf{42} (1941), no.~4, 994--1024.

\bibitem[Krein \& Krein, 1940]{KreinKrein40}
M.~Krein and S.~Krein, \emph{On an inner characteristic of the set of all
  continuous functions defined on a bicompact {H}ausdorff space}, C. R.
  (Doklady) Acad. Sci. URSS (N.S.) \textbf{27} (1940), 427--430.

\bibitem[Lawvere, 1964]{Lawvere1964}
F.~W.~Lawvere, \emph{An elementary theory of the category of sets}, Proc. Nat.
  Acad. Sci. U.S.A. \textbf{52} (1964), 1506--1511.

\bibitem[Lawvere, 2005]{Lawvere2005}
F.~W.~Lawvere, \emph{An elementary theory of the category of sets (long version) with
  commentary}, Repr. Theory Appl. Categ. (2005), no.~11, 1--35, With comments
  by the author and Colin McLarty.

\bibitem[Lawvere, 2006]{Lawvere2006}
F.~W.~Lawvere, \emph{Adjointness in foundations}, Repr. Theory Appl. Categ. (2006),
  no.~16, 1--16, Reprinted from Dialectica {{\bf{23}}} (1969).

\bibitem[Linton, 1966]{Linton66}
F.~E.~J.~Linton, \emph{Some aspects of equational categories}, Proc. {C}onf.
  {C}ategorical {A}lgebra ({L}a {J}olla, {C}alif., 1965) (S.~Eilenberg, D.~K.
  Harrison, H.~R{\"o}hrl, and S.~Mac~Lane, eds.), Springer, New York, 1966,
  pp.~84--94.

\bibitem[Makkai, 1985]{Makkai85}
M.~Makkai, \emph{Ultraproducts and categorical logic}, Methods in mathematical
  logic ({C}aracas, 1983) (C.~A. Di~Prisco, ed.), Lecture Notes in Math., vol.
  1130, Springer, Berlin, 1985, pp.~222--309.

\bibitem[Makkai \& Reyes, 1977]{MR77}
M.~Makkai and G.~E.~Reyes, \emph{First order categorical logic}, Lecture Notes
  in Mathematics, vol. 611, Springer-Verlag, Berlin-New York, 1977.

\bibitem[Manes, 1976]{Manes1976}
E.~G.~Manes, \emph{Algebraic theories}, Springer-Verlag, New York-Heidelberg,
  1976, Graduate Texts in Mathematics, No. 26.

\bibitem[Marra \& Reggio, 2017]{MR2017}
V.~Marra and L.~Reggio, \emph{Stone duality above dimension zero: axiomatising
  the algebraic theory of {${\rm C}(X)$}}, Adv. Math. \textbf{307} (2017),
  253--287.

\bibitem[McLarty, 1992]{McLarty92}
C.~McLarty, \emph{Elementary categories, elementary toposes}, Oxford Logic
  Guides, vol.~21, The Clarendon Press, Oxford University Press, New York,
  1992.

\bibitem[Pedicchio \& Tholen, 2004]{PT2004}
M.~C.~Pedicchio and W.~Tholen (eds.), \emph{Categorical foundations},
  Encyclopedia of Mathematics and its Applications, vol.~97, Cambridge
  University Press, Cambridge, 2004, Special topics in order, topology,
  algebra, and sheaf theory.

\bibitem[Richter, 1991]{Richter1991}
G.~Richter, \emph{Axiomatizing the category of compact {H}ausdorff spaces},
  Category theory at work ({B}remen, 1990) (H.~Herrlich and H.-E. Porst, eds.),
  Res. Exp. Math., vol.~18, Heldermann, Berlin, 1991, pp.~199--215.

\bibitem[Richter, 1992]{Richter1992}
G.~Richter, \emph{Axiomatizing algebraically behaved categories of {H}ausdorff
  spaces}, Category theory 1991 ({M}ontreal, {PQ}, 1991) (R.~A.~G. Seely, ed.),
  CMS Conf. Proc., vol.~13, Amer. Math. Soc., Providence, RI, 1992,
  pp.~367--389.

\bibitem[Richter, 1996]{Richter1996}
G.~Richter, \emph{An elementary approach to ``algebra $\cap$ topology $=$
  compactness''}, Appl. Categ. Structures \textbf{4} (1996), no.~4, 443--446.

\bibitem[Rosick\'y, 1989]{Rosicky1989}
J.~Rosick\'y, \emph{Elementary categories}, Arch. Math. (Basel) \textbf{52}
  (1989), no.~3, 284--288.

\bibitem[Schlomiuk, 1970]{Schlomiuk1970}
D.~I.~Schlomiuk, \emph{An elementary theory of the category of topological
  spaces}, Trans. Amer. Math. Soc. \textbf{149} (1970), 259--278.

\bibitem[Stone, 1936]{Stone1936}
M.~H.~Stone, \emph{The theory of representations for {B}oolean algebras},
  Trans. Amer. Math. Soc. \textbf{40} (1936), no.~1, 37--111.

\bibitem[Thron, 1962]{Thron1962}
W.~J.~Thron, \emph{Lattice-equivalence of topological spaces}, Duke Math. J.
  \textbf{29} (1962), no.~4, 671--679.

\bibitem[Vitale, 1994]{Vitale1994}
E.~M.~Vitale, \emph{On the characterization of monadic categories over
  {$SET$}}, Cahiers Topologie G\'{e}om. Diff\'{e}rentielle Cat\'{e}g.
  \textbf{35} (1994), no.~4, 351--358.

\bibitem[Wattel, 1968]{Wattel1968}
E.~Wattel, \emph{The compactness operator in set theory and topology},
  Mathematical Centre Tracts, vol.~21, Mathematisch Centrum, Amsterdam, 1968.

\bibitem[Yosida, 1941]{Yosida41}
K.~Yosida, \emph{On vector lattice with a unit}, Proc. Imp. Acad. Tokyo
  \textbf{17} (1941), no.~5, 121--124.

\endrefs

\end{document}